\crefname{lem}{Lemma}{Lemmas}
\crefname{thm}{Theorem}{Theorems}
\crefname{obs}{Observation}{Observations}
\crefname{prop}{Proposition}{Propositions}
\crefname{conj}{Conjecture}{Conjectures}
\newtheorem{thm}{Theorem}
\newtheorem{lem}[thm]{Lemma}
\newtheorem{obs}[thm]{Observation}
\newtheorem{conj}[thm]{Conjecture}
\newtheorem{claim}{Claim}[section]
\newcommand{\mc}[1]{\mathcal{#1}}
\begin{document}
	
	\title{Every  graph with no $K_7^{\vee}$-minor is $6$-colorable}
	
	\author{
		Sergey Norin	
		and Agn\`es Totschnig}
	\address{Department of Mathematics and Statistics, McGill University, Montr\'{e}al, QC, Canada.}
	\thanks{SN was partially supported by NSERC Discovery Grant. AT was supported by a McCall MacBain Scholarship and partially by FRQNT Grant 332481.
	}
	
\begin{abstract} 
	Let  $K_7^{\vee}$ denote the graph obtained from the complete graph on seven vertices by deleting two edges with a common end. Motivated by Hadwiger's conjecture, we prove that every  graph with no $K_7^{\vee}$-minor is $6$-colorable.
\end{abstract}
	\maketitle
	\section{Introduction}
	
	In $1943$ Hadwiger made the following famous conjecture.
	
	\begin{conj}[Hadwiger's conjecture~\cite{Hadwiger1943}]\label{Hadwiger} For every integer $t \geq 1$, every graph with no $K_{t}$ minor is $(t-1)$-colorable. 
	\end{conj}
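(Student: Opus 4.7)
The plan is to proceed by induction on $t$, with the classical cases as base: $t \le 3$ is elementary, $t = 4$ is due to Hadwiger and Dirac, $t = 5$ was shown by Wagner to be equivalent to the four color theorem, and $t = 6$ was established by Robertson, Seymour and Thomas. For $t \ge 7$ I would assume for contradiction that $G$ is a vertex-minimum counterexample, that is, a graph with no $K_t$-minor that admits no proper $(t-1)$-coloring, with $|V(G)|$ as small as possible.

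First I would extract structural properties of $G$. Standard reductions show that $G$ is $(t-1)$-connected: any cutset $S$ with $|S| \le t-2$ would split $G$ into smaller parts that, together with $S$, are $(t-1)$-colorable by minimality, and the colorings can be permuted on the clique $S$ (which must in fact be a clique in $G$, by an additional argument) to combine into a $(t-1)$-coloring of $G$. Similarly, every vertex has degree at least $t-1$, and more refined arguments exclude small separators, low-degree vertices whose neighborhoods behave like near-cliques, and other "reducible" configurations. In particular $G$ cannot contain the graph $K_t^{\vee}$ obtained by deleting a matching of size two from $K_t$ on the neighborhood of any vertex.

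The next step is to bring the extremal and structural machinery of minor-closed classes to bear. Kostochka's and Thomason's bounds assert that every $K_t$-minor-free graph has average degree $O(t\sqrt{\log t})$, so $G$ has a vertex $v$ of degree at most $c t \sqrt{\log t}$; combined with minimum degree $\ge t-1$ this forces $G$ to be nearly extremal for the Kostochka--Thomason bound, a strong restriction. One could then invoke the Robertson--Seymour structure theorem: since $G$ has no $K_t$-minor, it decomposes as a clique-sum of pieces that almost embed in a surface of bounded genus, with bounded apex sets and vortices. Coloring near-embedded pieces using surface methods and combining along clique-sums of size $<t-1$ would then yield the required $(t-1)$-coloring, contradicting the choice of $G$.

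The main obstacle, and indeed the reason the conjecture has remained open since $1943$, is closing the $\sqrt{\log t}$ gap between the known density bound and the linear bound required, and simultaneously controlling the apex sets and vortices in the structure theorem uniformly in $t$. Shaving the $\sqrt{\log t}$ factor in the Kostochka--Thomason bound is itself a celebrated open problem; the recent progress of Delcourt--Postle and Norin--Postle--Song only reduces the coloring number to $O(t (\log \log t)^6)$. Any viable attack along the lines above must either push the density bound down to essentially linear while preserving information about the extremal configurations, or strengthen the structure theorem to produce apex-free pieces on which $(t-1)$-colorings can be constructed directly. Without such an advance — which the main result of this paper, treating only the very specific relaxed minor $K_7^{\vee}$ at $t=7$, is far from delivering — the step from $t-1 \le \delta(G) \le c t \sqrt{\log t}$ to a contradiction cannot be closed, and the plan remains at the level of strategy rather than proof.
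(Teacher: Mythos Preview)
The statement you were asked to prove is \emph{Hadwiger's conjecture}, which the paper states as Conjecture~\ref{Hadwiger} and does not prove; it is one of the most famous open problems in graph theory, open for all $t \ge 7$. There is therefore no ``paper's own proof'' to compare against, and your proposal is not a proof either --- as you yourself concede in the final paragraph, the argument cannot be closed and ``remains at the level of strategy rather than proof.'' That self-assessment is accurate.

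A few factual corrections to your discussion are worth noting. First, you mis-define $K_t^{\vee}$: in the paper (and standard usage) $K_t^{\vee}$ is the graph obtained from $K_t$ by deleting two edges \emph{sharing a common end}, whereas deleting a two-edge matching yields $K_t^{=}$. Second, the claim that a minimum cutset $S$ in a minimum counterexample ``must in fact be a clique'' is not justified by standard arguments; what one does get is that $G$ is $(t-1)$-edge-connected and has minimum degree at least $t-1$, and Mader's theorem gives $7$-connectivity for $t \ge 7$, but the full $(t-1)$-connectivity you assert is not known in general. Third, the structure-theorem approach you sketch is known to be insufficient as stated: the apex sets in the Robertson--Seymour decomposition can have size depending on $t$ in a way that blocks the combining step, and no one has shown how to color the near-embedded pieces with only $t-1$ colors when apices and vortices are present.

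In short: there is no gap to name beyond the one you already named --- the conjecture is open, and your outline correctly identifies where and why every known line of attack stalls.
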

	
	We refer the reader to a comprehensive survey by Seymour~\cite{Seymour2016} of progress towards the conjecture, and only give a brief overview of the results closest to our investigation. 
	
	Hadwiger~\cite{Hadwiger1943} and Dirac~\cite{Dirac1952} independently showed that Conjecture~\ref{Hadwiger} holds for $t \leq 4$. Wagner~\cite{Wagner1937} proved that for $t=5$  Hadwiger's  conjecture is equivalent to the Four Color Theorem, which was subsequently proved by Appel and Haken~\cite{AH1977,AHK1977}. 
	Robertson, Seymour and  Thomas~\cite{RST1993} proved Hadwiger's conjecture for $t=6$, also by reducing it to the Four Color Theorem. 
	
	In this paper we are concerned with the next open case: $t=7$. 
	Settling the conjecture in this case appears to be extremely challenging. Albar and Gon\c{c}alves~\cite{AG2018} proved that every graph with no $K_7$ minor is $8$-colorable, but even the question whether every such graph is $7$-colorable is open. (Rolek, Song and  Thomas discuss properties of minor minimal non-$7$-colorable graphs with no $K_7$-minor in \cite{RST2023}.)
	
	In another direction, several authors explored minors that are guaranteed to be present in every non-$6$-colorable graph. The following two theorems of Jakobsen~\cite{Jakobsen1971} and of Kawarabayashi and Toft~\cite{KT2005}, in particular, inspired our results. 
	
	Stating Jakobsen's result requires the following additional notation. For $t \geq 4$, let $K^{\vee}_t$ denote the graph obtained from $K_t$ by deleting two edges with a common end, and let $K^{=}_t$  denote the graph obtained from $K_t$ by deleting a two-edge matching. Thus every graph obtained from $K_t$ by deleting two edges is isomorphic to $K^{\vee}_t$ or $K^{=}_t$.
	
		\begin{thm}[Jakobsen{~\cite{Jakobsen1971}}]\label{t:Jakobsen}
		Every  graph with no $K_7^{\vee}$-minor and no $K^{=}_7$-minor is $6$-colorable.
	\end{thm}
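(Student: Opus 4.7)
The plan is a minimum-counterexample argument. Suppose $G$ is a counterexample with the fewest vertices; then $G$ is $7$-chromatic and color-critical, so $\delta(G) \geq 6$, and by the Robertson-Seymour-Thomas resolution of Hadwiger's conjecture for $t = 6$, $G$ contains a $K_6$-minor. Fix one with branch sets $V_1, \dots, V_6$.

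The structural core of the argument is: every vertex $v \notin \bigcup_i V_i$ has neighbors in at most three of the $V_i$. Indeed, if $v$ touched four, say $V_1, \dots, V_4$, then $(\{v\}, V_1, \dots, V_6)$ would realize $K_7$ minus the two edges $vV_5$ and $vV_6$, which share the vertex $v$, hence a $K_7^{\vee}$-minor; touching five or six branch sets yields a $K_7^-$-minor, which contains both $K_7^{\vee}$ and $K_7^{=}$ as subgraphs. Analogous exchanges---for instance, replacing a branch set by a contracted path through an outside vertex $u$, or combining two adjacent outside vertices over a common $K_5$-substructure so that the two "missing" edges become non-adjacent---convert a variety of local configurations into the forbidden minors, ruling them out as well.

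I would then choose the $K_6$-minor to minimize $\sum_i |V_i|$, which forces each $V_i$ to be a tree with no interior vertex removable without destroying the minor, and rules out outside vertices absorbable into a branch set while preserving it. Writing $W = V(G) \setminus \bigcup_i V_i$ and $L(w) = \{1, \dots, 6\} \setminus \{i : w\text{ has a neighbor in }V_i\}$, the "at most three" observation gives $|L(w)| \geq 3$, and the task reduces to properly list-coloring $G[W]$ consistent with assigning color $i$ to $V_i$. To make progress despite lists of size only three, I would exploit (i) the $6!$ permutations of color labels across the branch sets, (ii) absorption of outside vertices whenever this preserves the minor and strictly decreases the measure $\sum |V_i|$, and (iii) iteration of the minor choice after each partial coloring is assembled.

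The main obstacle is bridging "$\geq 3$ colors available per outside vertex" and the existence of a proper $6$-coloring of $G$ as a whole, because list-coloring from $3$-lists generally fails. The way forward is to exploit the extra flexibility that the $K_7^{\vee}$- and $K_7^{=}$-free hypotheses give simultaneously: $K_7^{\vee}$-freeness forbids a single outside vertex with four branch-set neighbors, while $K_7^{=}$-freeness forbids certain pairs of outside vertices with complementary patterns of branch-set neighbors, and these together should rule out the rigid configurations (a $4$-clique inside $W$ with coordinated lists, or closely coupled "double-forbidden" patterns) in which a $3$-list coloring could genuinely fail. The bulk of the technical work, and where I expect the real difficulty of Jakobsen's proof to lie, is the systematic case analysis of these residual configurations and the verification that each can be defeated by a suitable re-routing of the $K_6$-minor.
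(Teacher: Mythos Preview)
The paper does not give its own proof of Jakobsen's theorem; it is cited as a known result. However, the paper proves the strictly stronger \cref{t:main} (every $K_7^{\vee}$-minor-free graph is $6$-colorable), and states explicitly that Jakobsen's original argument follows the same overall strategy used there: restrict to a $7$-contraction-critical graph $G$, establish an extremal edge bound of the form $|E(G)| \leq 4|V(G)| - c$, deduce that $G$ has many vertices of degree seven, show that the neighborhood of each such vertex contains a $5$-clique, and then link these cliques to produce the forbidden minor. This is not the route you take.

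Your proposal has a genuine gap at its core. You want to fix a $K_6$-model with branch sets $V_1,\dots,V_6$, ``assign color $i$ to $V_i$,'' and then list-color $W = V(G)\setminus\bigcup_i V_i$ from the lists $L(w)$. But each branch set $V_i$ is a connected subgraph, not a single vertex, and whenever $|V_i|\geq 2$ the set $V_i$ contains an edge of $G$, so giving all of $V_i$ the same color is already an improper coloring of $G$. Minimizing $\sum_i |V_i|$ does not force the $V_i$ to be singletons; it only makes them trees. Your reduction therefore does not produce a coloring of $G$ at all. Even if one repaired this step (say by seeking a $K_6$ \emph{subgraph}, which a $7$-contraction-critical graph need not contain), your own acknowledgment that $3$-lists are generically insufficient is correct, and the closing paragraph about re-routing the model and exploiting $K_7^{=}$-freeness to rule out ``rigid configurations'' is a hope that some case analysis will close, not a proof. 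Jakobsen's actual argument, and the paper's, bypass this entirely by working through edge density and the clique structure of degree-seven neighborhoods in a contraction-critical graph.
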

	
	\begin{thm}[Kawarabayashi and Toft{~\cite{KT2005}}]\label{t:KTmain}
		Every  graph with no $K_7$-minor and no $K_{4,4}$-minor is $6$-colorable.
	\end{thm}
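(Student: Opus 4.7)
The plan is a proof by contradiction. Let $G$ be a vertex-minimum counterexample: $G$ is not $6$-colorable, excludes both $K_7$ and $K_{4,4}$ as minors, and every proper subgraph of $G$ is $6$-colorable. Standard critical-graph arguments immediately give $\delta(G) \ge 6$ (any vertex of degree $\le 5$ could be coloured after $6$-colouring $G - v$), and one then shows $G$ is $6$-connected: otherwise a cut of size $\le 5$ would split $G$ into smaller subgraphs each $6$-colorable by minimality, whose colourings could be amalgamated across the cut after a permutation.

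Next I would invoke Mader's linear edge bound for $K_7$-minor-free graphs to force a vertex $v$ of low degree, and reduce to the case $d(v) = 6$. Writing $N(v) = \{u_1, \dots, u_6\}$, a standard Kempe-chain argument applied to any $6$-coloring $c$ of $G - v$ shows that, after relabelling, $c(u_i) = i$, and that for every pair $\{i,j\}$ the vertices $u_i$ and $u_j$ lie in the same $(i,j)$-Kempe component of $G - v$; otherwise a Kempe swap would free a color for $v$. This yields $\binom{6}{2}$ color-alternating paths in $G - v$ joining the neighbours of $v$ pairwise, a rich source of ``connectors'' for assembling a $K_7$-minor with branch sets indexed by $\{v\} \cup N(v)$.

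If $G[N(v)]$ is complete, then $G[\{v\} \cup N(v)] \cong K_7$, contradicting the $K_7$-minor-free hypothesis. Otherwise some edge, say $u_1u_2$, is missing, and $6$-connectivity of $G$ implies that $G - \{v, u_3, \dots, u_6\}$ is connected, hence contains a $u_1$--$u_2$ path $P$; merging $\{u_1\} \cup V(P) \cup \{u_2\}$ with the singleton branch sets $\{v\}, \{u_3\}, \dots, \{u_6\}$ already yields a $K_7$-minor—unless further non-edges of $G[N(v)]$ also need to be bridged through pairwise vertex-disjoint subgraphs of $G - N[v]$. The argument therefore turns on a linkage problem for the multiple non-adjacent pairs in $N(v)$.

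The main obstacle, and the only point where the no-$K_{4,4}$-minor hypothesis enters, is showing that these pairwise vertex-disjoint linkages in $G - N[v]$ can always be found---or else a $K_{4,4}$-minor is exhibited. Failure to route disjoint bridges for all the missing edges of $G[N(v)]$ should force a dense bipartite structure, with the non-adjacent pairs on one side and a common set of rerouting vertices on the other, which together with the Kempe-path connectivity produces the forbidden $K_{4,4}$-minor. Completing the argument rigorously requires a careful case analysis on the number and pattern (matching, triangle, triangle-free, star-like) of the missing edges in $G[N(v)]$, and auxiliary use of the full Kempe-chain data to reroute paths whenever a naive bridging fails.
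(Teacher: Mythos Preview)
The paper does not itself prove this theorem; it is cited as a result of Kawarabayashi and Toft. However, the paper describes the KT2005 strategy and follows it closely in Section~\ref{s:main}, so one can compare your proposal against that.

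Your plan has a genuine gap concerning where the no-$K_{4,4}$-minor hypothesis enters. You invoke Mader's extremal bound for $K_7$-minor-free graphs, but that bound is $|E(G)| \le 5|V(G)| - 15$, which only forces a vertex of degree at most $9$; there is no way to ``reduce to the case $d(v) = 6$'' from it. In the actual argument the $K_{4,4}$-hypothesis is used precisely here, via J{\o}rgensen's theorem (Theorem~\ref{t:Jorgensen}): a $4$-connected graph with no $K_{4,4}$-minor and not isomorphic to $K_7$ satisfies $|E(G)| \le 4|V(G)| - 8$. Moreover, one takes a \emph{minor}-minimal (hence $7$-contraction-critical) counterexample, so Mader's connectivity theorem (Theorem~\ref{t:Mader}) gives $7$-connectivity and $\delta(G) \ge 7$; combined with J{\o}rgensen's bound this yields many vertices of degree exactly $7$. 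There is no degree-$6$ vertex in the picture at all, so your entire analysis of $N(v)$ with $|N(v)|=6$ is built on a case that never arises, and your claim that the $K_{4,4}$-hypothesis ``only enters'' in a final linkage step is incorrect.

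The endgame is also quite different from your Kempe-chain linkage sketch. Dirac's theorem (Theorem~\ref{t:Dirac}) forces the neighbourhood of each degree-$7$ vertex to have no independent set of size $3$, from which one extracts a $5$-clique through $v$ (with one exceptional configuration handled separately). The contradiction then comes from analysing the pairwise and triple intersections of these $5$-cliques across the many degree-$7$ vertices, using results such as Lemma~\ref{t:KT} and Theorem~\ref{t:KLNZ}, to force a $K_7$-minor. The idea of bridging non-edges of $G[N(v)]$ by disjoint paths and reading off a $K_{4,4}$ from a failed linkage is not the mechanism used.
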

	
	Our main theorem strengthens Jakobsen's result, showing that excluding $K^{=}_7$ is not necessary.
	 
	\begin{thm}\label{t:main}
		Every  graph with no $K_7^{\vee}$-minor is $6$-colorable.
	\end{thm}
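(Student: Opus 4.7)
The plan is to argue by contradiction. Let $G$ be a vertex-minimum counterexample: $G$ has no $K_7^\vee$-minor, is not $6$-colorable, and every proper subgraph of $G$ is $6$-colorable. Then $G$ is $7$-critical, so $\delta(G) \geq 6$, and standard Mader-type arguments on critical graphs should yield that $G$ is highly connected (ideally $7$-connected). Since $G$ is not $6$-colorable and has no $K_7^\vee$-minor, \Cref{t:Jakobsen} forces $G$ to contain a $K_7^=$-minor. Moreover, since $K_7^\vee$ is a subgraph of $K_7$, the graph $G$ also has no $K_7$-minor, so \Cref{t:KTmain} forces $G$ to contain a $K_{4,4}$-minor as well. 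The problem thus reduces to showing that no sufficiently connected graph can simultaneously carry a $K_7^=$-minor and a $K_{4,4}$-minor while avoiding a $K_7^\vee$-minor.

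Fix a $K_7^=$-minor $(V_1, \ldots, V_7)$ of $G$ with non-adjacent pairs $\{V_1, V_2\}$ and $\{V_3, V_4\}$, chosen to minimize $\sum_i |V_i|$. The key observation is that any $V_1$--$V_2$ path in $G \setminus (V_3 \cup V_4 \cup V_5 \cup V_6 \cup V_7)$ can be absorbed into $V_1$, upgrading the minor to a $K_7^-$-minor and therefore to a $K_7^\vee$-minor (by deleting one additional edge incident to an end of the missing edge), a contradiction. Hence $V_1$ and $V_2$ must lie in distinct components of $G \setminus (V_3 \cup \cdots \cup V_7)$, and symmetrically $V_3, V_4$ lie in distinct components of $G \setminus (V_1 \cup V_2 \cup V_5 \cup V_6 \cup V_7)$. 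The strategy is then to combine these two separation conditions with the connectivity of $G$ and the geometry of the $K_{4,4}$-minor in order to exhibit such a path---or, equivalently, to reroute entire branch sets so that a $K_7^\vee$-minor appears directly.

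The main obstacle is precisely this last step: simultaneously reconciling the $K_{4,4}$-minor, the rigid $K_7^=$ structure, and the two separation conditions inside a single highly connected host graph. This is likely to require an auxiliary rooted-minor or linkage-type lemma---asserting that in a sufficiently connected graph carrying a $K_7^=$-minor one can route the branch sets through a prescribed pair of vertices so as to ``close'' one of the non-edges---followed by a careful case analysis of how the branch sets of the $K_{4,4}$-minor can sit relative to $V_1, \ldots, V_7$. Establishing the appropriate connectivity level for $G$ (so that the rerouting arguments have enough room to operate) is a likely preliminary step in its own right.
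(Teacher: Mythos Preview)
Your proposal is not a proof but an outline whose central step is left open, and you yourself identify the gap: you need to show that a highly connected graph carrying both a $K_7^{=}$-minor and a $K_{4,4}$-minor must contain a $K_7^{\vee}$-minor, and you offer no argument for this. The difficulty is real. The separation you extract from the $K_7^{=}$-model---that $V_1$ and $V_2$ lie in different components of $G \setminus (V_3 \cup \cdots \cup V_7)$---does not conflict with any fixed connectivity bound, since $V_3 \cup \cdots \cup V_7$ can be arbitrarily large. The $K_{4,4}$-minor lives somewhere in $G$ with no a priori relation to the $K_7^{=}$ branch sets, so there is no evident mechanism for it to produce the path you need. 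Absent a concrete rerouting lemma (and none is known that would do this job), the plan does not close. A minor technical point: to invoke Mader's $7$-connectivity theorem you need $G$ to be $7$-\emph{contraction}-critical, so you should take $G$ minor-minimal rather than merely vertex-minimal with $6$-colorable proper subgraphs.

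The paper's route is entirely different and does not attempt to combine the Jakobsen and Kawarabayashi--Toft conclusions. Instead it proves a new extremal result (\cref{t:extremal}): every $4$-connected $G$ with $|E(G)| \ge 4|V(G)| - 8$ has a $K_7^{\vee}$-minor unless $G \simeq K_{2,2,2,2}$. Applied to a $7$-contraction-critical counterexample $G$, this bounds the edge count and forces at least eighteen vertices of degree seven. A local argument (Dirac's independence condition on neighborhoods plus a Kempe-chain lemma of Kriesell--Mohr) then shows each such vertex lies in a $5$-clique. The proof finishes by analysing the intersection pattern of the resulting family of $5$-cliques, using results of Kawarabayashi--Luo--Niu--Zhang and Kawarabayashi--Toft on nearly disjoint cliques in $7$-connected graphs, to force a $K_7$-minor. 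The engine is the density bound \cref{t:extremal}, not any interaction between pre-existing $K_7^{=}$ and $K_{4,4}$ models.
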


	The proof of \cref{t:main} uses the same overall strategy as the proofs of Theorems~\ref{t:Jakobsen} and~\ref{t:KTmain}, which is also deployed to some extent in the proofs of the Four Color Theorem and Hadwiger's conjecture for $t=6$. However,  we have to substantially extend the toolkit used in the proofs of these earlier results. We hope that our techniques could be further applied to closer approach Hadwiger's conjecture for small values of $t$. 
	
	First, we restrict our attention to \emph{$7$-contraction-critical} graphs, i.e. graphs $G$ such that $\chi(G)=7$ and  $\chi(G') < 7$ for every proper minor $G'$ of $G$.  Second,  we determine the maximum number of edges in $G$, as a function of the number of vertices. 
	In the proof of \cref{t:KTmain}, the corresponding part of the argument uses the following result of J{\o}rgensen.
	
	\begin{thm}[J{\o}rgensen~\cite{Jorgensen1994}]\label{t:Jorgensen}
		Every $4$-connected graph $G$ with $|E(G)| \geq 4|V(G)| - 7$ has a $K_{4,4}$-minor, unless $G \simeq K_7$.
	\end{thm}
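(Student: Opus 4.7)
The plan is to proceed by induction on $|V(G)|$ using the minimum-counterexample method. The base cases $|V(G)| \leq 8$ can be verified directly: when $|V(G)| = 7$ the density hypothesis forces $|E(G)| \geq 21 = \binom{7}{2}$, so $G \simeq K_7$, which is excluded; when $|V(G)| = 8$ the hypothesis requires at least $25$ edges, so $G$ is $K_8$ minus at most three edges and a $K_{4,4}$-minor is readily exhibited by choosing a balanced bipartition that avoids the missing edges. For the inductive step let $G$ be 4-connected with $|E(G)| \geq 4|V(G)| - 7$ and $G \not\simeq K_7$, and assume the result for all smaller graphs.

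The central tool is edge contraction: for $e = uv \in E(G)$ one has $|V(G/e)| = |V(G)| - 1$ and $|E(G/e)| = |E(G)| - 1 - |N(u) \cap N(v)|$, so if $|N(u) \cap N(v)| \leq 3$ then $G/e$ still satisfies the density hypothesis. If, moreover, $G/e$ is 4-connected and not isomorphic to $K_7$, induction supplies a $K_{4,4}$-minor of $G/e$, which lifts to a $K_{4,4}$-minor of $G$. Call such an $e$ a \emph{good} edge. The borderline subcase $G/e \simeq K_7$ must be treated by hand: there $G$ is obtained from $K_7$ by splitting one vertex into two adjacent vertices, and one checks directly that the resulting 8-vertex graph either equals an excluded small configuration or contains $K_{4,4}$ as a minor.

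So the heart of the argument is to prove that some good edge exists. An edge $uv$ fails to be good only if either (a) $|N(u) \cap N(v)| \geq 4$, or (b) $G/uv$ admits a separator of size at most three. In case (b) the 4-connectivity of $G$ forces the separator to contain the contracted vertex, so $G$ itself admits a 4-separator $\{u,v,x,y\}$. Assuming no good edge exists, every edge of $G$ lies in case (a) or case (b). In case (a), an edge $uv$ with four common neighbors already yields a $K_{2,5}$-subgraph on $\{u,v\} \cup (N(u) \cap N(v))$, and the 4-connectivity of $G - \{u,v\}$ supplies the four internally-disjoint paths needed to merge pairs of common neighbors and complete a $K_{4,4}$-minor. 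In case (b), the 4-separator analysis allows one to split $G$ along $\{u,v,x,y\}$ into two strictly smaller pieces; after completing each piece with a suitable set of virtual edges inside the separator, one argues that at least one side inherits the hypotheses of the theorem (4-connectivity and density), applies induction, and then realizes the promised minor in $G$ by routing the virtual edges through the opposite side.

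The principal obstacle is case (b): tracking the density condition through a 4-separator decomposition. The virtual edges added to each side must be few enough that 4-connectivity is retained, numerous enough that the density bound $|E| \geq 4|V| - 7$ is inherited by one side, and routable in the complementary side so that a minor on the inductively-treated side actually yields a minor of $G$. I expect this routing-versus-density bookkeeping, together with the extremal configurations in which every edge satisfies $|N(u) \cap N(v)| = 3$ and every contraction creates a coordinated 3-separator, to absorb essentially all of the technical work.
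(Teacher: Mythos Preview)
The paper does not prove this theorem; it is quoted from J{\o}rgensen's 1994 paper as background and motivation for the paper's own analogue, \cref{t:extremal}. So there is no proof in the present paper to compare your proposal against.

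That said, your inductive framework is the right shape, and it is indeed how J{\o}rgensen proceeds. But your case (a) contains a genuine gap. First, an edge $uv$ with $|N(u)\cap N(v)|\geq 4$ gives a $K_{2,4}$ (not $K_{2,5}$) on $\{u,v\}$ versus four common neighbours; second, $G-\{u,v\}$ is only guaranteed to be $2$-connected, not $4$-connected; and third, ``merging pairs of common neighbours'' via disjoint paths shrinks the $4$-side of the $K_{2,4}$ rather than enlarging the $2$-side, so it moves you away from $K_{4,4}$, not toward it. Local edge density of this kind does not by itself produce a $K_{4,4}$-minor, and in J{\o}rgensen's argument case (a) is not disposed of in one stroke.

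The missing ingredient is exactly what you flag as the obstacle in case (b): controlling what happens across a $4$-separation. J{\o}rgensen's solution is to prove auxiliary extremal results for \emph{rooted} minors --- in particular the rooted $K_{4,2}$-model bound quoted in this paper as \cref{l:K42} --- which let you apply induction to each side of a $4$-separation with the separator as the root set and then glue the two rooted minors into a $K_{4,4}$-model of $G$. Without such rooted lemmas, neither the density bookkeeping nor the routing of virtual edges in your case (b) can be made to close; and case (a) ultimately feeds into the same machinery rather than being handled by the ad hoc path argument you sketch. Your plan correctly locates the difficulty but does not yet contain the tool that resolves it.
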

	
	The main technical part in the proof of \cref{t:main} is establishing the following analogue of \cref{t:Jorgensen} in our setting. 		
	
	\begin{thm}\label{t:extremal}
		Every $4$-connected graph $G$ with $|E(G)| \geq 4|V(G)| - 8$ has a $K_7^{\vee}$-minor, unless $G \simeq K_{2,2,2,2}$.
	\end{thm}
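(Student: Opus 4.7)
The plan is to induct on $|V(G)|$, passing to a smaller graph by contracting a carefully chosen edge. Let $G$ be a minimum counterexample: a $4$-connected graph with $|E(G)| \geq 4|V(G)|-8$, no $K_7^\vee$-minor, and $G \not\simeq K_{2,2,2,2}$, of minimum order. A finite check handles graphs with at most $8$ vertices (where the edge bound forces $|E(G)| \geq 24 = |E(K_{2,2,2,2})|$), so we may assume $n := |V(G)| \geq 9$.

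The key step is to locate an edge $uv \in E(G)$ that is \emph{contractible} --- that is, $G/uv$ is $4$-connected, equivalently no $4$-cut of $G$ contains $\{u,v\}$ --- and that satisfies $|N_G(u) \cap N_G(v)| \leq 3$. Under these two conditions, contracting $uv$ removes one vertex and exactly $1 + |N_G(u)\cap N_G(v)|$ edges, so $|E(G/uv)| \geq 4(n-1)-8$, and the inductive hypothesis applies to $G/uv$. Either $G/uv$ has a $K_7^\vee$-minor, which immediately lifts to a $K_7^\vee$-minor of $G$ and contradicts our assumption, or $G/uv \simeq K_{2,2,2,2}$. In the latter case one has $n = 9$, $|E(G)| = 28$, $|N_G(u)\cap N_G(v)| = 3$, and the structure of $G$ is essentially determined: the partner of the contracted vertex in $K_{2,2,2,2}$ lifts to a vertex $x \in V(G)$ non-adjacent to $u$ and $v$ and adjacent to each of the remaining six vertices, which themselves induce an octahedron $K_{2,2,2}$. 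A short but delicate case analysis over the partition of these six vertices among $N_G(u)\setminus N_G(v)$, $N_G(v)\setminus N_G(u)$, and $N_G(u)\cap N_G(v)$ should then either yield an explicit $K_7^\vee$-model in $G$, or allow us to locate and contract a different edge of $G$ that sidesteps the $K_{2,2,2,2}$-obstruction, or else contradict the $4$-connectivity of $G$.

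The principal obstacle, and the heart of the paper's technical content, is guaranteeing the existence of such an edge $uv$ in the first place. Since $|E(G)| \geq 4n-8$ forces the existence of a vertex $w$ of degree at most $7$, it is natural to search among edges incident to $w$; but contractibility can fail globally (as in squares of even cycles), and the extra constraint $|N(u)\cap N(v)| \leq 3$ further narrows the options. When no such edge exists at $w$, the plan is to exploit the resulting rigidity: several $4$-cuts must intersect near $w$ and/or many edges incident to $w$ must have large common neighborhoods. Both situations yield dense local substructure, and one should then be able to extract a $K_7^\vee$-minor directly --- for instance by finding a $K_5$- or $K_6$-subdivision inside a large common neighborhood and attaching low-degree vertices as the seventh branch set --- in the spirit of J{\o}rgensen's original argument for \cref{t:Jorgensen}.
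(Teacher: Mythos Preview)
Your plan shares the paper's skeleton --- minimum counterexample, contract an edge, handle the $K_{2,2,2,2}$ obstruction --- and your treatment of the base case $n\le 8$ and of the event $G/uv\simeq K_{2,2,2,2}$ is on the right track (compare Claims~\ref{c:vertexnumber} and~\ref{c:K2222}). But the paragraph you label ``the principal obstacle'' is where essentially the entire proof lives, and your sketch there does not identify the ideas that make it go through.

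In a minimum counterexample the edge you seek \emph{never} exists: the paper proves that $G$ is in fact $5$-connected (Claims~\ref{c:no4sep2K4}--\ref{c:5connected}), so every contraction preserves $4$-connectivity, and then that $|N(u)\cap N(v)|\ge 4$ for \emph{every} edge (Claim~\ref{c:triangles}). Establishing $5$-connectivity already requires machinery you do not mention: the rooted-minor results of Section~\ref{s:rooted}, in particular a density bound for rooted $K_4$-models (\cref{l:K4}) and a genuine strengthening of J{\o}rgensen's rooted $K_{4,2}$ lemma to $K^*_{4,2}$-models (\cref{l:K42star}), are what control both sides of a putative $4$-separation in Claim~\ref{c:no4sep2K4}. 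Your proposal to finish by ``finding a $K_5$- or $K_6$-subdivision inside a large common neighbourhood and attaching low-degree vertices'' does not match what actually works. The endgame is a case split on the minimum degree $\delta(G)\in\{5,6,7\}$: the case $\delta=5$ is immediate, $\delta=7$ needs a five-way analysis of the complement of $G[N(v)]$ (Claim~\ref{c:degree6}), and the hardest case $\delta=6$ (Claims~\ref{c:nopaths}--\ref{c:no5separation} and the final step) invokes the two-disjoint-paths theorem (\cref{l:RST24}) applied to $G\setminus\{v,u_1,w_1\}$, ending in a planarity-versus-edge-count contradiction. None of this is visible from J{\o}rgensen's $K_{4,4}$ argument; the paper explicitly has to extend his toolkit, and that extension --- not the contraction step --- is the content of the theorem.
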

	
In \cref{s:rooted}, as a step towards the proof of \cref{t:extremal}, we obtain extremal results guaranteeing the existence of certain ``rooted'' minors, using and refining theorems of Robertson, Seymour and Thomas~\cite{RST1993} and J{\o}rgensen~\cite{Jorgensen1994}.  In \cref{s:extremal} we prove \cref{t:extremal}.

In \cref{s:main} we implement the remainder of the strategy and complete the proof of \cref{t:main}. The brief outline of the argument is as follows.  By \cref{t:extremal}, we can conclude that $G$ contains many degree seven vertices. The neighborhood of every degree seven vertex contains a clique of size five, except for one case, which we dismiss using a recent theorem of Kriesell and Mohr~\cite{KM2019}. The resulting cliques can than be connected to obtain the required minor. This last part closely follows the argument in~\cite{KT2005}, which is in turn inspired by~\cite{RST1993}.  We conclude with several open questions in~\cref{s:remarks}.

\subsection*{Notation} We use standard graph theoretical notation. We denote the subgraph of a graph $G$ induced by a set $X \subseteq V(G)$ by $G[X]$. 

A \emph{separation} of a graph $G$ is a pair $(A,B)$  such that $A \cup B = V(G)$ and $G[A] \cup G[B]= G$, i.e. no edge of $G$ has one end in $A-B$ and the other in $B-A$. The \emph{order} of a separation $(A,B)$ is $|A \cap B|$. A separation $(A,B)$ is \emph{non-trivial} if $A,B \neq V(G)$.  A \emph{$k$-separation} is a separation
of order $k$.

We write $G \simeq H$ if the graphs $G$ and $H$ are isomorphic. We denote by $N(v)$ the set of all vertices adjacent to the vertex $v$ in a graph $G$ (where $G$ is understood from context, whenever we use this notation). Let $N[v]= N(v) \cup \{v\}$.

We denote the graph obtained from the complete graph $K_t$ by deleting a single edge by $K^-_t$.
	
\section{Rooted models}\label{s:rooted}

Many of the results we use in this paper are best stated in terms of rooted models, which we now define. 

Let $k$ be a non-negative integer. A \emph{$k$-rooted graph} is a $(k+1)$-tuple $(G,v_1,\ldots,v_k)$ consisting of a graph $G$ and an ordered sequence of pairwise distinct vertices of $G$. Let $(H,u_1,\ldots,u_k)$ and $(G,v_1,\ldots,v_k)$ be two $k$-rooted graphs. 
A \emph{$(v_1,\ldots,v_k)$-rooted}  \emph{$(H,u_1,\ldots,u_k)$-model  in $G$} is a function $\alpha$ assigning to each $u \in V(H)$ a set $\alpha(u) \subseteq V(G)$  such that 
\begin{itemize}
\item $\alpha(u) \cap \alpha(u') = \emptyset$ for all $u,u' \in V(H)$,  $u \neq u'$,
	\item the subgraph $G[\alpha(u)]$, which we refer to as \emph{a bag of this model}, is non-null and connected for every $u \in V(H)$, 
	\item for every $uu' \in E(H)$ there exists an edge of $G$ with one end in $\alpha(u)$ and another in $\alpha(u')$, and
	\item $v_i \in \alpha(u_i)$ for every $i \in [k]$.
\end{itemize}
Note that contracting all the bags of the model to single vertices we obtain a minor of $G$ with a subgraph isomorphic to $H$ induced by the vertices resulting from contraction, and, in particular, an edge joining vertices corresponding to $v_i$ and $v_j$, whenever $u_i$ and $u_j$  are adjacent in $H$.

If $H$ and $G$ are graphs $X \subseteq V(H), Z \subseteq V(G)$ with $|X|=|Z|=k$ then a \emph{$Z$-rooted $(H,X)$-model in $G$} is a {$(v_1,\ldots,v_k)$-rooted $(H,u_1,\ldots,u_k)$-model  in $G$ for some ordering $(v_1,\ldots,v_k)$ of $Z$ and  $(u_1,\ldots,u_k)$ of $X$.

We will consider rooted models in three particular cases:
\begin{itemize}
	\item $|V(H)|=k$, in which case we refer to an $(H,V(H))$-model as a \emph{an $H$-model}, for brevity;
	\item $V(H)$ admits a partition $(A,B)$ such that $|A|=k,$ $A$ is independent, every vertex of  $A$ is adjacent to every vertex of $B$, and either $B$ is independent or $B$ is a clique. In the first case we refer to a rooted $(H,A)$-model as a \emph{rooted $K_{k,|B|}$-model} and in the second as a \emph{rooted $K^*_{k,|B|}$-model};
	\item $H$ is  a cycle of length $k$ with vertices $u_1,\ldots,u_k$ in order. Then we refer to a rooted $(H,u_1,\ldots,u_k)$-model as a  \emph{rooted $C_k$-model}.
\end{itemize}

Let $G$ be a graph, let $Z \subseteq V(G)$, and let $k \geq 0$ be an integer. We say that a separation $(A,B)$ of $G$ is \emph{a separation of $(G,Z)$}  if $Z \subseteq A$. We say that a pair $(G,Z)$ is \emph{internally $k$-connected} if there exists no separation $(A,B)$ of $(G,Z)$ of order less than $k$ such that $B-A \neq \emptyset$.  The following easy observation connects the properties of internally $k$-connected pairs and $k$-connected graphs.

\begin{obs}\label{o:intConnect} Let $G$ be a graph and let  $k \geq 0$ be an integer.	
	 \begin{enumerate}  
		\item Suppose that $|V(G)| \geq k+1$ and let $Z \subseteq V(G)$. Then $(G,Z)$ is internally  $k$-connected if and only if the graph obtained from $G$ by adding edges between all pairs of non-adjacent vertices of $Z$ is $k$-connected.
		\item If $G$ is $k$-connected and $(A,B)$ is a separation of $G$ of order at least $k$ then $(G[B],A~\cap~B)$ is internally $k$-connected. 
	\end{enumerate}	
\end{obs}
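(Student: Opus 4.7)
The plan for part~(1) is to exploit that the edges added in passing from $G$ to $G'$ all lie inside $Z$, so $Z$ spans a clique in $G'$. For the forward direction, if $G'$ fails to be $k$-connected, then since $|V(G')| = |V(G)| \geq k+1$ it admits a separation $(A,B)$ with $|A \cap B| < k$ and both $A \setminus B$ and $B \setminus A$ non-empty. No edge of $G'$ joins $A \setminus B$ to $B \setminus A$, so the clique $Z$ of $G'$ must lie on one side; after relabelling, $Z \subseteq A$. Restricting to $G$ (which is a subgraph of $G'$) the same pair $(A,B)$ is still a separation, and it witnesses that $(G,Z)$ is not internally $k$-connected. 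For the converse, given a separation $(A,B)$ of $(G,Z)$ of order less than $k$ with $B \setminus A \neq \emptyset$, all new edges of $G'$ lie within $Z \subseteq A$, so $(A,B)$ remains a separation of $G'$; combined with $|A \cap B| < k \leq |Z|$, which forces $Z \setminus B \subseteq A \setminus B$ to be non-empty, this exhibits $A \cap B$ as a vertex cut of $G'$ of size less than $k$.

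For part~(2) the plan is to promote a hypothetical bad separation of $(G[B], A \cap B)$ to a cut of $G$ of size less than $k$, contradicting the $k$-connectivity of $G$. Suppose $(C,D)$ is a separation of $G[B]$ with $A \cap B \subseteq C$, $|C \cap D| < k$, and $D \setminus C \neq \emptyset$. I would consider the pair $(A \cup C, D)$ and check the axioms directly: its union is $A \cup (C \cup D) = A \cup B = V(G)$; every edge of $G$ lives in $G[A] \cup G[B]$ and every edge of $G[B]$ lives in $G[C] \cup G[D]$, so $G[A \cup C] \cup G[D] = G$; and the inclusion $A \cap D \subseteq A \cap B \subseteq C$ collapses $(A \cup C) \cap D$ to $C \cap D$, keeping the order below $k$. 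Non-triviality follows because $D \setminus (A \cup C) = D \setminus C \neq \emptyset$ by hypothesis, while $|A \cap B| \geq k > |C \cap D| \geq |A \cap B \cap D|$ produces a vertex of $A \cap B \subseteq A \cup C$ lying outside $D$. The resulting small non-trivial separation of $G$ contradicts $k$-connectivity, completing the proof.

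The main technical point, modest as it is, is this last non-triviality check in part~(2): one must rule out the degenerate situation in which the extended separation becomes trivial on one side, and this is precisely where the quantitative hypothesis $|A \cap B| \geq k$ does genuine work beyond the formal unravelling of definitions.
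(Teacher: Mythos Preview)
Your proof is correct and follows essentially the same route as the paper's. The one step not justified by the hypotheses is your invocation of $k \le |Z|$ in the converse of part~(1); in fact the equivalence as stated fails when $|Z| < k$ (take $G = K_{k+1}$ and $Z = \emptyset$), so this is a defect in the observation itself rather than in your argument---the paper's own proof glosses over the same point, and every application in the paper has $|Z| = k$.
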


\begin{proof} \qquad 
	
	\begin{enumerate} \item
	Let $G^*$ be the graph obtained from $G$ 	by adding edges between all pairs of non-adjacent vertices of $Z$. Then for every separation $(A,B)$ of $G^*$ we have $Z \subseteq A$ or $Z \subseteq B$. Thus, either $(A,B)$ or $(B,A)$ is a separation of $(G,Z)$. If $(G,Z)$ is internally $k$-connected, this implies that every non-trivial separation of $G^*$ has order at least $k$ and so $G^*$ is $k$-connected. 
	
	Conversely, any separation $(A,B)$ of $(G,Z)$ is a separation of $G^*$. 
    In particular, every separation $(A,B)$ of $(G,Z)$ with $B-A \neq \emptyset$ is either a non-trivial separation of $G^*$, or is of the form $(Z',V(G))$ for some $Z' \supseteq Z$. In either case, if  $G^*$ is $k$-connected, such a separation has order at least $k$, implying that $(G,Z)$ is internally $k$-connected.
	
	\item Let $(A',B')$ be a separation of  $(G[B],A \cap B)$  such that $B' - A' \neq \emptyset$. Then $(A  \cup A', B')$ is a separation of $G$ of the same order as $(A',B')$. If $(A  \cup A', B')$  is non-trivial then it follows that the order of  $(A',B')$  is at least $k$ by $k$-connectivity of $G$. Otherwise $B' = V(G)$ and so $A \cap B \subseteq A' \cap B'$, again implying that the order of  $(A',B')$  is at least $k$.
		\end{enumerate}	
\end{proof}

Our first result on rooted models determines the minimum number of edges sufficient to guarantee a $Z$-rooted $K_4$-model for an internally $4$-connected pair $(G,Z)$. We derive it from a structural result of Robertson, Seymour and Thomas. Stating their result requires the following definition.

 A \emph{trisection} of a graph $G$ is a triple $(A,B,C)$ of subsets of $V(G)$ such that
$A \cap B = A \cap C = B \cap C$ and $G[A] \cup G[B] \cup G[C]=G$. The \emph{order of a trisection $(A,B,C)$} is $|A \cap B \cap C|$.

\begin{thm}[Robertson, Seymour and Thomas {\cite[(2.6)]{RST1993}}]
	\label{t:RST26}
	Let $Z \subseteq V(G)$ with $|Z|=4$. Then either
	\begin{itemize}
		\item[(i)] $G$ has a $Z$-rooted $K_4$-model, or
		\item[(ii)] there is a trisection $(A_1,A_2,B)$ of order two such that $|Z \cap (A_i \setminus B)| = 1$ for $i = 1, 2$, or
		\item[(iii)] there is a separation $(A,B)$ of $(G,Z)$ of order at most three such that $|B \setminus A| \geq 2$ and $|Z \cap B| \leq 2$, or
		\item[(iv)] $G$ can be drawn in a plane so that every vertex in $Z$ is incident with the infinite region.
	\end{itemize}
\end{thm}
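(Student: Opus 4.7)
The plan is to reduce the question to a $K_5$-minor problem via an apex construction. Let $G^+$ be the graph obtained from $G$ by adjoining a new vertex $v^*$ with $N_{G^+}(v^*)=Z$. Then $G$ has a $Z$-rooted $K_4$-model if and only if $G^+$ has a $K_5$-model with $\{v^*\}$ as one of its bags: the four remaining bags are disjoint connected subsets of $V(G)$, pairwise adjacent in $G$, and each is forced by its adjacency with $v^*$ to contain exactly one vertex of $Z$. So the statement becomes a classification of those pairs $(G^+,v^*)$ with $\deg(v^*)=4$ that fail to have such a $K_5$-model.

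I would proceed by induction on $|V(G)|+|E(G)|$, assuming $(G,Z)$ is a minimum counterexample in which none of (i)--(iv) holds. A first reduction handles low-order separations. Whenever $(A,B)$ is a separation of $(G,Z)$ of order at most three with $|B\setminus A|\ge 2$, the failure of (iii) forces $|Z\cap B|\ge 3$; and when $|B\setminus A|=1$, the unique vertex of $B\setminus A$ has all of its neighbours in the separator and can be deleted via the inductive hypothesis. After these reductions we may assume that $(G,Z)$ is internally $3$-connected in the sense of \cref{o:intConnect}, so that every small separation of $G$ leaves at most one vertex of $Z$ outside.

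The structural core splits according to whether $G^+$ has a $K_5$-minor at all. If it does not, apply Wagner's structure theorem, which decomposes $G^+$ as a clique-sum of order at most three of planar graphs and copies of the Wagner graph $V_8$. Since $V_8$ is $3$-regular while $v^*$ has degree four, the piece containing $v^*$ cannot be $V_8$; combined with the internal $3$-connectivity of the previous step, this forces $Z \cup \{v^*\}$ to lie inside a single planar piece, with $Z$ on the boundary of the face formerly occupied by $v^*$. Erasing $v^*$ and flattening the decomposition then yields a planar drawing of $G$ with $Z$ on the outer face, giving (iv). If $G^+$ does have a $K_5$-minor but no such minor uses $\{v^*\}$ as a bag, a submodular-cut argument produces a minimal separation of $G^+$ that ``blocks'' $v^*$ from every possible four-bag realization; its order is at most three, and translating it back to $G$ gives (iii) directly, unless the order is exactly two and $Z$ splits evenly between the two sides of the blocker, in which case iterating the reduction on the outside produces the order-two trisection $(A_1,A_2,B)$ required by (ii).

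The main obstacle I foresee is in this final translation step: the chosen blocking separation must simultaneously obstruct \emph{every} choice of the four non-apex bags, and its restriction to $G$ must satisfy the precise numerical conditions on $|Z\cap B|$ and $|B\setminus A|$ that distinguish (ii) from (iii). Iterating a submodular-cut argument while maintaining the internal $3$-connectivity of the first step should supply the necessary fine control, but matching the $K_5$-minor structure in $G^+$ with the trisection/separation structure in $G$ is where I expect the bulk of the technical work to lie.
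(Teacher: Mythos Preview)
The paper does not prove this statement: \cref{t:RST26} is quoted as a result of Robertson, Seymour and Thomas \cite[(2.6)]{RST1993} and is used as a black box in the proof of \cref{l:K4}. There is therefore no ``paper's own proof'' against which to compare your proposal.

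That said, a brief comment on your sketch. The apex reduction is correct and standard: $G$ has a $Z$-rooted $K_4$-model if and only if $G^+$ has a $K_5$-model in which $\{v^*\}$ is a bag. The Wagner-theorem branch is also morally right, and indeed this is close in spirit to how Robertson, Seymour and Thomas argue in \cite{RST1993}. But your treatment of the ``$G^+$ has a $K_5$-minor but not one with $\{v^*\}$ as a bag'' case is where the proposal becomes a plan rather than a proof. A single submodular cut will not in general block \emph{every} $K_5$-model from having $\{v^*\}$ as a bag, and you have not specified what quantity you are minimising or why the resulting separation has order at most three. More seriously, outcome (ii) --- the order-two trisection with exactly one vertex of $Z$ in each of $A_1\setminus B$ and $A_2\setminus B$ --- is not something a generic separation/uncrossing argument produces; it arises from a specific configuration (two vertices of $Z$ separated from the other two by a $2$-cut, with the ``other two'' side still rich enough that (iii) fails). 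Your final paragraph acknowledges this gap, and it is a real one: the actual proof in \cite{RST1993} handles outcomes (ii) and (iii) by a more delicate structural case analysis rather than a single blocker.
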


We use the following consequence of \cref{t:RST26}. 
 	 
 \begin{lem}\label{l:K4}
		Let $G$ be a graph, and let $Z \subseteq V(G)$ with $|Z|=4$ be such that $(G,Z)$ is internally $4$-connected. If $G$ has no $Z$-rooted $K_4$-model then $$|E(G)| \leq 3|V(G)|-7.$$ 
	\end{lem}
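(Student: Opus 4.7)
The plan is to proceed by induction on $|V(G)|$ and apply \cref{t:RST26} to the pair $(G,Z)$. The base case $|V(G)|=4$ is immediate: $K_4$ is the unique $4$-vertex graph admitting a $Z$-rooted $K_4$-model, so $G\not\simeq K_4$ and $|E(G)|\le 5 = 3|V(G)|-7$. For the inductive step, since $G$ has no $Z$-rooted $K_4$-model, outcome (i) of \cref{t:RST26} fails, so we are in (ii), (iii) or (iv).

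Outcome (iii) is immediately excluded: it supplies a separation $(A,B)$ of $(G,Z)$ of order at most $3$ with $|B\setminus A|\ge 2$, directly contradicting internal $4$-connectivity of $(G,Z)$. Outcome (iv) is handled via planarity: given a planar drawing of $G$ with every vertex of $Z$ on the unbounded face, insert a new vertex $v^*$ in that face joined to each vertex of $Z$. The resulting graph $G^+$ is planar with $|V(G)|+1$ vertices and $|E(G)|+4$ edges, so the standard Euler-formula bound $|E(G^+)|\le 3|V(G^+)|-6$ rearranges to exactly $|E(G)|\le 3|V(G)|-7$.

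Outcome (ii) is the substantive case. It provides a trisection $(A_1,A_2,B)$ of order $2$ with separator $S=A_1\cap A_2\cap B=\{s_1,s_2\}$, together with roots $z_1\in A_1\setminus B$, $z_2\in A_2\setminus B$ and $z_3,z_4\in Z\cap B$. Since no edge of $G$ runs between $A_i\setminus S$ and the other parts of the trisection, the edges are cleanly partitioned:
$$|E(G)| \;=\; |E(G[A_1])|+|E(G[A_2])|+|E(G[B])|-2|E(G[S])|.$$
To each piece I associate a sub-instance of \cref{l:K4} by adjoining (where necessary) an apex $w_i$ adjacent to $\{z_i,s_1,s_2\}$ for $i=1,2$, and analogously enlarging $G[B]$ to obtain a rooted set of size $4$. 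Each such sub-instance is internally $4$-connected, by lifting any putative low-order separation through the trisection (noting that a separation placing an apex on both sides forces its three rooted neighbors into the cut). Moreover, a $Z_i^+$-rooted $K_4$-model in the apex sub-instance forces the bag of $w_i$ to be the singleton $\{w_i\}$, and therefore corresponds to a $\{z_i,s_1,s_2\}$-rooted $K_3$-model, i.e.\ a linked triangle, inside $G[A_i]$.

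If every piece admitted its rooted model, one could stitch them across $S$ into a $Z$-rooted $K_4$-model of $G$ itself: assign $s_1$ to the $z_1$-bag by absorbing each piece's $s_1$-bag into it, assign $s_2$ to the $z_2$-bag analogously, and leave the $z_3$- and $z_4$-bags inside $B$; pairwise adjacency then follows from the pairwise adjacency within each piece. This contradicts the assumption on $G$, so at least one piece lacks its rooted model. For each piece that lacks it, the inductive hypothesis yields $|E(G[A_i])|\le 3|A_i|-7$; for each piece that possesses its rooted model, I would contract that piece in $G$ to its rooted triangle (or rooted $K_4$), obtaining a strictly smaller minor $G'$ of $G$ that remains internally $4$-connected and free of a $Z$-rooted $K_4$-model, and apply induction to $G'$. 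Substituting all the resulting bounds into the trisection identity then yields $|E(G)|\le 3|V(G)|-7$ (with a bit of slack). The hard part is precisely case (ii): verifying that both the apex enlargement and the piece-contraction reductions preserve internal $4$-connectivity and the absence of a $Z$-rooted $K_4$-model, and managing the bookkeeping of which pieces are bounded inductively versus contracted away.
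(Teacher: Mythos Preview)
Your handling of the base case and of outcomes (i), (iii) and (iv) of \cref{t:RST26} is correct and essentially matches the paper's (the paper adds a single edge across the outer face rather than an apex vertex in (iv), but your variant gives the same bound).

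For outcome (ii), however, you are working much harder than necessary and leaving the hardest part unverified. The observation you are missing is that internal $4$-connectivity of $(G,Z)$ forces each side piece $A_i$ to be trivial: if $A_i \setminus (S \cup Z) \neq \emptyset$, then $(A_{3-i}\cup B\cup Z,\; A_i)$ is a separation of $(G,Z)$ of order $|S \cup \{z_i\}| = 3$ with $A_i \setminus (S \cup Z)$ nonempty on the far side, contradicting internal $4$-connectivity. Hence $A_i = S \cup \{z_i\}$ for $i=1,2$. There is then no need for apex vertices, rooted $K_3$-models in the side pieces, or any contraction scheme: the entire reduction is to the single piece $G[B]$ with root set $S \cup \{z_3,z_4\}$ (after checking these four vertices are distinct, which follows from $|B|\ge 5$ and internal $4$-connectivity applied to the separation $(A_1\cup A_2\cup Z,\,B)$), and a single inductive call finishes. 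Your stitching argument degenerates to: extend a rooted $K_4$-model in $G[B]$ by attaching each $z_i$ to the bag of a neighbour of $z_i$ in $S$.

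Your more elaborate plan might be salvageable, but as you yourself note, the verifications it requires (that apex-enlarged pieces are internally $4$-connected, that contracting a piece with a rooted model preserves both internal $4$-connectivity and the absence of a global $K_4$-model, and that the edge bookkeeping closes) are not carried out. In fact, if you try to verify that the apex piece $(G[A_i]^+,\{w_i,z_i,s_1,s_2\})$ is internally $4$-connected by lifting a small separation back to $G$, the argument you sketch is exactly the computation above showing $A_i \setminus S = \{z_i\}$; so your machinery, correctly unwound, collapses to the simpler argument anyway.
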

	
\begin{proof} We prove the lemma by induction on $|V(G)|$. The base case ($|V(G)| = 4$) holds, as in this case $|E(G)| \leq 3|V(G)|-7 =5$, or $Z$ induces $K_4$ in $G$.
	
	 For the induction step we have $|V(G)| \geq 5$. We apply \cref{t:RST26} to $G$ and $Z$. If outcome (i) holds then $G$ has a $Z$-rooted $K_4$-model. 
	 
	 If (iv) holds then we can add an edge to a planar drawing of $G$ to triangulate the infinite region, and using the bound on the maximum number of edges in the planar graph we conclude  $|E(G)| + 1 \leq 3|V(G)|-6$, i.e.   $|E(G)| \leq 3|V(G)|-7$. 
	 
	 As $(G,Z)$ is internally $4$-connected, (iii) cannot hold.
	
 	Finally, suppose that (ii) holds and  there is a trisection $(A_1,A_2,B)$ of $G$ of order two such that $|Z \cap (A_i \setminus B)| = 1$ for $i = 1, 2$. Let $Z'= A_1 \cap A_2 \cap B$.
	If $A_i \setminus Z' \setminus Z \neq \emptyset$ for some $i$ then the separation $(B \cup A_{3-i} \cup Z, A_i)$  contradicts internal $4$-connectivity of $(G,Z)$. Thus $A_i \setminus Z'$ consists of a single vertex of $Z$ for $i =1,2$. If $|B| \leq 4$, then $|E(G[B])| \leq \binom{|B|}{2} \leq 3|B|-6$. Therefore \begin{equation}\label{e:GB}|E(G)| \leq |E(G[B])| + 4 \leq 3|B|-2 = 3(|V(G)|-2)-2 \leq 3|B|-8.\end{equation}
	Thus we assume $|B| \geq 5$. In particular, by internal $4$-connectivity of $(G,Z)$, the separation $(A_1 \cup A_2 \cup Z, B )$ has order at least four, i.e. $$ |Z' \cup (Z \cap B)| = |(A_1 \cup A_2 \cup Z) \cap B| \geq 4.$$
	As $|Z'|=|Z \cap B| = 2$, it follows that $Z' \cap Z \cap B = \emptyset$. 
	
Let $Z''= Z-B = (A_1 \cup A_2) - B$. As $(G,Z)$ is internally $4$-connected and $V(G)-Z \neq \emptyset$ every vertex in $Z'' \subseteq Z$ has a neighbor in $V(G)-Z$ and  these neighbors must lie in $(A_1 \cup A_2) - Z = Z'$. Similarly every vertex of $Z'$ has a neighbor in $Z''$.  Thus we may assume that $Z''=\{a_1,a_2\},$ $Z'=\{b_1,b_2\}$ and $a_ib_i \in E(G)$ for $i=1,2$. If $G[B]$ contains a 
$(Z' \cup (B \cap Z))$-rooted $K_4$-model, this model could be extended to a $Z$-rooted $K_4$-model in $G$ using these two edges.

Thus  we assume that $G[B]$ contains no such model and by the induction hypothesis we have $|E(G[B])| \leq 3|B|-7$. Analogously to \eqref{e:GB} we conclude $|E(G)| \leq 3|V(G)|-9 < 3|V(G)|-7$, a contradiction, finishing the proof of the lemma.  
\end{proof}	

The next tool we will use in \cref{s:extremal} is a reformulation of a lemma of J{\o}rgensen.

\begin{lem}[J{\o}rgensen {\cite[Lemma 16 (2)]{Jorgensen1994}}]\label{c:RST64} Let $G$ be a graph with $|V(G)| \geq 6$, and let $Z \subseteq V(G)$ with  $|Z|=4$ be such that $(G,Z)$ is internally $4$-connected. Then $G$ has a $Z$-rooted $K^-_4$-model.
\end{lem}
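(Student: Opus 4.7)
The plan is to apply \cref{t:RST26} to the pair $(G,Z)$. In case (i), a $Z$-rooted $K_4$-model is already a $Z$-rooted $K^-_4$-model. Case (iii) is excluded by internal $4$-connectivity of $(G, Z)$, exactly as in the proof of \cref{l:K4}. So I may assume that one of cases (ii) or (iv) holds and that no $Z$-rooted $K_4$-model exists.

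In case (iv), I fix a planar embedding of $G$ with $Z$ on the outer face, and let $z_1, z_2, z_3, z_4$ be the cyclic order of $Z$ along the outer face boundary. The four arcs of the outer face between consecutive roots yield internally disjoint paths joining $z_i$ and $z_{i+1}$ (indices mod $4$). A $Z$-rooted $K^-_4$-model, with missing edge between the bags of $z_2$ and $z_4$, will follow once I produce a $z_1 z_3$-path $Q$ in $G$ internally disjoint from those arcs. By \cref{o:intConnect}(1), the graph $G^*$ obtained from $G$ by adding all edges between non-adjacent pairs in $Z$ is $4$-connected, and Menger's theorem applied in $G^* - z_1 z_3$ gives three internally disjoint $z_1 z_3$-paths. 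Since at most two of them can use $z_2$ or $z_4$ as internal vertices, at least one lies in $G$; a leftmost-path adjustment in the planar embedding then produces the required $Q$ disjoint from the outer face arcs.

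In case (ii), the internal $4$-connectivity argument from the proof of \cref{l:K4} forces the trisection $(A_1, A_2, B)$ of order $2$ to satisfy $A_i = \{a_i, b_1, b_2\}$, where $\{b_1, b_2\} = A_1 \cap A_2$ and $a_i$ is the unique element of $Z \cap (A_i \setminus B)$. The remaining two vertices of $Z$, which I call $z_3$ and $z_4$, lie in $B \setminus \{b_1, b_2\}$, and $(G[B], \{b_1, b_2, z_3, z_4\})$ is itself internally $4$-connected. I would then proceed by induction on $|V(G)|$: when $|B| = |V(G)|-2 \geq 6$, the inductive hypothesis supplies a $\{b_1, b_2, z_3, z_4\}$-rooted $K^-_4$-model in $G[B]$, and I enlarge the bags containing $b_1, b_2$ by absorbing $a_1, a_2$ through edges $a_ib_j \in E(G)$ (whose existence follows from $4$-connectivity of $G^*$), producing a $Z$-rooted $K^-_4$-model in $G$. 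The base cases $|V(G)| \in \{6, 7\}$ reduce to small graphs subject to heavy degree constraints on $b_1, b_2$, and I dispatch these by direct construction.

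The main obstacle is the extension step in case (ii): the bag-enlargement scheme produces the correct pattern of five edges between bags only when the missing edge of the $G[B]$-model is aligned with the $b_1$-$b_2$ bag positions. Handling every case will require exploiting the flexibility to merge each $a_i$ with either $b_1$'s or $b_2$'s bag (and, if necessary, strengthening the inductive hypothesis to designate the location of the missing edge); verifying that this flexibility suffices appears to be the most delicate part of the argument.
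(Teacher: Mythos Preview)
The paper does not prove this lemma; it is quoted from J{\o}rgensen and used as a black box. So there is no in-paper argument to compare against, and I can only comment on the soundness of your proposal.

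Your stated ``main obstacle'' in case (ii) is not an obstacle at all. Given a $\{b_1,b_2,z_3,z_4\}$-rooted $K_4^-$ model in $G[B]$, adjoin $a_i$ to the bag containing $b_i$ via the edge $a_ib_i$ (whose existence follows exactly as in the proof of \cref{l:K4}). The enlarged bags are connected and contain the correct roots $a_1,a_2,z_3,z_4$, and since adjoining vertices to bags can only \emph{add} adjacencies between bags, the resulting model still has at least five of the six pairs adjacent --- regardless of which pair was the missing one in the $G[B]$-model. No alignment of the missing edge is needed, and no strengthened induction hypothesis is required.

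The genuine gap is in case (iv). You assert that a ``leftmost-path adjustment'' converts an arbitrary $z_1z_3$-path in $G-\{z_2,z_4\}$ into one internally disjoint from the outer cycle, but you neither define this adjustment nor argue that it succeeds. In fact such a path need not exist: if $z_1z_3\notin E(G)$ and every interior vertex reachable from $z_1$ without touching $C$ is separated from $z_3$ by $C$, then every $z_1z_3$-path in $G-\{z_2,z_4\}$ runs partly along the arcs, and your bag assignment (four arcs plus $Q$) collapses. A correct treatment of the planar case has to allow $Q$ to share vertices with $C$ and build the four bags more carefully, or take an entirely different route. You also use without proof that the outer-face boundary is a simple cycle (equivalently, that $G$ is $2$-connected); this is true under internal $4$-connectivity with $|V(G)|\ge 6$, but it needs a short argument. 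The planar case, not the trisection case, is where the work is.
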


Our third result on rooted minors is an extension of another result of  J{\o}rgensen on the extremal function of rooted $K_{4,2}$-models. (See~\cite{Wollan2008} for generalization of this result to $K_{t,2}$-models.)

\begin{thm}[Jorgensen~{\cite[Lemma 17]{Jorgensen1994}}]\label{l:K42}
	Let $G$ be a graph, and let $Z \subseteq V(G)$ with $|Z|=4$ be such that $(G,Z)$ is internally $4$-connected. If $G$ has no $Z$-rooted $K_{4,2}$-model then $$|E(G)| \leq 4|V(G)|-10.$$ 
\end{thm}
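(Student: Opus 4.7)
I would prove \cref{l:K42} by induction on $|V(G)|$. The base case $|V(G)|\le 5$ is immediate since $|E(G)|\le\binom{5}{2}=10\le 4|V(G)|-10$. For the inductive step, take a counterexample $(G,Z)$ of minimum order, so $|E(G)|\ge 4|V(G)|-9$ and $(G,Z)$ is internally $4$-connected with no $Z$-rooted $K_{4,2}$-model. Summing degrees, $\sum_{v\in V(G)\setminus Z}\deg(v)\ge 8|V(G)|-18-\sum_{z\in Z}\deg(z)\ge 4|V(G)|-14$, so the average degree over $V(G)\setminus Z$ is only slightly greater than $4$. Combined with the observation that internal $4$-connectivity of $(G,Z)$ forces $\deg(v)\ge 4$ for every $v\in V(G)\setminus Z$, this yields, for $|V(G)|\ge 7$, a vertex $v\in V(G)\setminus Z$ of degree exactly $4$.

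The workhorse reduction is edge contraction. For any edge $vw$ with $v\in V(G)\setminus Z$ such that $(G/vw,Z)$ remains internally $4$-connected, a $Z$-rooted $K_{4,2}$-model in $G/vw$ would lift to one in $G$ by re-expanding the contracted vertex, so $(G/vw,Z)$ is also a counterexample; by the inductive hypothesis, $|E(G/vw)|\le 4|V(G/vw)|-10$, and using $|E(G)|=|E(G/vw)|+1+|N(v)\cap N(w)|$ this forces $|N(v)\cap N(w)|\ge 3$, a strong constraint on the local structure around $v$ that one exploits either to locate an explicit $K_{4,2}$-model or to select a more favorable contraction.

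The main obstacle is the complementary case where no such contraction is available: for every edge $vw$ incident to a degree-$4$ vertex $v\in V(G)\setminus Z$, the pair $(G/vw,Z)$ fails to be internally $4$-connected, which means $G$ has a $3$-separation $(A,B)$ with $\{v,w\}\subseteq B$, $Z\subseteq A$, $|A\cap B|\le 3$, and $|B\setminus A|\ge 2$. I would split $G$ along a smallest such cut: the side containing $Z$ can be handled inductively after turning $A\cap B$ into a clique and verifying that a $Z$-rooted $K_{4,2}$-model in the augmented pair would lift to one in $G$ by routing through $B$, while the side not containing $Z$ is small and can be bounded directly, using \cref{l:K4} applied to an augmented graph rooted at $(A\cap B)$ together with a virtual apex (since an appropriate rooted $K_4$-model on the $B$-side combined with paths through $A$ would produce a $Z$-rooted $K_{4,2}$-model in $G$). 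Adding the two bounds contradicts $|E(G)|\ge 4|V(G)|-9$. The most delicate subcase is $\deg(v)=4$, where $N(v)$ is itself a $4$-separator and one must carefully rule out or reduce every configuration of the three-vertex interaction between $N(v)$ and $Z$; this is the technical core of the argument.
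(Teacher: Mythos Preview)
The paper does not prove \cref{l:K42} at all: it is quoted verbatim as J{\o}rgensen's result \cite[Lemma~17]{Jorgensen1994} and used as a black box in the proof of \cref{l:K42star}. So there is no ``paper's own proof'' to compare your attempt against.

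That said, your sketch contains a genuine error in the degree-counting step. You write
\[
\sum_{v\in V(G)\setminus Z}\deg(v)\;\ge\; 8|V(G)|-18-\sum_{z\in Z}\deg(z)\;\ge\; 4|V(G)|-14,
\]
and conclude that ``the average degree over $V(G)\setminus Z$ is only slightly greater than $4$'', hence there is a vertex of degree exactly $4$. But the displayed chain is a \emph{lower} bound on the sum of degrees, not an upper bound. A lower bound on the average, together with the fact that every $v\in V(G)\setminus Z$ has $\deg(v)\ge 4$, tells you nothing about the existence of a vertex of degree exactly $4$; it is entirely consistent with every such vertex having degree $5$ or more. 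The inequality points the wrong way for the conclusion you draw.

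This matters because the whole contraction strategy you outline hinges on locating such a low-degree vertex $v$. Without it, you have no edge $vw$ to contract with the guarantee that $|E(G)|-|E(G/vw)|\le 4$, and the inductive bookkeeping collapses. J{\o}rgensen's actual argument (and the typical approach in this family of extremal results) does not proceed by first isolating a minimum-degree vertex from the edge count; rather, one argues that \emph{every} contraction $G/vw$ with $v\notin Z$ either produces the rooted model or drops below the edge threshold, and then exploits the resulting common-neighbour constraints globally, or one works with carefully chosen separations from the outset. If you want to repair your outline, you should either find a correct reason a degree-$4$ vertex must exist in a minimal counterexample (this is not automatic), or restructure the induction so that it does not depend on one.
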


We will improve on \cref{l:K42} by replacing the  $K_{4,2}$-model by a $K^*_{4,2}$-model in the conclusion.

 \begin{lem}\label{l:K42star}
	Let $G$ be a graph, and let $Z \subseteq V(G)$ with $|Z|=4$ be such that $(G,Z)$ is internally $4$-connected. If $G$ has no $Z$-rooted $K^*_{4,2}$-model then $$|E(G)| \leq 4|V(G)|-10.$$ 
\end{lem}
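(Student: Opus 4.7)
The plan is to proceed by induction on $|V(G)|$, bootstrapping from \cref{l:K42}. Assume for contradiction that $|E(G)|\geq 4|V(G)|-9$ and $G$ has no $Z$-rooted $K^*_{4,2}$-model. Since $|E(G)|>4|V(G)|-10$, \cref{l:K42} supplies a $Z$-rooted $K_{4,2}$-model; I fix such a model $\alpha$ with bags $X_1,\ldots,X_4,Y_1,Y_2$ (where $z_i\in X_i$) minimizing $\sum_i|X_i|+|Y_1|+|Y_2|$.

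The first key step is to show that $Y_1$ and $Y_2$ lie in distinct components of $G-\bigcup_i X_i$. Otherwise, take a shortest path $P=w_0 w_1 \cdots w_k$ from $Y_1$ to $Y_2$ inside $G-\bigcup_i X_i$. If $k=1$, the single edge $w_0 w_1$ already upgrades $\alpha$ to a $Z$-rooted $K^*_{4,2}$-model. If $k\geq 2$, absorbing $\{w_1,\ldots,w_{k-1}\}$ into $Y_1$ preserves connectivity of the bag and its adjacency to each $X_i$, and the edge $w_{k-1}w_k$ now joins the enlarged bag to $Y_2$, again producing a $K^*_{4,2}$-model. Either outcome contradicts the assumption.

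Let $A=\bigcup_i X_i$ and let $W_j$ be the union of components of $G-A$ meeting $Y_j$; the previous step gives a non-trivial separation $(A\cup W_1,A\cup W_2)$ of $G$ of order $|A|\geq 4$. Writing $G_j=G[A\cup W_j]$, I would use \cref{o:intConnect}(2) and the minimality of $\alpha$ to verify that $(G_j,Z)$ is internally $4$-connected and that $G_j$ inherits the absence of a $Z$-rooted $K^*_{4,2}$-model from $G$. Since $|V(G_j)|<|V(G)|$, the induction hypothesis yields $|E(G_j)|\leq 4|V(G_j)|-10$, and summing gives
\[
|E(G)| \;\leq\; 4|V(G)|+4|A|-20-|E(G[A])|,
\]
whence $|E(G[A])|\leq 4|A|-11$.

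Finishing is the main obstacle. When $|A|=4$ each $X_i=\{z_i\}$, and the bound forces $|E(G[Z])|\leq 5$; in the residual subcase $|E(G[Z])|=5$ I would add the unique missing edge of $Z$ to form $G^+$, observe that no $Z$-rooted $K^*_{4,2}$-model of $G^+$ can use this edge (the $A$-side of $K^*_{4,2}$ is independent, so the edge appears between two $X$-bags and is not required by any model), and rerun the analysis on $G^+$: the original $\alpha$ remains a minimum $K_{4,2}$-model of $G^+$ with $|A^+|=4$, and the extra edge sharpens the derived bound to $|E(G^+[Z])|\leq 4$, contradicting that $Z$ is now a clique in $G^+$. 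When $|A|>4$ I would exploit the minimality of $\alpha$ --- each $X_i$ is a tree whose leaves are either $z_i$ or adjacent to $Y_1\cup Y_2$ --- together with the density bound on $G[A]$ to derive a contradiction through a finer case analysis. The delicate points I expect to be hardest are the justification of internal $4$-connectivity of $(G_j,Z)$ when $|A|>4$ (where the separator of a hypothetical small cut of $G_j$ may slice through a non-singleton $X_i$) and the structural analysis of $G[A]$ under the tight density constraint $|E(G[A])|\leq 4|A|-11$.
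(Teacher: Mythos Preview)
Your opening moves are sound and match the paper: take a minimum counterexample, invoke \cref{l:K42} to get a $K_{4,2}$-model, and observe that an edge (or path) between the two ``$Y$''-bags would upgrade it to a $K^*_{4,2}$-model. The divergence comes in how you set up the separation, and this is where the genuine gap lies.

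You take the separator to be $A=\bigcup_i X_i$, which may have size larger than four, and you try to induct on $(G_j,Z)$ with the \emph{original} roots $Z$. As you yourself flag, when $|A|>4$ there is no reason $(G_j,Z)$ should be internally $4$-connected: a small cut of $G_j$ may put a non-root vertex of some $X_i$ on the far side, and that vertex can have neighbours in $W_{3-j}$, so the cut does not lift to $G$. Your edge count also silently assumes $W_1\cup W_2=V(G)\setminus A$, which need not hold. Most importantly, the entire case $|A|>4$ is left as ``a finer case analysis''; the density bound $|E(G[A])|\le 4|A|-11$ alone is far too weak to finish, and I do not see a route from here that does not essentially rebuild the paper's argument. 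Even in the $|A|=4$ subcase, your ``rerun on $G^+$'' step asserts that the original $\alpha$ is still a minimum model in $G^+$, which is not justified; a smaller model in $G^+$ could well have $|A^+|>4$, sending you back to the unresolved case.

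The paper avoids all of this by choosing the model differently: it takes the $H_i$ (your $X_i$) \emph{minimal} and the $J_j$ (your $Y_j$) \emph{maximal}, rather than minimizing total size. With this choice one shows that each $H_i$ is a path whose tip $v_i$ is the \emph{unique} vertex of $H_i$ with a neighbour in $J_1$, and (by maximality of $J_1$) that $J_1$ has no neighbours outside $\{v_1,v_2,v_3,v_4\}=:Z'$. This produces an exact $4$-separation $(V(G)\setminus V(J_1),\,V(J_1)\cup Z')$, and the induction is applied to $(G[V(J_1)\cup Z'],\,Z')$ with the \emph{new} roots $Z'$, not $Z$. The finish is then quite different from yours: if $|V(J_1)|\ge 2$, one uses \cref{c:RST64} to contract $J_1$ onto $Z'$ so that $Z'$ becomes a clique in a smaller minor $G'$, and checks that $G'$ (with the original $Z$) contradicts minimality; if $|V(J_1)|=|V(J_2)|=1$, a short but delicate contraction argument handles the remaining structure. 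The point is that engineering a separator of size \emph{exactly} four, and changing the root set accordingly, is what makes the induction go through cleanly.
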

	
\begin{proof} 
	Suppose for a contradiction that $|E(G)| \geq 4|V(G)|-9,$  $G$ has no rooted  $Z$-rooted $K^*_{4,2}$-model and choose such a pair $(G,Z)$ satisfying the conditions of the lemma with $|V(G)|$ minimum.
	
	By \cref{l:K42} the graph $G$ has a $Z$-rooted $K_{4,2}$-model, i.e. there exist  six pairwise vertex disjoint non-null connected subgraphs $H_1,H_2,H_3,H_4,J_1$ and $J_2$ of $G$ such that for all $1 \leq i \leq 4, 1 \leq j \leq 2$ \begin{itemize}
		\item $|V(H_i) \cap Z|=1$, and
		\item there exists an edge of $G$ with one end in $V(H_i)$ and another in $V(J_j)$.
	\end{itemize} 
Choose such subgraphs with $H_i$ minimal and $J_j$ maximal for all $i$ and $j$.  Then each $H_i$ is a tree. 
	
	If $|V(H_i)|=1$, let $v_i$ be its vertex. Otherwise, let $v_i \in V(G) - Z$ be a leaf of $H_i$. Suppose that $v_i$ is not the only vertex of $H_i$ with a neighbor in $J_1$. Then $v_i \not \in Z$. If $v_i$ has no neighbor in $J_{2}$ then $H_i \setminus v_i$ contradicts minimality of $H_i$. Otherwise, we can  add $v_i$ to $J_{2}$ and remove it from $H_i$ once again contradicting the choice of $H_i$. 	Thus $v_i$ is  the only vertex of $H_i$ with a neighbor in $J_1$. In particular, this implies that $H_i$ is a (possibly trivial) path from $Z$ to $v_i$. 
	
	Further, by maximality of $J_1$ vertices of $J_1$ have no neighbors in $V(G) - \cup_{i=1}^4 V(H_i) - V(J_1) -V(J_2)$. If there is an edge of $G$ with one end in $V(J_1)$ and another in $V(J_2)$ then $G$ has a   $Z$-rooted $K^*_{4,2}$-model, so  no such edge exists. 
	
	It follows that all the neighbors of vertices of  $J_1$ in $V(G) -V(J_1)$ are in the set $Z'=\{v_1,v_2,v_3,v_4\}$. Thus $(V(G) - V(J_1), V(J_1) \cup Z')$ is a separation of $G$ of order four. 	 If $G[V(J_1) \cup Z']$ has a $Z'$- rooted $K^*_{4,2}$-model then it can be extended to a  $Z$-rooted $K^*_{4,2}$-model in $G$ using the paths $H_1,\ldots, H_4$. Thus we assume that no such model exists, and by the induction hypothesis we have $$|E(G[V(J_1) \cup Z'])|  \leq 4(|V(J_1)|+4)-10 =  4|V(J_1)|+ 6,$$ implying \begin{equation}\label{e:J1} |E(G[V(J_1) \cup Z'])|- |E(G[Z'])| \leq 4|V(J_1)|.\end{equation}
	By symmetry, the analogous statement holds for $J_2$.
	
	Suppose that $|V(J_1)| \neq 1$. Then by \cref{c:RST64} the graph  $G[V(J_1) \cup Z']$ has a $Z'$-rooted $K^-_4$-model, and contracting $V(J_1)$ onto $Z'$ we obtain a minor of $G$ inducing at least five edges on $Z'$. Further contracting $V(J_2)$ onto $Z'$ we obtain a minor $G'$ of $G$ with $V(G')=V(G)-V(J_1)-V(J_2)$ such that $G'[Z']$ is complete. Moreover,  \begin{align*}|E(G')| &\geq |E(G)| - \sum_{i=1}^2(|E(G[V(J_i) \cup Z'])|- |E(G[Z'])|) \\ &\stackrel{\eqref{e:J1}}{\geq} |E(G)| - 4|V(J_1)|- 4|V(J_2)| \geq 4|V(G')|-9, \end{align*}
	$(G',Z)$ is internally $4$-connected, and $G'$ has no $Z$-rooted $K^*_{4,2}$-model, contradicting the choice of $G$. 
	
	It remains to consider the case  $|V(J_1)| = 1$. By symmetry we also assume $|V(J_2)|=1$, and let $V(J_i)=\{u_i\}$ for $i=1,2$. Let $G'=G/u_1v_1$. We have $|E(G')| \geq |E(G)|-4 \geq 4|V(G')|-9,$ and $G'$ has no $Z$-rooted $K^*_{4,2}$-model. It follows that $(G',Z)$ is not internally $4$-connected, i.e. there exists a separation $(A',B')$ of $G'$ of order at most three with $Z \subseteq A'$, $B'-A' \neq \emptyset$. Let $(A,B)$ be the corresponding separation of $G$. Then $u_1,v_1 \in A \cap B$,  as otherwise  $(A,B)$ contradicts the internal $4$-connectivity of $(G,Z)$, moreover $u_1$ must have neighbors in both  $A-B$ and $B-A$, as otherwise we could remove it from $A$ or $B$, again obtaining a contradiction. As $u_2$ has the same neighbors as $u_1$ it follows that $u_2 \in A \cap B$. 
	
	Suppose now that $u_1$ has a unique neighbor in $A-B$, say $v_2$. Then $(A-\{u_1,u_2\},B \cup \{v_2\})$ is a separation of $(G,Z)$ of order at most three, again contradicting the internal $4$-connectivity. Thus $u_1$ has at least two neighbors in $A-B$ and so at most one neighbor in $B-A$. Assume without loss of generality that $v_3$ is this neighbor. If $B-A \neq \{v_3\}$ then $(A \cup \{v_3\},B -\{u_1,u_2\})$ contradicts the internal $4$-connectivity. 
	
	Finally, we may assume  $B-A = \{v_3\}$. It follows that $v_3$ is adjacent to all the vertices of $A\cap B$. In particular, $v_1v_3 \in E(G)$. We now consider a minor $G'' = G /u_1v_2 / u_2v_4$ of $G$. The subgraph $G''[Z']$ is complete, which implies that $(G'',Z)$ is  internally $4$-connected. We have $|E(G'')| \geq |E(G)|-8 \geq 4|V(G'')|-9.$ Thus $G''$ contradicts the choice of $G$. This contradiction finishes the proof.   
	\end{proof}

Finally, we will use a result on existence of a pair of ``crossing'' disjoint paths. 

\begin{thm}[Robertson, Seymour and Thomas{\cite[(2.4)]{RST1993}}]
\label{l:RST24}
Let $v_1, \dots, v_k$ be distinct vertices of a graph $G$. Then either
\begin{itemize}
	\item[(i)] there are vertex disjoint paths in $G$ with ends $p_1 p_2$ and $q_l q_2$ respectively, so that $p_1, q_1, p_2, q_2$ occur in the sequence $v_1, \dots, v_k$ in order, or
	\item[(ii)] there is a separation $(A,B)$ of $G$ of oreder at most three with $v_1, \dots, v_k \in A$ and $|B \setminus A| \geq 2$, or
	\item[(iii)] $G$ can be drawn in a disc with $v_1, \dots, v_k$ on the boundary in order.
\end{itemize}
\end{thm}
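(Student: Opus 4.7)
The plan is to reduce the theorem to a statement about planarity via an apex construction. Let $G^+$ be obtained from $G$ by adding a new vertex $w$ joined to each of $v_1, \ldots, v_k$. Conclusion (iii) is then equivalent to $G^+$ admitting a planar embedding in which the cyclic rotation of edges around $w$ is $(v_1, v_2, \ldots, v_k)$. Assuming (i) and (ii) both fail, the goal becomes producing such an embedding.

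First I would reduce to the case that $G^+$ is $3$-connected. Any separation of $G^+$ of order at most $2$ with $w$ on one side corresponds, after removing $w$, to a separation $(A,B)$ of $G$ of order at most $3$ with $\{v_1,\ldots,v_k\}\subseteq A$; since (ii) fails, any such separation satisfies $|B\setminus A|\leq 1$. One can therefore inductively replace small ``torso'' pieces across these cuts by virtual edges or triangles on their boundary, maintaining both the non-existence of a cross and the cyclic labelling of the $v_i$'s. Iterating, we may assume $G^+$ is $3$-connected.

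Next I would argue that $G^+$ is planar. If not, $G^+$ contains a subdivision of $K_5$ or $K_{3,3}$. Since every neighbour of $w$ lies in $\{v_1,\ldots,v_k\}$, the branch paths of this subdivision that pass through $w$ yield, after splitting $w$ into its incident edges, disjoint paths in $G$ whose endpoints lie in $\{v_1,\ldots,v_k\}$. A case analysis on which Kuratowski graph appears, on whether $w$ is a branch vertex or lies on a branch path, and on the positions of the other $v_i$'s within the subdivision then produces two vertex-disjoint paths in $G$ whose four endpoints appear in the alternating cyclic order required by (i), contradicting its failure. This extraction is the main technical obstacle in the proof.

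Finally, since $G^+$ is $3$-connected and planar, Whitney's theorem gives a unique planar embedding up to reflection. In this embedding the edges incident to $w$ visit $v_{\pi(1)}, \ldots, v_{\pi(k)}$ in some cyclic order. If $\pi$ differs from the identity (even modulo reflection), then two incident edges $wv_i$ and $wv_j$ appear around $w$ in an order inconsistent with the required cyclic order of $v_1, \ldots, v_k$, and using the $3$-connectivity of $G^+$ one can convert this combinatorial crossing into two vertex-disjoint paths in $G$ realising a cross in the sense of (i), again a contradiction. Therefore $\pi$ is the identity and (iii) holds. Alongside the Kuratowski-to-cross extraction, the rerouting step needed to reduce to $3$-connectivity is the secondary technical difficulty, since it must neither destroy existing rural structure nor spuriously create a cross.
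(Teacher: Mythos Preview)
The paper does not prove this theorem; it is quoted as \cite[(2.4)]{RST1993} and used as a black box. There is therefore no proof in the paper to compare your proposal against.

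That said, your apex-vertex strategy is the standard route to results of this type (it is how Seymour and Shiloach proved the $k=4$ case, and Robertson--Seymour--Thomas's (2.4) is proved along similar lines). A couple of points where your sketch is loose: your order bookkeeping in the $3$-connectivity reduction is slightly off (a $2$-separation of $G^+$ with $w$ strictly on one side gives a separation of $G$ of order at most $2$, not $3$; the extra unit of order only appears when $w$ lies in the cut), and the final step---arguing that a wrong rotation at $w$ forces a cross---is not automatic from $3$-connectivity alone. You need that if the planar cyclic order $\sigma$ at $w$ differs from $(v_1,\ldots,v_k)$ then some four-element subset $\{v_{i_1},v_{i_2},v_{i_3},v_{i_4}\}$ with $i_1<i_2<i_3<i_4$ appears in $\sigma$ as $(v_{i_1},v_{i_3},v_{i_2},v_{i_4})$ or $(v_{i_1},v_{i_2},v_{i_4},v_{i_3})$, and then use $3$-connectivity of $G^+$ to route two disjoint fans from $w$ through the appropriate pairs; this is where the real work lies, alongside the Kuratowski extraction you already flagged.
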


Our final tool involving rooted minors, which we use in \cref{s:main}, is a result of Kriesell and Mohr that allows one convert Kempe chains into minors. Let $c: V(G) \to S$ be a (proper) coloring of a graph $G$.  A \emph {Kempe chain of a coloring $c$} is a connected component of $c^{-1}(s_1) \cup c^{-1}(s_2)$ for a pair of distinct $s_1,s_2 \in S$.   

\begin{thm}[Kriesell and Mohr~{\cite[Lemma 2]{KM2019}}]	\label{t:KM2019} Let  $c: V(G) \to S$ be a coloring of a graph $G$ and let $v_1,v_2,\ldots,v_k \in V(G)$ be such that $c(v_i) \neq c(v_j)$ for $i \neq j$. Suppose that for each $1 \leq  i \leq k$ there exists a Kempe chain of $c$ containing $v_{i}$ and $v_{i+1}$. (We define $v_{k+1}=v_1$.)  Then $G$ contains a $(v_1,\ldots,v_k)$-rooted $C_k$-model.
\end{thm}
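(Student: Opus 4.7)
Write $\alpha_i := c(v_i)$ for $i \in [k]$, with indices taken modulo $k$, and for each $i$ let $K_i$ denote the Kempe chain containing $v_i$ and $v_{i+1}$; then $K_i$ is a connected subgraph of $G$ whose vertices have color in $\{\alpha_i, \alpha_{i+1}\}$. The key structural observation is that the colors $\alpha_1, \dots, \alpha_k$ are pairwise distinct by hypothesis, so for $|i-j| \ge 2$ modulo $k$ the palettes $\{\alpha_i, \alpha_{i+1}\}$ and $\{\alpha_j, \alpha_{j+1}\}$ are disjoint, and in particular $V(K_i) \cap V(K_j) = \emptyset$. Thus only consecutive Kempe chains can share vertices, and every such shared vertex $w \in V(K_{i-1}) \cap V(K_i)$ has color $\alpha_i$; moreover the edge sets of distinct $K_i$'s are automatically pairwise disjoint.

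For each $i$, fix a path $P_i \subseteq K_i$ from $v_i$ to $v_{i+1}$, along which the colors alternate strictly between $\alpha_i$ and $\alpha_{i+1}$. In the favourable case where the $P_i$'s are internally disjoint, meeting only at the common endpoints $v_j$, the bags $B_i := V(P_i) \setminus \{v_{i+1}\}$ form the desired $(v_1, \dots, v_k)$-rooted $C_k$-model: each is a connected subpath containing $v_i$, the bags are pairwise disjoint by the observation above, and the last edge of $P_i$ provides an edge from $B_i$ to $B_{i+1}$.

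For the general case the plan is to proceed by induction on $\sum_i |V(K_i)|$. The base case is when each $K_i$ is the single edge $v_i v_{i+1}$, where the trivial bags $B_i = \{v_i\}$ work. For the inductive step, one seeks a vertex $w \notin \{v_1, \dots, v_k\}$ lying on some $K_i$ whose removal preserves all the Kempe-chain connectivity requirements; deleting $w$ from $G$ and applying the induction hypothesis then yields the desired model. Such a vertex exists whenever some internal vertex of some $K_i$ is not a cut vertex separating the relevant endpoints in every Kempe chain that contains it.

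The main obstacle is the remaining case, in which there exists $w \in V(K_{i-1}) \cap V(K_i)$ with $w \ne v_i$ that is simultaneously a cut vertex in $K_{i-1}$ (separating $v_{i-1}$ from $v_i$) and in $K_i$ (separating $v_i$ from $v_{i+1}$). Here $w$ cannot be deleted, and one must instead absorb $w$ into the bag $B_i$ together with the subpaths of $K_{i-1}$ and $K_i$ linking $w$ to $v_i$, while the bags $B_{i-1}$ and $B_{i+1}$ are taken to consist of the remaining portions of $K_{i-1}$ and $K_i$ on the far side of $w$. Verifying that this assignment preserves pairwise disjointness and produces the required inter-bag edges calls for a careful case analysis exploiting the bipartite structure of each Kempe chain; this is the most delicate part of the argument.
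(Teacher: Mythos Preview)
The paper does not prove this theorem; it is quoted from Kriesell and Mohr and invoked as a black box in the proof of \cref{c:nbrhood}. There is thus no proof in the paper to compare against, so let me assess your sketch on its own merits.

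Your opening structural observations are correct and are the essential point: Kempe chains $K_i$ and $K_j$ with $|i-j|\geq 2$ (cyclically) are vertex-disjoint because their colour palettes are disjoint, consecutive chains meet only in vertices of the shared colour $\alpha_i$, and the edge sets are pairwise disjoint. The favourable case is also handled correctly.

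The inductive portion, however, is not a proof. Two issues:

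First, the dichotomy ``either a deletable vertex exists, or we are in the obstacle case'' is stated too loosely. The true negation of ``some non-root vertex is deletable'' is that \emph{every} $w\in\bigcup_i V(K_i)\setminus\{v_1,\dots,v_k\}$ is, in every chain $K_j$ containing it, a cut vertex separating $v_j$ from $v_{j+1}$. You would then need to argue that either all the $K_i$ are pairwise internally disjoint (so the favourable case applies) or some such $w$ lies in two consecutive chains; this is true, but you have not said it, and it is what actually justifies restricting attention to your ``obstacle'' configuration.

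Second, and more seriously, the obstacle case is not resolved. You describe how to build three bags $B_{i-1},B_i,B_{i+1}$ locally around the shared cut vertex $w$, but say nothing about the remaining $k-3$ bags, nor how this local construction interacts with the inductive hypothesis. The sentence ``calls for a careful case analysis \dots\ this is the most delicate part of the argument'' is precisely where the proof would have to begin. As written, the proposal identifies the right difficulty but stops short of overcoming it.
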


\section {Proof of \cref{t:extremal}.}\label{s:extremal}
	
In this section we prove \cref{t:extremal}. We assume for a contradiction that there exists a graph $G$ such that 
\begin{itemize}
	\item $G$ is $4$-connected,
	\item $|E(G)| \geq  4|V(G)| - 8,$
	\item $G$ has no $K_7^{\vee}$-minor,
	\item $G$ is not isomorphic to $K_{2,2,2,2}$.
\end{itemize}
For brevity let us call every graph satisfying this property an \emph{enemy}. 

Let $G$ be a minor-minimal enemy. We proceed to establish properties of $G$ and eventually a contradiction via a series of claims.

\begin{claim}\label{c:vertexnumber} $|V(G)| \geq 9$.\end{claim}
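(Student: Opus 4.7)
The plan is to assume for contradiction that $|V(G)| \leq 8$ and deduce that $G$ either has a $K_7^{\vee}$-minor or is isomorphic to $K_{2,2,2,2}$, contradicting that $G$ is an enemy. Combining $|E(G)| \geq 4|V(G)| - 8$ with $|E(G)| \leq \binom{|V(G)|}{2}$ gives $n(n-9) \geq -16$, which fails for $n \leq 6$, so only $n=7$ and $n=8$ need to be considered.

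For $n=7$ the edge hypothesis forces $G \in \{K_7, K^-_7\}$, and since $K_7^{\vee}$ is a spanning subgraph of $K^-_7$ (choose the removed edge of $K^-_7$ to be one of the two removed edges of $K_7^{\vee}$), either case yields a $K_7^{\vee}$-subgraph, hence a $K_7^{\vee}$-minor, a contradiction.

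For $n=8$ the complement $\overline{G}$ has at most $4$ edges. If $\overline{G} \simeq 4K_2$, then $G \simeq K_{2,2,2,2}$, contradicting that $G$ is an enemy. Otherwise I would produce a $K_7^{\vee}$-minor of $G$ via a single vertex deletion or a single edge contraction. The key observation is that for $v \in V(G)$ the $7$-vertex subgraph $G-v$ has non-edge set $\overline{G}-v$, while for an edge $uv \in E(G)$ whose endpoints lie in distinct components of $\overline{G}$ (so that $u,v$ are $G$-adjacent and $N_{\overline{G}}(u) \cap N_{\overline{G}}(v) = \emptyset$), the $7$-vertex minor $G/uv$ has non-edge set $\overline{G} - u - v$. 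Hence it suffices to find either such a $v$, or such a pair $u,v$, for which the resulting non-edges form a subgraph of $P_3$, i.e.\ at most two edges sharing a common endpoint: the corresponding $7$-vertex minor then contains $K_7^{\vee}$ as a spanning subgraph.

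The main obstacle is the finite case check for $\overline{G}$ with $|E(\overline{G})| \leq 4$ and $\overline{G} \not\simeq 4K_2$. When $\Delta(\overline{G}) \geq 3$, deleting a max-degree vertex leaves at most one edge and works immediately. When $\Delta(\overline{G}) \leq 2$, $\overline{G}$ is a disjoint union of paths, cycles, and isolated vertices, and I would verify directly that each of the finitely many such possibilities (namely $C_4$, $K_3 + K_2$, $P_5$, $P_4 + K_2$, $2P_3$, $P_3 + 2K_2$, $3K_2$, together with the appropriate number of isolated vertices) admits either a vertex deletion that leaves a subgraph of $P_3$ (e.g.\ deleting a non-central interior vertex of $P_5$, or any vertex of $C_4$), or a contraction of an edge between two distinct nontrivial components of $\overline{G}$ whose endpoints have no common $\overline{G}$-neighbor, erasing all but at most two edges sharing a common vertex (e.g.\ for $\overline{G} = P_4 + K_2 + 2K_1$ with $P_4 = a$--$b$--$c$--$d$ and $K_2 = ef$, contracting $ae$ leaves the non-edges $\{bc, cd\}$, yielding $K_7^{\vee}$).
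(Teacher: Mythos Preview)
Your approach is correct and broadly parallel to the paper's: both reduce to $n\in\{7,8\}$, handle $n=7$ identically, and for $n=8$ pass to the complement $\overline{G}$ and produce a $7$-vertex minor with at most two non-edges sharing an endpoint, via either a vertex deletion or an edge contraction. The paper splits on whether $\overline{G}$ is a matching (and in the non-matching case finds a single vertex $u$ adjacent to $v$ and all non-neighbours of $v$, then contracts $uv$), whereas you split on $\Delta(\overline{G})\ge 3$ versus $\le 2$ and enumerate. Both routes are short and elementary; yours is a touch more mechanical, the paper's a touch slicker in avoiding the full list.

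One small gap: your enumeration in the $\Delta(\overline{G})\le 2$ case is not exhaustive. You list only the four-edge graphs (other than $4K_2$) together with $3K_2$, but $\overline{G}$ may also have fewer than four edges, e.g.\ $C_3$, $P_4$, $P_3+K_2$, $2K_2$, $P_3$, $K_2$, or no edges at all (each with the appropriate isolated vertices). These omitted cases are of course strictly easier---whenever $\overline{G}$ has a vertex of degree two, deleting it leaves at most one non-edge, and when $\overline{G}$ is a matching of size at most two, deleting any endpoint works---so the fix is a one-line remark, but as written the phrase ``each of the finitely many such possibilities (namely \ldots)'' overstates what you have checked.
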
	
\begin{proof}
As $G$ is $4$-connected, $|V(G)| \geq 5$. As  $\binom{|V(G)|}{2} \geq |E(G)| \geq  4|V(G)| - 8,$ we have $|V(G)| \geq 7$. If $|V(G)|=7$ then $G$ has a $K^-_7$ subgraph and thus a $K_7^{\vee}$ subgraph, a contradiction.

Finally, if $|V(G)|=8$ then  $G$ has at most four non-edges. If these non-edges form a matching then either  $G \simeq K_{2,2,2,2}$ or contracting an edge of $G$ we obtain $K^-_7$ or $K_7$. Thus we assume that $\deg(v) \leq 5$ for some $v \in V(G)$.  There exists  $u \in V(G)$ such that $u$ is adjacent to $v$ and all non-neighbors of $v$. The graph $G / uv$ then has at most two non-edges and contains a   $K_7^{\vee}$ subgraph, unless the non-edges of $G$ are of the form $\{v,v_1\},\{v,v_2\},\{u_1,u_2\},\{u_3,u_4\}$ where $v,v_1,v_2,u_1,\ldots,u_4$ are pairwise distinct. In this last case $G / u_1u_3 \simeq K_7^{\vee}$, a contradiction, implying the claim. 
\end{proof}

\begin{claim}\label{c:no4sep2K4} There exists no non-trivial $4$-separation $(A,B)$ of $G$ such that $G[A]$ and $G[B]$ both have an $(A \cap B)$-rooted $K_4$-minor.
\end{claim}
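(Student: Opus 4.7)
The proof plan is as follows. Assume for contradiction that such a non-trivial $4$-separation $(A,B)$ exists, and set $Z := A \cap B$. Let $H_A$ (resp.\ $H_B$) denote the graph obtained from $G[A]$ (resp.\ $G[B]$) by adding every missing edge among vertices of $Z$, so that $Z$ induces $K_4$ in each of them. The idea is to leverage minor-minimality of $G$ to restrict $H_A$ and $H_B$, and then to combine rooted $K^*_{4,2}$-models supplied by \cref{l:K42star} to exhibit a $K_7^{\vee}$-minor in $G$.

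I would first verify that $H_A$ and $H_B$ are $4$-connected proper minors of $G$: contracting each bag of the hypothesized $Z$-rooted $K_4$-model in $G[B]$ onto its root (and deleting any residual vertices of $B\setminus Z$) exhibits $H_A$ as a proper minor, using the non-triviality of $(A,B)$. By \cref{o:intConnect}(2), $(G[A],Z)$ is internally $4$-connected, and by \cref{o:intConnect}(1), $H_A$ is then $4$-connected; the same holds for $H_B$. As minors of $G$, neither has a $K_7^{\vee}$-minor, so minor-minimality of $G$ forces, for each $H\in\{H_A,H_B\}$, either $|E(H)|\le 4|V(H)|-9$ or $H\simeq K_{2,2,2,2}$ (in which case $|E(H)|=4|V(H)|-8$).

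Via the identity
\[
|E(G)| \;=\; |E(H_A)| + |E(H_B)| + |E(G[Z])| - 12,
\]
together with $|E(G)|\ge 4|V(G)|-8$ and $|E(G[Z])|\le 6$, the possible configurations become highly constrained. In the main subcase, when neither $H_A$ nor $H_B$ equals $K_{2,2,2,2}$, the inequalities pin down $|E(G[Z])|=6$ (so $G[Z]=K_4$) and $|E(G[A])|=4|A|-9$, $|E(G[B])|=4|B|-9$. Then \cref{l:K42star} yields $Z$-rooted $K^*_{4,2}$-models in $G[A]$ and $G[B]$ with clique bags $U_1,U_2\subseteq A\setminus Z$ and $W_1,W_2\subseteq B\setminus Z$, respectively. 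Contracting each bag produces a minor of $G$ on the $8$ vertices $Z\cup\{u_1,u_2,w_1,w_2\}$ in which every pair is adjacent except the four pairs between $\{u_1,u_2\}$ and $\{w_1,w_2\}$ (since no edge of $G$ joins $A\setminus B$ to $B\setminus A$); this minor is $K_8 - K_{2,2}$, and deleting $u_1$ from it leaves $K_7^{\vee}$, contradicting our assumption on $G$.

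The main obstacle will be the boundary subcases where at least one of $H_A,H_B$ equals $K_{2,2,2,2}$. In those cases, the slack in the edge inequality permits $|E(G[Z])|<6$, and \cref{l:K42star} may not apply directly on the side whose underlying graph has fewer edges. However, $K_{2,2,2,2}$ itself admits an explicit $Z$-rooted $K^*_{4,2}$-model for every transversal $Z$---taking singleton root bags $\{z_i\}$ and clique bags $\{a_1,a_3\},\{a_2,a_4\}$, where $\{z_i,a_i\}$ are the four parts---and this model uses no edges among $Z$, so it survives inside $G[A]$ irrespective of $|E(G[Z])|$. I would handle each remaining subcase by repeating the gluing argument above, exploiting the $4$-connectivity of $G$ to route short paths through the vertices not used by the two $K^*_{4,2}$-models in order to supply any missing clique edges on $Z$, and thereby produce a $K_7^{\vee}$-minor of $G$ in every case.
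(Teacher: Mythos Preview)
Your main subcase ($k:=|E(G[Z])|=6$, neither side $K_{2,2,2,2}$) is correct, though it takes a longer route than the paper: you force equality in the edge counts to produce a $K^*_{4,2}$-model on \emph{each} side and glue them into $K_8-K_{2,2}$, whereas the paper observes that a single $Z$-rooted $K^*_{4,2}$-model on one side already yields $K_7^{\vee}$ (the four roots form $K_4$ since $k=6$, the two clique bags complete this to $K_6$, and contracting a component of the other side adds a seventh vertex adjacent to all of $Z$ but to neither clique bag). Hence neither side can have such a model, and \cref{l:K42star} gives $|E(G[X])|\le 4|X|-10$, a sharper bound that forces $|E(G)|\le 4|V(G)|-10$ directly.

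The boundary subcases, however, contain a genuine gap. Your plan is to ``route short paths through the vertices not used by the two $K^*_{4,2}$-models'' to supply the missing $Z$-edges, but on a $K_{2,2,2,2}$ side there are no such vertices: if $Z$ is a transversal of the four parts then every $Z$-rooted $K^*_{4,2}$-model in $K_{2,2,2,2}$ uses all eight vertices, because each $a_j\in A\setminus Z$ is non-adjacent to its partner $z_j$, so no singleton clique bag is adjacent to every root, forcing both clique bags to have size at least two. In the extreme subcase $k=4$ with $H_A\simeq H_B\simeq K_{2,2,2,2}$ you have $|V(G)|=12$ and every vertex lies in some bag; your eight-vertex minor is then $K_8$ minus two $Z$-edges and all four $u_iw_j$ edges, and one checks that no single vertex deletion leaves at most two missing edges sharing an end. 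The paper handles these subcases not by two-sided gluing but by ad~hoc contractions exploiting the explicit structure of $K_{2,2,2,2}$: for instance when $k=5$ with $v_1v_2$ the missing $Z$-edge and $u_i$ the antipode of $v_i$, contracting $u_3v_1$ and $u_4v_2$ recovers the $v_1v_2$ adjacency (via $u_3v_2$ or $u_4v_1$) and leaves a seven-vertex graph with exactly the two non-edges $u_1b,\,u_2b$, where $b$ is a contracted component of $B\setminus Z$. Something of this flavor---a contraction tailored to the location of the missing $Z$-edges inside $K_{2,2,2,2}$---is required; the generic $K^*_{4,2}$ route does not close the argument.
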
	

	\begin{proof} Suppose that such a separation $(A,B)$ exists. Let $k=|E(G[A \cap B])|$. Let $G_A$ and $G_B$ be the graphs obtained from $G[A]$ and $G[B]$, respectively, by adding $6-k$ missing edges of $G[A \cap B]$. Thus $A \cap B$ is a clique in $G_A$ and $G_B$.

By Observation \ref{o:intConnect} both $G_A$ and $G_B$ are $4$-connected. 	
		
As $G_A$ and $G_B$ are proper minors of $G$, neither of them is an enemy by the choice of $G$. 	Thus for $X \in \{A,B\}$ we have $|E(G_X)| \leq 4|X| - 9+\chi_X$ , where  $\chi_X =1$ if $G_X \simeq K_{2,2,2,2}$  and $\chi_X=0$, otherwise.
It follows that   \begin{align*}
	|E(G)| &= |E(G[A])| + |E(G[B])| -k = |E(G_A)|+|E(G_B)| -2(6-k)-k \\
	&\leq 4|A| - 9  + 4|B| - 9 - 12 + k + \chi_A +\chi_B 
	= 4(|V(G)| + 4) - 30 + k  + \chi_A +\chi_B \\
	&\leq 4|V(G)| - 14 + k  + \chi_A +\chi_B,
\end{align*} 
As $E(G) \geq  4|V(G)| - 8,$ it follows that $k+\chi_A +\chi_B \geq 6$ and so \begin{itemize} \item either $k = 6$, or \item $k=5$ and either  $G_A \simeq K_{2,2,2,2}$ or $G_B \simeq K_{2,2,2,2}$
 or \item $k=4$ and  $G_A \simeq K_{2,2,2,2}$ and $G_B \simeq K_{2,2,2,2}$.
 \end{itemize}
 \noindent {\bf Case 1.1: $k=6$.}
 Note that in this case neither $G[A]$ nor $G[B]$ have an $(A \cap B)$-rooted $K^*_{4,2}$-minor.
 Indeed, suppose without loss of generality that $G[A]$ has such a minor. Thus we can contract the edges of $G[A]$ to obtain a $K_6$ with four of the vertices corresponding to $A \cap B$.  Contracting any connected component of $B-A$ to a single vertex and deleting the remaining vertices of $B-A$ we obtain $K_7^{\vee}$ as a minor of $G$, a contradiction.
 
  It follows from \cref{l:K42star} that $|E(G[X])| \leq 4|X|- 10$ for $X \in \{A,B\}$ and so $$
  	|E(G)| = |E(G[A])| + |E(G[B])| -6 
  	\leq 4|A|  + 4|B| - 26 
  = 4|V(G)| - 10,
$$
a contradiction, finishing the argument in this case.

\vskip 5pt \noindent {\bf Case 1.2: $k=5$ and either  $G_A \simeq K_{2,2,2,2}$ or $G_B \simeq K_{2,2,2,2}$.}
 Assume without loss of generality that    $G_A \simeq K_{2,2,2,2}$. Let $A \cap B = \{v_1,v_2,v_3,v_4\}$ where $v_1v_2 \not \in E(G)$ and all the other pairs of vertices in $A \cap B$ are adjacent. Let $A - B = \{u_1,u_2,u_3,u_4\}$, where $v_i$ is the unique non-neighbor of $u_i$ in $G[A]$. Let $G^{+}_A$ be the graph obtained from $G$ by contracting any connected component of $B-A$ to a single vertex and deleting the remaining vertices of $B-A$, as we did in the previous case. Then $G^{+}_A$ is obtained from $G[A]$ by adding a vertex adjacent to all the vertices in  $A \cap B$. Contracting the edges $u_3v_1$ and $u_4v_2$ in $G^{+}_A$ we obtain $K_7^{\vee}$ as a minor of $G$. This contradiction finishes this case.
 
 \vskip 5pt \noindent {\bf Case 1.3: $k=4$, $G_A \simeq K_{2,2,2,2}$ and $G_B \simeq K_{2,2,2,2}$.}
  As in the previous case let $A \cap B = \{v_1,v_2,v_3,v_4\}$, and now let $B-A = \{w_1,w_2,w_3,w_4\}$ where $v_i $ is the unique non-neighbor of $w_i$ in $G[B]$. Assume without loss of generality that $v_3v_4 \in E(G)$. Contracting the edges $v_1w_2,v_2w_1$ and $w_3w_4$, we replace $G[B]$ by a clique of size five, with four vertices corresponding to $A \cap B$. It now follows as in the previous case that $G$ has a $K_7^{\vee}$ minor, again a contradiction, finishing the proof of this case and the claim.
 \end{proof}
		
\begin{claim}\label{c:no4sepNotV} There exists no $4$-separation $(A,B)$ of $G$ such that $|A-B| \geq 2$ and $|B-A| \geq 2$. 
\end{claim}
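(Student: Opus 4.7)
Assume for contradiction that such a $4$-separation $(A,B)$ exists. By \cref{o:intConnect}(2) both $(G[A], A\cap B)$ and $(G[B], A\cap B)$ are internally $4$-connected. \cref{c:no4sep2K4} forbids both $G[A]$ and $G[B]$ from admitting an $(A\cap B)$-rooted $K_4$-model simultaneously; if neither did, \cref{l:K4} would force $|E(G)| \le 3(|A|+|B|)-14 = 3|V(G)|-2 < 4|V(G)|-8$ (using \cref{c:vertexnumber}), contradicting the enemy hypothesis. So, swapping $A$ and $B$ if needed, I may assume $G[A]$ has no $(A\cap B)$-rooted $K_4$-model while $G[B]$ does, and \cref{l:K4} gives $|E(G[A])| \le 3|A|-7$.

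The plan is to bound $|E(G[B])|$ via an auxiliary proper minor of $G$ on vertex set $B$. Since $|A|\ge 6$, \cref{c:RST64} gives $G[A]$ an $(A\cap B)$-rooted $K_4^-$-model with some missing edge $v_iv_j$; let $G_B^-$ be $G[B]$ together with the five $K_4^-$-edges on $A\cap B$ not already in $G$. Contracting the $K_4^-$-model in $G[A]$ onto $A\cap B$ and deleting the residual vertices of $A-B$ realizes $G_B^-$ as a proper minor of $G$, so $G_B^-$ has no $K_7^{\vee}$-minor. Assuming $G_B^-$ is $4$-connected, minor-minimality of $G$ forces either $|E(G_B^-)| \le 4|B|-9$, or $G_B^- \simeq K_{2,2,2,2}$ (in which case $|B|=8$, hence $|V(G)|\ge 10$, and $|E(G_B^-)|=4|B|-8$). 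Letting $f$ be the number of added $K_4^-$-edges already present in $G[A\cap B]$ (so $f\le e:=|E(G[A\cap B])|$), we have $|E(G[B])| = |E(G_B^-)|-(5-f)$. Combining with $|E(G[A])|\le 3|A|-7$, $|A|+|B|=|V(G)|+4$ and $f\le e$ yields
\[
|E(G)| \;=\; |E(G[A])|+|E(G[B])|-e \;\le\; 3|V(G)|+|E(G_B^-)|-3|B|.
\]
In the first alternative $|E(G_B^-)|\le 4|B|-9$ together with $|B|\le|V(G)|-2$ gives $|E(G)|\le 4|V(G)|-11$; in the second $|E(G_B^-)|=24$ and $|B|=8$ give $|E(G)|\le 3|V(G)|$. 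Both strictly violate $|E(G)|\ge 4|V(G)|-8$.

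The main obstacle is the residual case where $G_B^-$ fails to be $4$-connected. Since $G_B^-+v_iv_j$ is $4$-connected by \cref{o:intConnect}(1), any $3$-separator $S$ of $G_B^-$ must separate $v_i$ from $v_j$; as each $v_k$ with $k\notin\{i,j\}$ is adjacent in $G_B^-$ to both $v_i$ and $v_j$, this forces $\{v_k:k\ne i,j\}\subseteq S$ and the third vertex of $S$ to lie in $B-A$, a highly restrictive configuration. I would resolve this either by varying the choice of missing edge $v_iv_j$ among alternative rooted $K_4^-$-models in $G[A]$, or, failing that, by constructing a $K_7^{\vee}$-minor directly: combining a rooted $K^*_{4,2}$-model in $G[B]$ (guaranteed by \cref{l:K42star} once the above edge-count argument fails) with the rooted $K_4^-$-model in $G[A]$ yields a $K_6^-$ on six bags, and contracting a component of $A-B$---which by $4$-connectivity of $G$ is adjacent in the minor to every vertex of $A\cap B$---produces a seventh bag; arranging the missing edges to form $K_7^{\vee}$ then proceeds as in Case~1.1 of the proof of \cref{c:no4sep2K4}.
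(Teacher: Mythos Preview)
Your overall strategy matches the paper's: contract the side \emph{without} the rooted $K_4$-model onto $A\cap B$ using the $K_4^-$-model from \cref{c:RST64}, then invoke minor-minimality of $G$. Your edge count in the case where the contracted graph $G_B^-$ is $4$-connected is correct and essentially the same as the paper's.

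The gap is in the case where $G_B^-$ is not $4$-connected. The device you are missing, and which the paper uses, is an \emph{extremal} choice of the separation: the paper selects $(A,B)$ (with the roles of $A$ and $B$ swapped relative to yours) so that the side lacking a rooted $K_4$-model is as large as possible. This maximality is precisely what makes the non-$4$-connected case go through. When the contracted graph has a $3$-separation $(C,D)$, one obtains new $4$-separations $(C,D\cup B)$ and $(D,C\cup B)$ of $G$; maximality forces the larger side of each to carry a rooted $K_4$-model, whence by \cref{c:no4sep2K4} the smaller side does not, and \cref{l:K4} bounds its edges. Summing over $B$, $C$, $D$ then gives the contradiction.

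Neither of your proposed fallbacks closes the gap. For the first, you have no control over which pair $v_iv_j$ can serve as the missing edge of the $K_4^-$-model, and in general it cannot be varied. For the second, the assertion that \cref{l:K42star} yields a rooted $K^*_{4,2}$-model in $G[B]$ ``once the above edge-count argument fails'' is unjustified: the argument fails because $G_B^-$ is not $4$-connected, which gives no lower bound on $|E(G[B])|$. And even granting the $K^*_{4,2}$-model, your seventh bag is meant to be a component of $A-B$, but the $K_4^-$-model in $G[A]$ may already use all of $A-B$; moreover any such component has no edges to the two non-rooted $K^*_{4,2}$-bags (which lie in $B-A$), so the seventh vertex would be missing at least two edges, not one.
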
	

\begin{proof} Suppose such a separation $(A,B)$ exists. By  \cref{c:no4sep2K4} we may assume that   $G[B]$ has no $(A \cap B)$-rooted $K_4$-minor and choose such a separation with $|B|$ maximum. By \cref{l:K4} we have $|E(G[B])| \leq  3|B| - 7$.
	
	The graph $G[B]$ has an $(A \cap B)$-rooted $K_4^{-}$-minor by Lemma \ref{c:RST64}. Let $G'$ be a minor of $G$ with $V(G')=A$ obtained by contracting $B$ onto $A \cap B$ so that $|E(G'[A \cap B])| \geq 5$. 
	Suppose first that $G'$ is $4$-connected. Then $|E(G')| \leq  4|V(G')| - 8$ as $G'$ is not an enemy. It follows that 
\begin{align*}|E(G)| &= |E(G')|- |E(G'[A \cap B])| + |E(G[B])|\leq (4|A|- 8) - 5 + (3|B|-7)  \\ &= 4(|V(G)|+4)-20 - |B| \leq 4|V(G)|-10,\end{align*}
a contradiction.	
	
Thus there exists a non-trivial $3$-separation $(C,D)$ of $G'$. If $A \cap B \subseteq C$ then $(D, C \cup B)$ is a non-trivial $3$-separation of $G$. Thus there exists $v_1 \in (A \cap B)-C$ and symmetrically there exists $v_2 \in (A \cap B)-D$. Then $\{v_1,v_2\}$ is the unique pair of non-adjacent vertices in $A \cap B$. In particular, the choices of $v_1$ and $v_2$ are unique, and so $|C \cap B | = |D \cap B| =  3$.

Suppose first that $C - (D \cup B) \neq \emptyset$. Then $(C, D \cup B)$ is a non-trivial $4$-separation of $G$. By the choice of the separation $(A,B)$ we have that  $G[D \cup B]$ has an  $(C \cap (D \cup B))$-rooted $K_4$-minor. Thus $G[C]$ has no  $(C \cap (D \cup B))$-rooted $K_4$-minor  by \cref{c:no4sep2K4} and so $|E(G[C])| \leq 3 |C| - 7$ by \cref{l:K4}. 
Similarly, assuming  $D - (C \cup B) \neq \emptyset$ we have $|E(G[C])| \leq 3 |D| - 7$.

Under these assumptions 
we have \begin{align*}|E(G)| &\leq |E(G[B])|+|E(G[C])|+|E(G[D])| \\ &\leq 3|B|+3|C|+3|D|-21 = 3(|V(G)|+7)-21 \leq 4|V(G)|-9,\end{align*}
where the last inequality holds by \cref{c:vertexnumber}.
This contradicts the choice of $G$.

Thus we suppose without loss of generality that   $C - (D \cup B) = \emptyset$. Then  $D - (C \cup B) \neq \emptyset$ as $|B -A| \geq 2$, and we have 
 \begin{align*}|E(G)| &\leq  |E(G[B])|+|E(G[D])|+1 \leq 3|B|+3|D|-13 \\ & = 3(|V(G)|+3)-13 =  3|V(G)|-4 \leq 4|V(G)|-9,\end{align*}
again a contradiction.
\end{proof}

\begin{claim}\label{c:K2222}
For every $uv \in E(G)$  
we have	$G / uv \not \simeq K_{2,2,2,2}$ 
\end{claim}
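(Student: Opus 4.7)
Plan: Suppose for contradiction that $G/uv \simeq K_{2,2,2,2}$ for some edge $uv \in E(G)$. Since $|V(K_{2,2,2,2})|=8$, we have $|V(G)|=9$, so $|E(G)| \geq 28$. Let $w$ be the contracted vertex in $G/uv$, let $w'$ be its unique non-neighbor in $K_{2,2,2,2}$, and label the remaining six vertices $x_1,\ldots,x_6$. Since the contraction does not affect the subgraph induced by $\{w', x_1, \ldots, x_6\}$, in $G$ we have: $w'$ is adjacent to every $x_i$; the three non-edges of $K_{2,2,2,2}$ among $\{x_1,\ldots,x_6\}$ form a perfect matching $M = \{P_1,P_2,P_3\}$ of non-edges; $w'$ is non-adjacent to both $u$ and $v$; and $\{x_1, \ldots, x_6\} \subseteq N(u) \cup N(v)$. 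Setting $S = N(u) \cap N(v) \cap \{x_1, \ldots, x_6\}$, $S_u = (N(u) \setminus N(v)) \cap \{x_1,\ldots,x_6\}$, and $S_v = (N(v) \setminus N(u)) \cap \{x_1,\ldots,x_6\}$, a direct edge count gives $|E(G)| = 25 + |S|$, forcing $|S| \geq 3$. The 4-connectivity of $G$ additionally implies $|S_u|+|S| \geq 3$ and $|S_v|+|S| \geq 3$.

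My goal is to construct a $K_7^{\vee}$-minor of $G$ using seven branch sets, contradicting the assumption that $G$ has no such minor. The key observation is that the vertices $w'$, $u$, and $v$ each provide a \emph{repair} bag eliminating non-edges of $G$ in the minor: (a) a bag $\{w', x_i\}$ is adjacent (in $G$) to every other $x_j$ through $w'$, repairing the matching non-edge incident with $x_i$; (b) a bag $\{u, y\}$ with $y \in N(u) \cap \{x_1,\ldots,x_6\}$ is adjacent to $v$ through $uv$, and to each $x_j$ adjacent to $u$ or to $y$; (c) symmetrically for $\{v, y\}$ with $y \in N(v) \cap \{x_1,\ldots,x_6\}$.

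When $|S|=6$ we have $S_u = S_v = \emptyset$; labeling $M=\{\{x_1,x_2\},\{x_3,x_4\},\{x_5,x_6\}\}$, the branch sets $\{u\}, \{v\}, \{x_1\}, \{x_2\}, \{x_3\}, \{x_6\}, \{w', x_5\}$ (dropping $x_4$) have $\{x_1, x_2\}$ as the unique non-edge, giving a $K_7^-$-minor, which contains a $K_7^{\vee}$-minor. When $|S|\leq 5$, I use two size-$2$ branch sets together with five singletons, covering all nine vertices of $G$. The two merged bags are chosen from types (a)--(c), depending on $(|S_u|, |S_v|)$ and the alignment of $M$ with the partition $(S, S_u, S_v)$, so as to repair two of the three matching pairs together with all non-edges of the form $\{v, a\}$ ($a \in S_u$) or $\{u, b\}$ ($b \in S_v$) that would otherwise appear in the minor. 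This leaves at most two non-edges in the seven-vertex minor, and one ensures that if exactly two remain they share a common endpoint, so the minor is either $K_7^-$ or $K_7^{\vee}$.

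The main obstacle is the subcase analysis when $|S|\leq 5$. Care is required because some natural merges fail to repair all non-edges: for instance, if $a \in S_u$ is matched to $b \in S_v$ in $M$, then the bag $\{u, a\}$ is \emph{not} adjacent to the singleton $\{b\}$, since both $ub$ and $ab$ are non-edges. Navigating this requires selecting the pair of size-$2$ bags carefully; however, the flexibility of having the three repair types (a)--(c), the degree constraints $|S_u|+|S|\geq 3$ and $|S_v|+|S|\geq 3$, and the symmetry $u \leftrightarrow v$ guarantee that a working configuration exists in every subcase. A representative calculation, e.g. in the case $|S|=4,\, |S_u|=|S_v|=1$ with $P_1=\{a,b\},\ P_2=\{c_1,c_2\},\ P_3=\{c_3,c_4\}$ where $a \in S_u,\ b \in S_v,\ S=\{c_1,c_2,c_3,c_4\}$, is that the branch sets $\{u, c_2\}, \{v, c_4\}, \{w'\}, \{a\}, \{b\}, \{c_1\}, \{c_3\}$ leave only $\{a, b\}$ as a non-edge, producing the desired $K_7^{\vee}$-minor.
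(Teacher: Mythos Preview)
Your setup is correct and matches the paper's: $|V(G)|=9$, $|E(G)|=25+|S|\geq 28$, hence $|S|\geq 3$, and the non-edges of $G$ are $uw',\,vw'$, the pairs in $M$, plus $va$ for $a\in S_u$ and $ub$ for $b\in S_v$. Your idea of building a $K_7^{\vee}$-minor from two size-$2$ ``repair'' bags and five singletons is also sound in most configurations, and your representative calculation is valid.

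However, the restriction to repair bags of types (a)--(c) --- i.e.\ each size-$2$ bag containing one of $w',u,v$ --- is too strong. It fails in the case $|S|=3$, $S_v=\emptyset$ (so $\deg(v)=4$), with matching $M=\{\{a_1,c_1\},\{a_2,c_2\},\{a_3,c_3\}\}$ where $S_u=\{a_1,a_2,a_3\}$ and $S=\{c_1,c_2,c_3\}$. Two disjoint bags of types (a)--(c) use exactly two of $u,v,w'$, leaving the third as a singleton. A short check of the three possibilities shows each yields at least two non-adjacent bag-pairs that do \emph{not} share a vertex, so one only obtains $K_7^{=}$, not $K_7^{\vee}$. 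For instance, with $\{v,c_1\}$ and $\{w',a_1\}$ the surviving non-edges are $\{a_2\}$--$\{c_2\}$ and $\{a_3\}$--$\{c_3\}$; with $\{u,y\}$ and $\{v,c_j\}$ one always keeps the non-edge $\{v,c_j\}$--$\{a_j\}$ together with a disjoint matching pair; and with $v$ a singleton at least one $a_k$ remains a singleton non-adjacent to $\{v\}$, in addition to a disjoint matching pair.

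This is exactly the case the paper isolates as its ``second case'' and handles by a merge outside your toolkit: contracting $a_1a_2$ and $a_3w'$ (in your notation) yields $K_7^{\vee}$ on the nose, since the only remaining non-edges are $\{v\}$--$\{a_1,a_2\}$ and $\{v\}$--$\{a_3,w'\}$, both incident to $v$. To repair your argument you should allow, in this one configuration, a bag of the form $\{a_i,a_j\}\subseteq S_u$; everything else goes through as you outlined.
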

\begin{proof} Suppose that 	$G / uv  \simeq K_{2,2,2,2}$. Then $|V(G)| =9$ and so $|E(G)| \geq 28$. 
As  $$24 = |E(G / uv)| = |E(G)| - 1 - |N(u) \cap N(v)|  \geq 27 - |N(u) \cap N(v)|,$$
it follows that   $|N(u) \cap N(v)| \geq 3$. Let $V(G)=\{u,v,u_1,v_1,u_2,v_2,u_3,v_3,w\}$, where $u_iv_i$ for $i=1,2,3$ are the only non-edges in $G \setminus u \setminus v$. By symmetry, we may assume that $\{u_1,u_2,u_3\}  \subseteq N(u) \cap N(v)$ or $\{u_1,v_1,u_2\} \subseteq N(u) \cap N(v)$. In both cases  we may further assume by symmetry that $uv_1,vv_2 \in E(G)$, or $N(v) = \{u,u_1,u_2,u_3\}$ and $N(u) = V(G) - w$. In the first case,  
$G /uu_1/vv_2$ has a $K^-_7$ subgraph, a contradiction. In the second case $G/v_1v_2/v_3w \simeq K^{\vee}_7$, again a contradiction.
\end{proof}	
	
\begin{claim}\label{c:degree4} Let $uv \in E(G)$ be such that $\deg(v) = 4$. Then there exists $w \in V(G)$ such that $\deg(w)=4$ and $vw, uw \in E(G)$. \end{claim}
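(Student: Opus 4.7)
The plan is to study the graph $G/uv$ and exploit the fact that $G$ is a minor-minimal enemy. Since $G/uv$ is a proper minor of $G$, it is not an enemy, so at least one of the four defining properties fails, and I will rule out three of those failure modes to conclude that $G/uv$ is not $4$-connected.

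First I would observe that $G/uv$ has no $K_7^\vee$-minor (inherited from $G$) and is not isomorphic to $K_{2,2,2,2}$ by \cref{c:K2222}. The key quantitative step is the edge bound: since $\deg(v) = 4$, any common neighbor of $u$ and $v$ lies in $N(v)\setminus\{u\}$, giving $|N(u) \cap N(v)| \leq 3$ and hence
\[ |E(G/uv)| = |E(G)| - 1 - |N(u) \cap N(v)| \geq |E(G)| - 4 \geq 4|V(G)| - 12 = 4|V(G/uv)| - 8. \]
Thus $G/uv$ must fail to be $4$-connected, and since $|V(G/uv)| \geq 8$ by \cref{c:vertexnumber}, this failure yields a non-trivial separation $(A',B')$ of $G/uv$ of order at most three.

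Next I would lift $(A',B')$ to a separation of $G$. Let $z$ denote the vertex of $G/uv$ obtained by contracting $uv$. If $z \notin A' \cap B'$, then expanding $z$ back to $\{u,v\}$ on its side produces a non-trivial separation of $G$ of order at most three, contradicting $4$-connectivity of $G$. So $z \in A' \cap B'$, and splitting $z$ back into $\{u,v\}$ on both sides yields a non-trivial separation $(A,B)$ of $G$ with $u,v \in A \cap B$ and $|A \cap B| \leq 4$; the order must in fact equal four by $4$-connectivity.

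Finally I would invoke \cref{c:no4sepNotV} on $(A,B)$: one of the two sides, say $B \setminus A$, consists of a single vertex $w$. Then $N(w) \subseteq A \cap B$, so $\deg(w) \leq |A \cap B| = 4$, and the $4$-connectivity of $G$ forces $\deg(w) = 4$ and $N(w) = A \cap B$, which contains both $u$ and $v$. I do not anticipate any serious obstacle here; the only subtle point is the careful translation between separations of $G/uv$ and of $G$, and the verification that the degree-four hypothesis on $v$ is exactly what preserves the edge-density bound after contraction.
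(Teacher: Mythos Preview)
Your proposal is correct and follows essentially the same argument as the paper: contract $uv$, use the edge count together with \cref{c:K2222} and minimality to conclude that $G/uv$ is not $4$-connected, lift the resulting small separation to a non-trivial $4$-separation of $G$ with $u,v\in A\cap B$, and then apply \cref{c:no4sepNotV} to find the vertex $w$. Your write-up is in fact slightly more explicit than the paper's in justifying the bound $|N(u)\cap N(v)|\le 3$ and in handling the lift of the separation through the contracted vertex.
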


\begin{proof} 
	Let $G'= G / uv$. As  $$|E(G')| \geq |E(G)| -4  \geq 4|V(G)|-12 =  4|V(G')|-8,$$  $G' \not \simeq K_{2,2,2,2}$ by \cref{c:K2222}, and $G'$ is not an enemy by the choice of $G$, we conclude that $G'$ is not $4$-connected. Let $(A',B')$ be a non-trivial $3$-separation of $G'$. It corresponds to a non-trivial $4$-separation $(A,B)$ of $G$ such that $u,v \in A \cap B$. By \cref{c:no4sepNotV} we may assume $A-B = \{w\}$. Then the only neighbors of $w$ lie in $A \cap B$. As $G$ is $4$-connected, it follows that every vertex of $A \cap B$ is adjacent to $w$ and the claim holds.
\end{proof}	 

\begin{claim}\label{c:5connected} $G$ is $5$-connected.\end{claim}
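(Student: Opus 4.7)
The plan is to suppose for contradiction that $G$ is not $5$-connected; then $G$ has a non-trivial $4$-separation $(X,Y)$, and \cref{c:no4sepNotV} lets me assume $Y - X = \{v\}$ for a single vertex $v$. By $4$-connectivity of $G$ this forces $N(v) = S := X \cap Y$ and $\deg(v) = 4$, and $(G[X], S)$ is internally $4$-connected by \cref{o:intConnect}(2). From here I split on whether $G[S]$ contains a triangle.

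If some three vertices $s_i, s_j, s_k \in S$ induce a triangle, then I can exhibit an $S$-rooted $K_4$-model in $G[Y]$ using bags $\{s_i\}, \{s_j\}, \{s_k\}, \{v, s_\ell\}$ (where $s_\ell$ is the fourth vertex of $S$, and $v$ supplies all edges from $s_\ell$ to the other three). \cref{c:no4sep2K4} then prohibits an $S$-rooted $K_4$-model in $G[X]$, so \cref{l:K4} yields $|E(G[X])| \le 3|X| - 7$. Since $|E(G)| = |E(G[X])| + 4$, this gives $|E(G)| \le 3|V(G)| - 6$, contradicting $|E(G)| \ge 4|V(G)| - 8$ in view of $|V(G)| \ge 9$ from \cref{c:vertexnumber}.

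Otherwise $G[S]$ is triangle-free. Applying \cref{c:degree4} to each edge $vs$ with $s \in S$ produces a degree-$4$ vertex adjacent to both $v$ and $s$, which must lie in $N(v) = S$; hence every vertex of $S$ has a neighbor in $G[S]$, and triangle-freeness forces $G[S] \in \{K_{1,3}, P_4, K_{2,2}, 2K_2\}$. My plan is to handle each subcase by using \cref{c:degree4} to identify degree-$4$ vertices inside $S$ with strongly constrained neighborhoods, and then to exhibit a small set $T \subseteq \{v\} \cup S$ whose external neighborhood $N(T) \setminus T$ has size at most $4$: if $|N(T) \setminus T| \le 3$ this is a cut contradicting $4$-connectivity, and if $|N(T) \setminus T| = 4$ it is a $4$-separation with both sides of size at least two, contradicting \cref{c:no4sepNotV}. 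In the $K_{1,3}$ subcase, the center $c$ is forced to satisfy $\deg(c) = 4$ and $N(c) = \{v\} \cup \text{leaves}$, so the three leaves form a $3$-cut separating $\{v, c\}$ from the nonempty remainder. In the $K_{2,2}$ and $P_4$ subcases \cref{c:degree4} gives two degree-$4$ vertices $s_i, s_j$ of $S$, and $T := \{v, s_i, s_j\}$ has external neighborhood of size at most $4$ by inspection.

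The main obstacle will be the $2K_2$ subcase, where all four vertices of $S$ have degree $4$ in $G$ and the pairwise analyses only yield that the four $2$-sets $N(s_i) \setminus (S \cup \{v\})$ are pairwise distinct. I plan to close this subcase by combining the external-neighborhood argument applied to $T' := \{v\} \cup S$ (which yields a forbidden $4$-separation whenever $|N(T') \setminus T'| \le 4$ and $|V(G)|$ is large enough) with a degree-sum computation: since the five vertices of $\{v\} \cup S$ contribute $20$ to the degree sum while accounting for only $6$ internal and $8$ external edges, the subgraph induced on $V(G) \setminus (S \cup \{v\})$ must carry at least $|E(G)| - 14 \ge 4|V(G)| - 22$ edges, and for the small values of $|V(G)|$ not already handled by the $T'$ argument this exceeds the binomial bound $\binom{|V(G)|-5}{2}$, yielding the final contradiction.
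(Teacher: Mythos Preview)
Your triangle case and the $K_{1,3}$, $P_4$, $K_{2,2}$ subcases are all correct as written. The gap is in the $2K_2$ subcase.

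There you set $T' = \{v\} \cup S$ and rely on two weapons: (a) if $|N(T') \setminus T'| \le 4$ and $|V(G)|$ is large enough, the pair $(T' \cup N(T'),\, V(G) \setminus T')$ violates \cref{c:no4sepNotV}; and (b) the edge count $|E(G[V(G)\setminus T'])| \ge 4|V(G)|-22$ exceeds $\binom{|V(G)|-5}{2}$ for small $|V(G)|$. But (b) only gives a contradiction when $|V(G)| \le 13$ (e.g.\ at $|V(G)|=14$ you get $34 \le 36=\binom{9}{2}$), while (a) needs the unproved hypothesis $|N(T')\setminus T'|\le 4$. The pairwise-distinctness of the four $2$-sets $N(s_i)\setminus(S\cup\{v\})$ only forces $|N(T')\setminus T'|\ge 4$, not $\le 4$; nothing you have written rules out, say, $|V(G)|=20$ with the eight external edges landing on five or more distinct vertices. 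So the case $|V(G)|\ge 14$ with $|N(T')\setminus T'|\ge 5$ is uncovered.

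The paper sidesteps your entire case split on $G[S]$ with a single short move. Since not every vertex of $G$ has degree four, there is an edge $uv$ with $\deg(v)=4<\deg(u)$. Applying \cref{c:degree4} to $uv$ yields $w$ with $\deg(w)=4$ and $vw,uw\in E(G)$; applying it again to $vw$ yields $w'$ with $\deg(w')=4$ and $vw',ww'\in E(G)$. Since $\deg(u)>4=\deg(w')$ we have $u\neq w'$, so $u,w'$ are two distinct common neighbours of $v$ and $w$, giving $|(N(v)\cup N(w))\setminus\{v,w\}|\le 4$. Then $(N(v)\cup N(w),\,V(G)\setminus\{v,w\})$ is a $4$-separation with both sides of size at least two, contradicting \cref{c:no4sepNotV}. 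This same idea would patch your $2K_2$ gap: one of the external neighbours of $S$ must have degree exceeding four, and the argument runs from there.
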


\begin{proof} By \cref{c:no4sepNotV} it suffices to show that $G$ has no vertices of degree four. Suppose for a contradiction that this is false. As $|E(G)| = 4|V(G)| - 8 > 2|V(G)|$, $G$ also has vertices of degree larger than four.
	
As $G$ is connected, it follows that there exists $uv \in E(G)$ be such that $\deg(v) = 4$, $\deg(u) > 4$. By \cref{c:degree4} there exists 	$w \in V(G)$ such that $\deg(w)=4$ and $vw, uw \in E(G)$. By \cref{c:degree4} applied to the edge $vw$ there exists $w' \in V(G)$ such that 
$\deg(w')=4$, $vw',ww'\in E(G)$. Thus $(N(v) \cup N(w), V(G) - u - w)$ is a $4$-separation of $G$, contradicting \cref{c:no4sepNotV}.
\end{proof}

\begin{claim}\label{c:triangles} $|N(u) \cap N(v)| \geq 4$ for every $uv \in E(G)$.
\end{claim}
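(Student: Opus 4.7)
Suppose for contradiction that $uv \in E(G)$ satisfies $|N(u) \cap N(v)| \leq 3$. The plan is to derive a contradiction by examining $G' = G/uv$ and showing it would be an enemy, violating minor-minimality of $G$.

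First, I would count edges: every common neighbor of $u$ and $v$ in $G$ contributes a pair of parallel edges that becomes a single edge in $G'$, so
\[
|E(G')| = |E(G)| - 1 - |N(u) \cap N(v)| \geq (4|V(G)| - 8) - 4 = 4|V(G')| - 8.
\]
Thus $G'$ satisfies the edge bound required of an enemy. Moreover, $G'$ has no $K_7^{\vee}$-minor (being a minor of $G$), and by \cref{c:K2222} we have $G' \not\simeq K_{2,2,2,2}$. So the only property of an enemy that $G'$ could fail is $4$-connectivity, and by minor-minimality of $G$, it must indeed fail: there is a non-trivial separation $(A', B')$ of $G'$ of order at most $3$.

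Next I would lift this separation back to $G$. Let $z$ denote the vertex of $G'$ obtained by contracting $uv$. If $z \in A' \cap B'$, then $(A' \cup \{u,v\} \setminus \{z\}, B' \cup \{u,v\} \setminus \{z\})$ is a non-trivial separation of $G$ of order $|A' \cap B'| + 1 \leq 4$. If instead $z$ lies in only one side of the separation, say $z \in A' \setminus B'$, then $((A' \setminus \{z\}) \cup \{u,v\}, B')$ is a non-trivial separation of $G$ of order $|A' \cap B'| \leq 3$. In either case we obtain a non-trivial separation of $G$ of order at most $4$, which contradicts the $5$-connectivity of $G$ established in \cref{c:5connected}. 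This contradiction shows that $|N(u) \cap N(v)| \geq 4$, as required.

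The argument is short because all the heavy lifting has already been done: \cref{c:K2222} eliminates the $K_{2,2,2,2}$ exception, minor-minimality kills the $4$-connected case, and \cref{c:5connected} kills the non-$4$-connected case via the lifted separation. There is no real obstacle; the only point requiring care is the case analysis on whether the identified vertex $z$ lies in the cut of the separation of $G'$, which governs whether the lifted separation of $G$ has order $3$ or $4$.
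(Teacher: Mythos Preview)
Your proof is correct and follows essentially the same approach as the paper. The paper phrases it directly rather than by contradiction --- it simply notes that $G' = G/uv$ is $4$-connected by \cref{c:5connected}, hence (using \cref{c:K2222} and minor-minimality) $|E(G')| \leq 4|V(G')| - 9$, from which $|N(u)\cap N(v)| \geq 4$ falls out of the edge count --- but this is the same argument, with your separation-lifting step being exactly what underlies the paper's one-line invocation of $5$-connectivity.
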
	

\begin{proof}
Let $G'= G / uv$. The graph $G'$ is $4$-connected by \cref{c:5connected}. We have \begin{equation}\label{e:eg'} |E(G')|  + 1 + |N(u) \cap N(v)| = |E(G)| \geq 4|V(G)|-8  = 4 |V(G')|-4, \end{equation} and $G' \not \simeq K_{2,2,2,2}$ by \cref{c:K2222}.
As $G'$ is not an enemy, it follows that $|E(G')| \leq 4|V(G')| - 9$, which by \eqref{e:eg'} implies the claim. 
\end{proof} 	

\begin{claim}\label{c:edgenumber} $|E(G)| = 4|V(G)|-8$.
 \end{claim}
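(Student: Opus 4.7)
The plan is to verify the inequality $|E(G)| \leq 4|V(G)|-8$ by a short edge-deletion argument, which combined with the enemy hypothesis $|E(G)| \geq 4|V(G)|-8$ gives equality. The key observation is that we now have enough structural information about $G$ to delete an edge without destroying any of the four enemy properties.

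Suppose for contradiction that $|E(G)| \geq 4|V(G)|-7$, and pick any edge $e \in E(G)$. Let $G' = G\setminus e$. I will check that $G'$ is itself an enemy, contradicting the choice of $G$ as a minor-minimal enemy. The edge count is immediate:
\[
|E(G')| = |E(G)|-1 \geq 4|V(G)|-8 = 4|V(G')|-8.
\]
$G'$ has no $K_7^{\vee}$-minor because $G$ does not. Since \cref{c:5connected} guarantees that $G$ is $5$-connected, deleting a single edge leaves $G'$ at least $4$-connected. Finally, $G' \not\simeq K_{2,2,2,2}$ because $|V(G')| = |V(G)| \geq 9$ by \cref{c:vertexnumber}, whereas $K_{2,2,2,2}$ has only $8$ vertices. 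Thus $G'$ satisfies all four defining properties of an enemy.

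But $G'$ is a proper minor of $G$, contradicting the minor-minimality of $G$. Therefore $|E(G)| \leq 4|V(G)|-8$, and combined with the enemy hypothesis the equality $|E(G)| = 4|V(G)|-8$ follows. No step here is genuinely hard; the only subtlety is having all prior claims in hand (particularly $5$-connectivity from \cref{c:5connected} and the lower bound on $|V(G)|$ from \cref{c:vertexnumber}) so that an arbitrary edge deletion preserves enemyhood.
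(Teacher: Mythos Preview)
Your proof is correct and essentially identical to the paper's own argument: both suppose $|E(G)| > 4|V(G)|-8$, delete an arbitrary edge, and verify that the resulting graph remains an enemy using \cref{c:5connected} for $4$-connectivity and \cref{c:vertexnumber} to rule out $K_{2,2,2,2}$. The only difference is that you spell out the four enemy conditions a bit more explicitly.
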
	
 \begin{proof}
 Suppose for a contradiction that $|E(G)| > 4 |V(G)|-8$ and let $e \in E(G)$ be arbitrary. The graph $G - e$ is $4$-connected by \cref{c:5connected},  $|E(G-e)| \geq  4 |V(G-e)|-8,$ and $G - e \not \simeq K_{2,2,2,2}$ by \cref{c:vertexnumber}. Thus $G-e$ is an enemy, contradicting the choice of $G$.
\end{proof} 	

Let $v$ be a vertex of $G$ with minimum degree.
 
\begin{claim}\label{c:mindegree}  $5 \leq \deg(v)  \leq 7$.  
\end{claim}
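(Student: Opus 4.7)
The claim asserts two bounds on the minimum degree of $G$, and both follow immediately from the properties already established.

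For the lower bound, by \cref{c:5connected}, $G$ is $5$-connected, so every vertex of $G$ has degree at least $5$; in particular $\deg(v) \geq 5$.

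For the upper bound, I will use the handshake lemma together with the edge count from \cref{c:edgenumber}. Since $v$ has minimum degree,
\[
\deg(v) \leq \frac{2|E(G)|}{|V(G)|} = \frac{8|V(G)| - 16}{|V(G)|} = 8 - \frac{16}{|V(G)|}.
\]
By \cref{c:vertexnumber}, $|V(G)| \geq 9$, so $16/|V(G)| < 2$ but $16/|V(G)| > 0$, which gives $\deg(v) < 8$. Since $\deg(v)$ is an integer, $\deg(v) \leq 7$.

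There is no real obstacle here; the claim is purely a numerical consequence of the $5$-connectivity (\cref{c:5connected}), the exact edge count $|E(G)| = 4|V(G)| - 8$ (\cref{c:edgenumber}), and the lower bound $|V(G)| \geq 9$ (\cref{c:vertexnumber}). The claim will be used in subsequent arguments (presumably a case analysis on $\deg(v) \in \{5,6,7\}$) to extract structural information about low-degree vertices and their neighborhoods, using \cref{c:triangles} to guarantee many common neighbors along each edge incident to $v$.
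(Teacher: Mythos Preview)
Your proof is correct and matches the paper's approach: the lower bound comes from \cref{c:5connected} and the upper bound from the average-degree computation via \cref{c:edgenumber}. The appeal to \cref{c:vertexnumber} is harmless but unnecessary, since $16/|V(G)| > 0$ already forces $\deg(v) < 8$.
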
	

\begin{proof}
We have	$\deg(v) \geq 5$ by \cref{c:5connected}, and $\deg(v) \leq 7$ by \cref{c:edgenumber} .
\end{proof}

 \begin{claim}\label{c:degree5}  $ \deg(v) \neq 5$.  
 \end{claim}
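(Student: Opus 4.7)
The plan is to derive a contradiction by producing a $K_7^{-}$-minor (hence a $K_7^{\vee}$-minor) under the assumption $\deg(v) = 5$. I expect no serious obstacle: the real work is done by \cref{c:triangles}, which pigeonholes the neighborhood of a degree-$5$ vertex into a clique, and by \cref{c:5connected}, which then delivers the seventh bag with all five desired attachment points. Once these are in hand, the model essentially writes itself.

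First, I would observe that the triangle condition \cref{c:triangles} is extremely tight when $\deg(v) = 5$. Writing $N(v) = \{v_1, \dots, v_5\}$, for each $i$ the condition $|N(v) \cap N(v_i)| \geq 4$ applied to the edge $vv_i$ forces $v_i$ to be adjacent to every other $v_j$, since $N(v) \cap N(v_i) \subseteq N(v) \setminus \{v_i\}$ and the latter set has exactly four elements. Hence $G[N[v]] \simeq K_6$.

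Next I would locate a connected subgraph outside $N[v]$ whose vertex set sees all of $N(v)$. Since $|V(G)| \geq 9 > 6 = |N[v]|$ by \cref{c:vertexnumber}, the set $V(G) \setminus N[v]$ is nonempty, so we may pick the vertex set $C$ of a connected component of $G - N[v]$. Let $N_G(C)$ denote the vertices in $V(G) \setminus C$ adjacent to some vertex of $C$. Every element of $N_G(C)$ lies in $N(v)$, because $v$ itself has no neighbors outside $N(v)$ and distinct components of $G - N[v]$ are non-adjacent. Moreover, $N_G(C)$ separates $C$ from $v$ in $G$, so by \cref{c:5connected} we have $|N_G(C)| \geq 5$; combined with $|N_G(C)| \leq |N(v)| = 5$ this forces $N_G(C) = N(v)$.

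Finally, the bags $\{v\}, \{v_1\}, \dots, \{v_5\}, C$ are pairwise disjoint and each induces a connected subgraph of $G$. The $K_6$ on $N[v]$ yields every edge among the singleton bags, and each $v_i \in N_G(C)$ yields an edge from $\{v_i\}$ to the bag $C$. The only pair of bags without a guaranteed connecting edge is $\{v\}$ and $C$, so this is a $K_7^{-}$-model in $G$. Since $K_7^{\vee}$ is a subgraph of $K_7^{-}$, we obtain a $K_7^{\vee}$-minor of $G$, contradicting the choice of $G$.
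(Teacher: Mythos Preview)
Your proof is correct and follows essentially the same approach as the paper's: use \cref{c:triangles} to force $N[v]$ to be a clique, \cref{c:vertexnumber} to guarantee a component $C$ of $G\setminus N[v]$, and \cref{c:5connected} to ensure every vertex of $N(v)$ has a neighbor in $C$, yielding a $K_7^{-}$-minor. The only difference is that you spell out the pigeonhole for $N[v]\simeq K_6$ and the separation argument for $N_G(C)=N(v)$ in more detail than the paper does.
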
	
 
\begin{proof}	
 Suppose that $\deg(v)=5$.
 By \cref{c:triangles},  $N[v]$ is a clique in $G$. Let $C$ be a component of $G \setminus N[v]$.  (It exists by \cref{c:vertexnumber}.)	 By \cref{c:5connected}, every vertex of $N(v)$ has a neighbor in $C$. By contracting $C$ to a single vertex and deleting the remaining components of  $G \setminus N[v]$ we obtain $K^-_7$ as a minor of $G$, a contradiction.
\end{proof}	

\begin{claim}\label{c:degree6}  $ \deg(v) = 6$.  
\end{claim}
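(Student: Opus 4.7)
The plan is to argue by contradiction: I will show that $\deg(v) = 7$ forces a $K_7^{\vee}$-minor of $G$. By \cref{c:triangles}, every $u \in N(v)$ has at least four neighbors in $N(v) \setminus \{u\}$, so the complement graph $H := \overline{G[N(v)]}$, on seven vertices, has maximum degree at most $2$, i.e. it is a disjoint union of paths and cycles.

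The easy subcase is $\omega(G[N(v)]) \geq 5$. Let $S \subseteq N(v)$ be a clique of size $5$ and pick any $u \in N(v) \setminus S$. Since $u$ has at most two non-neighbors in $N(v) \setminus \{u\}$, it has at most two non-neighbors in $S$, and $uv \in E(G)$. Hence $G[\{v\} \cup S \cup \{u\}]$ is $K_7$ minus at most two edges, both incident to $u$, and so contains $K_7^{\vee}$ as a subgraph, a contradiction.

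For the remaining case $\omega(G[N(v)]) \leq 4$, the strategy is to apply \cref{l:K42star} to a pair $(G, Z)$ with $Z \subseteq N(v)$, $|Z| = 4$, chosen so that $G[Z]$ is as dense as possible --- ideally a $K_4$. The pair $(G, Z)$ is internally $4$-connected by \cref{c:5connected} and \cref{o:intConnect}(1), and $|E(G)| = 4|V(G)| - 8 > 4|V(G)| - 10$ by \cref{c:edgenumber}, so \cref{l:K42star} yields a $Z$-rooted $K^*_{4,2}$-model in $G$ with bags $\alpha(u_1), \ldots, \alpha(u_4), \alpha(b_1), \alpha(b_2)$. If $G[Z] \simeq K_4$ and $v$ is not contained in any bag of the model, then the seven branch sets $\{v\}, \alpha(u_1), \ldots, \alpha(u_4), \alpha(b_1), \alpha(b_2)$ are pairwise adjacent except possibly the pairs $(\{v\}, \alpha(b_1))$ and $(\{v\}, \alpha(b_2))$, which exhibits a $K_7^{\vee}$-minor.

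The main obstacle is the subcase $\omega(G[N(v)]) = 3$, where no $Z \subseteq N(v)$ with $G[Z] \simeq K_4$ exists. An enumeration using $\Delta(H) \leq 2$ and $\alpha(H) = 3$ reduces $H$ to one of a short list of explicit graphs (for instance $C_7$, $C_3 \cup C_4$, $C_3 \cup P_4$, or $2C_3 \cup P_1$). I plan to handle each configuration directly, using a vertex $w \in V(G) \setminus N[v]$, which exists by \cref{c:vertexnumber}, together with the $5$-connectivity of $G$: by \cref{l:RST24} or \cref{t:KM2019}, one obtains disjoint paths or Kempe chains through the outside that can be contracted to identify a non-adjacent pair in $N(v)$, producing a $K_4$ in the contracted graph and reducing to the previous case. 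The secondary technicality of keeping $v$ out of every bag of the $K^*_{4,2}$-model is addressed by rerouting, using that $v$ is universally adjacent to $N(v)$: if $v \in \alpha(b_j)$, then $v$ can typically be removed from $\alpha(b_j)$ and replaced by a suitable neighbor in $\alpha(b_j) \cap N(v)$ while preserving connectivity and the rooted-model conditions.
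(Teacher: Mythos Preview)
Your proposal takes a different route from the paper, and the route has real gaps.

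\textbf{The paper's argument.} The paper never invokes \cref{l:K42star}. After observing $\Delta(H)\le 2$, it uses only that $G\setminus N[v]$ has a component $C$ with at least five neighbours in $N(v)$ (from \cref{c:vertexnumber} and \cref{c:5connected}). It then lists five maximal shapes for $H$ ($C_7$, $C_6\sqcup K_1$, $C_5\sqcup K_2$, $C_4\sqcup C_3$, $C_3\sqcup C_3\sqcup K_1$) and, in each, performs two explicit contractions: one edge inside $N(v)$ (to merge two vertices whose combined $H$-neighbourhood is large) and a contraction of $C$ onto a vertex of $N(v)$ (to add a single missing edge). This leaves at most two non-edges on the contracted $N[v]$, both incident to one vertex, yielding $K_7^{\vee}$. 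No rooted-minor lemma, no edge-count, no $\omega$-split is needed.

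\textbf{Gaps in your approach.} First, invoking \cref{t:KM2019} in the $\omega=3$ subcase is illegitimate here: that lemma requires a proper colouring, and in \cref{s:extremal} no colouring of $G$ is available --- \cref{t:KM2019} is only used later, in \cref{s:main}. Second, your reduction in the $\omega=3$ case (``contract a path through the outside to identify a non-adjacent pair in $N(v)$, reducing to the previous case'') does not obviously preserve the hypotheses you need: after contraction the resulting graph is a proper minor of $G$, hence by minimality is not an enemy, so it may fail the edge bound $|E|\ge 4|V|-9$ required by \cref{l:K42star}, or fail $4$-connectivity. You would need a separate argument for each configuration, and your list of configurations is incomplete (you omit, e.g., $C_5\sqcup P_2$ and $C_3\sqcup P_2\sqcup P_2$). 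Third, the ``secondary technicality'' of $v$ lying in a bag of the $K^*_{4,2}$-model is not minor: applying \cref{l:K42star} to $G\setminus v$ fails because $|E(G\setminus v)|=4|V(G\setminus v)|-11<4|V(G\setminus v)|-10$, and your rerouting sketch (``replace $v$ by a suitable neighbour in $\alpha(b_j)\cap N(v)$'') does not explain how connectivity of the bag or adjacency to $\alpha(b_{3-j})$ is maintained.

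In short, the $\omega\ge 5$ subcase is fine, but the heart of the claim --- the $\omega\le 4$ case --- is only sketched, with one tool cited that does not apply and several verifications missing. The paper's direct case analysis on the complement, using a single outside component, is both simpler and complete.
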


\noindent {\it Proof.}	
 Suppose not. Then by Claims \ref{c:mindegree} and \ref{c:degree5} we have $\deg(v)=7$. By Claims  \ref{c:vertexnumber} and \ref{c:5connected} there exists a component $C$ of $G \setminus N[v]$ such that at least five vertices of $N(v)$ have a neighbor in $C$.
 
 Let $H$ be the complement of $G[N(v)]$. By \cref{c:triangles} every vertex of $H$ has degree at most two. Thus $H$ is isomorphic to a subgraph of one of the following graphs:
 $C_7$, $C_6 \sqcup K_1$, $C_5 \sqcup K_2$, $C_4 \sqcup C_3$ and $C_3 \sqcup C_3 \sqcup K_1$, where $\sqcup$ denotes disjoint union.
 
 We consider the corresponding cases, in each case obtaining a contradiction by showing that $K^{\vee}_7$ is a minor of $G$.
 \begin{itemize}
 	\item
 If $H$ is a subgraph of $C_7$ with vertex set $\{u_1,u_2,\ldots,u_7\}$ in order, we assume by symmetry that $u_1$ and $u_2$ have a neighbor in $C$. Contracting the edge $u_3u_7$ and contracting $C$ onto $u_2$ to add an edge $u_1u_2$ to $G[N(v)]$ we obtain a minor of $G$ inducing $K^{\vee}_7$ on $N[v]$.
 \item   If $H$ is a subgraph of $C_6 \sqcup K_1$ with vertex set $\{u_1,u_2,\ldots,u_6\}$ of $C_6$ in order, similarly to the previous case we can assume that $u_1$ and $u_2$ have a neighbor in $C$.  Contracting  $u_3u_6$ and   adding an edge $u_1u_2$ to $G[N[v]]$ using $C$, we again conclude that  $G$ has a $K^{\vee}_7$ minor.
 \item   If $H$ is a subgraph of $C_5 \sqcup K_2$ with vertex sets $\{u_1,u_2,\ldots,u_5\}$ and $\{w_1,w_2\}$, respectively. Again we can assume that the edge $u_1u_2$ can be added to $G[N(v)]$ by contracting $C$, and contracting $u_5w_1$ we obtain a $K^{\vee}_7$ minor.
 \item   If $H$ is a subgraph of $C_4 \sqcup C_3$ with vertex sets $\{u_1,u_2,\ldots,u_4\}$ and $\{w_1,w_2,w_3\}$, respectively.  If $C$ has two neighbors in $\{w_1,w_2,w_3\}$, say $w_2$ and $w_3$, we add an edge $w_2w_3$ using $C$ and contract $w_1u_1$ to obtain a    $K^{\vee}_7$ minor. Otherwise $\{u_1,u_2,\ldots,u_4\}$ all have a neighbor in $C$ and we contract $C$ onto $u_1$ and contract $u_3w_1$. 
 \item  Finally, if $H$ is a subgraph of $C_3 \sqcup  \sqcup C_3 \sqcup K_1$ with vertex sets $\{u_1,u_2,u_3\}$ and $\{w_1,w_2,w_3\}$ of two cycles. Without loss of generality, $u_1$ and $u_2$ have a neighbor in $C$ and we add an edge $u_1u_2$ to $G[N[v]] $ using $C$ and contract $u_3w_1$ to obtain the required minor in this last case. \qed
 \end{itemize}  
\vskip 5pt
	
The remainder of the proof is occupied by the last, most involved case when the minimum degree of $G$ is six.

Recall that $v$ is a degree six vertex in $G$. By \cref{c:triangles} every vertex in $N(v)$ has at most one non-neighbor in $N(v)$.
Let $N(v)= \{u_1,w_1,u_2,w_2,u_3,w_3\}$, such that $u_1w_1,u_2w_2,u_3w_3$ are the only possible non-edges in $G[N(v)]$.
 
 \begin{claim}\label{c:nopaths} There do not exist vertex disjoint paths $P$ and $Q$ in $G$ such that  \begin{itemize}
 		\item $P$ and $Q$ have no internal vertices in $N[v]$,
 		\item $P$ has ends $u_i$ and $w_i$ and $Q$ has ends $u_j$ and $w_j$ for some $\{i,j\} \subseteq \{1,2,3\}$.
 \end{itemize}  
 \end{claim}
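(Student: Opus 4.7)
The plan is to proceed by contradiction. Suppose that vertex-disjoint paths $P$ and $Q$ as described exist, with endpoints $\{u_i,w_i\}$ and $\{u_j,w_j\}$ respectively for some distinct $i,j\in\{1,2,3\}$; let $k$ denote the unique element of $\{1,2,3\}\setminus\{i,j\}$. Recall that every pair of vertices in $N(v)=\{u_1,w_1,u_2,w_2,u_3,w_3\}$ is adjacent in $G$ except possibly the three matching pairs $u_1w_1$, $u_2w_2$, $u_3w_3$, so the only non-edges $G[N[v]]$ can contain lie among these three pairs. The idea is to use $P$ and $Q$ to ``fill in'' two of these three pairs in a suitable minor of $G$.

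Concretely, I would form a minor $G'$ of $G$ by contracting every edge of $P$ except one, so that $P$ collapses to the single edge $u_iw_i$, and symmetrically contracting $Q$ to the single edge $u_jw_j$. Because $P$ and $Q$ are vertex-disjoint and all their internal vertices lie outside $N[v]$, these contractions keep the seven vertices of $N[v]$ pairwise distinct and preserve every edge of $G[N[v]]$, while additionally introducing the edges $u_iw_i$ and $u_jw_j$. Consequently, $G'[N[v]]$ is a graph on seven vertices whose only possible non-edge is $u_kw_k$, and therefore $G'[N[v]]$ contains $K^-_7$ as a subgraph. Since $K^{\vee}_7$ is obtained from $K^-_7$ by deleting one further edge incident with an endpoint of the existing non-edge, $K^{\vee}_7$ is a subgraph of $K^-_7$, so $G'[N[v]]$ contains $K^{\vee}_7$ as a subgraph. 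Hence $G$ has a $K^{\vee}_7$-minor, contradicting the choice of $G$ as an enemy.

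The argument is essentially a one-step contraction, so I do not anticipate any real obstacle; the only routine verification is that the two path-contractions can be carried out simultaneously without merging any two distinct vertices of $N[v]$ or destroying any existing edge of $G[N[v]]$, which is immediate from the vertex-disjointness of $P$ and $Q$ together with the hypothesis that their internal vertices all avoid $N[v]$.
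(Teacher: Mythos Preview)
Your proposal is correct and follows essentially the same approach as the paper: contract $P$ and $Q$ to single edges so that $G[N[v]]$ becomes $K_7^-$ (or $K_7$), and hence contains $K_7^{\vee}$, contradicting the standing assumption on $G$. The paper's proof is a one-line version of exactly this argument.
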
	
 \begin{proof}	
 	If such paths exist, contracting them to single edges and deleting the rest of $G \setminus N[v]$ we obtain $K^-_7$ minor of $G$.
\end{proof}	
	
 \begin{claim}\label{c:nonedges} $u_iw_i \not \in E(G)$ for $i=1,2,3$. 
\end{claim}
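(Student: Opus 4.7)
\noindent\emph{Proof plan.}
By symmetry among $i\in\{1,2,3\}$, it suffices to derive a contradiction from $u_1w_1\in E(G)$. If at most one of $u_2w_2, u_3w_3$ is missing from $E(G)$, then $G[N[v]]$ is a graph on seven vertices missing at most one edge, so it contains $K_7^{\vee}$ as a subgraph, contradicting the assumption that $G$ has no $K_7^{\vee}$-minor. So I may assume $u_2w_2,u_3w_3\notin E(G)$.

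The key observation is that the edge $u_1w_1$ is itself a single-edge path $P$ with no internal vertices, so any path $Q$ joining $u_j$ to $w_j$ (for some $j\in\{2,3\}$) with internal vertices in $V(G)\setminus N[v]$ would, together with $P$, contradict \cref{c:nopaths}. Hence no such $Q$ exists: $\{v,u_1,w_1,u_3,w_3\}$ is a $5$-cut of $G$ separating $u_2$ from $w_2$, and symmetrically $\{v,u_1,w_1,u_2,w_2\}$ separates $u_3$ from $w_3$.

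To obtain the desired contradiction, I plan to apply \cref{l:RST24} to $G':=G\setminus\{v,u_1,w_1\}$ with the four vertices $u_2,u_3,w_2,w_3$ in this cyclic order. Outcome~(i) produces vertex disjoint paths in $G'$ joining $u_2$ to $w_2$ and $u_3$ to $w_3$; since $V(G')\setminus\{u_2,u_3,w_2,w_3\}=V(G)\setminus N[v]$, their internal vertices automatically lie outside $N[v]$, directly contradicting \cref{c:nopaths}. Outcome~(iii) would require $G'$ to be planar with $u_2,u_3,w_2,w_3$ on the boundary of a disc; I rule this out by an edge count using \cref{c:edgenumber,c:mindegree} together with the fact that the total degree excess $\sum_x(\deg(x)-6)=2|V(G)|-16$ must be split between $u_1,w_1$ and the vertices outside $N[v]$, which (combined with the forced edges from $V(G)\setminus N[v]$ to $N(v)$) drives $|E(G')|$ above the planar threshold $3|V(G')|-6$. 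Outcome~(ii) supplies a separation $(A_0,B_0)$ of $G'$ of order at most three with $\{u_2,u_3,w_2,w_3\}\subseteq A_0$ and $|B_0\setminus A_0|\geq 2$; extending it to a separation of $G$ by placing $v$ into the $A_0$-side (valid because $N(v)\subseteq A_0\cup\{u_1,w_1\}$) together with each of $u_1,w_1$ that has no neighbor in $B_0\setminus A_0$ produces a $G$-separation of order at most $4$ with both sides of size at least~$2$ whenever at most one of $u_1,w_1$ is forced into the separator; in that case \cref{c:5connected} or \cref{c:no4sepNotV} delivers the contradiction.

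The main obstacle is the remaining subcase of outcome~(ii), in which both $u_1$ and $w_1$ have a neighbor in $B_0\setminus A_0\subseteq V(G)\setminus N[v]$ and the extension produces only a $5$-separation of $G$ with separator $(A_0\cap B_0)\cup\{u_1,w_1\}$. I plan to dispose of it by combining this new separator with the two $5$-cuts obtained from \cref{c:nopaths}: the partition of $V(G)\setminus N[v]$ into the $u_2$-side and $w_2$-side of $\{v,u_1,w_1,u_3,w_3\}$, together with the symmetric partition coming from $\{v,u_1,w_1,u_2,w_2\}$, forces $B_0\setminus A_0$ to lie in a single ``quadrant'' of the resulting cross-partition. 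From this rigidity I expect to either build a $K_7^{\vee}$-minor of $G$ explicitly by contracting a connected subgraph containing $B_0\setminus A_0$ together with a suitable vertex of $\{u_3,w_3\}$, or to derive a further violation of the $5$-connectivity of $G$.
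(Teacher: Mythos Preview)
Your opening reductions are fine, and your ``key observation'' is exactly right: once $u_1w_1\in E(G)$, any path $Q$ from $u_j$ to $w_j$ ($j\in\{2,3\}$) with internal vertices outside $N[v]$ together with the single edge $u_1w_1$ violates \cref{c:nopaths}. But at this point you are one line away from a complete proof and you do not take it. The paper simply \emph{exhibits} such a $Q$: by \cref{c:vertexnumber} there is a component $C$ of $G\setminus N[v]$; by \cref{c:5connected} at least five of the six vertices of $N(v)$ have a neighbour in $C$; hence for some $j\in\{2,3\}$ both $u_j$ and $w_j$ do, and a path through $C$ furnishes the forbidden $Q$. That is the entire argument.

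Instead you invoke \cref{l:RST24} on $G\setminus\{v,u_1,w_1\}$, which is the machinery the paper deploys only at the very end of the proof of \cref{t:extremal}, \emph{after} the additional structural information from \cref{c:nonedges}, \cref{c:nocommonnbrs} and \cref{c:no5separation} is available. Without that information your treatment of outcomes (ii) and (iii) has genuine gaps.

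For outcome (iii), the edge count does not go through as stated. With $n=|V(G)|$, removing $\{v,u_1,w_1\}$ deletes $\deg(u_1)+\deg(w_1)+3$ edges (these three vertices form a triangle), so $|E(G')|=4n-11-\deg(u_1)-\deg(w_1)$, while the planar bound gives $|E(G')|\le 3(n-3)-6=3n-15$. You therefore need $\deg(u_1)+\deg(w_1)\le n+3$, which is not forced. Your assertion that the degree excess $2n-16$ is ``split between $u_1,w_1$ and the vertices outside $N[v]$'' is incorrect: each of $u_2,w_2,u_3,w_3$ has only five neighbours in $N[v]$ and can carry arbitrary excess degree. The paper's analogous count at the end of \cref{s:extremal} works only because, by that stage, $u_1$ and $w_1$ have been chosen to have no common neighbour outside $N[v]$, a hypothesis you do not have here.

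For outcome (ii), you yourself flag the subcase in which both $u_1$ and $w_1$ have neighbours in $B_0\setminus A_0$ as unresolved; the sketch based on ``quadrants'' of two $5$-cuts is a hope, not a proof.

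In short, the proposal is both incomplete and far more elaborate than necessary; the three-line component argument above is what you were missing.
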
	
\begin{proof}	
Suppose for a contradiction that $u_3w_3 \in E(G)$.  Consider a component $C$ of  $G \setminus N[v]$. By \cref{c:5connected} at least five vertices of $N(v)$ have a neighbor in $C$. Without loss of generality, we assume that $u_1,w_1$ are among these neighbors. Then a path between $u_1w_1$ in $C$ together with a single edge path $u_3v_3$ contradict \cref{c:nopaths}.
\end{proof}

\begin{claim}\label{c:nocommonnbrs} There exists $i \in \{1,2,3\}$ such that $u_i$ and $w_i$ have no common neighbors in $V(G)-N[v]$. 
\end{claim}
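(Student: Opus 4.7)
The plan is to argue by contradiction: suppose that for each $i \in \{1, 2, 3\}$ there is a common neighbor $x_i \in N(u_i) \cap N(w_i) - N[v]$, and derive a contradiction combining \cref{c:nopaths} with the $5$-connectivity established in \cref{c:5connected}.

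The first observation is that all $x_i$ must coincide. If $x_i \neq x_j$ for some $i \neq j$, then the length-two paths $u_i x_i w_i$ and $u_j x_j w_j$ are vertex disjoint with internal vertices outside $N[v]$, directly contradicting \cref{c:nopaths}. So $x_1 = x_2 = x_3 =: x$; in fact the same argument shows that $x$ is the \emph{unique} common neighbor of each pair $u_i, w_i$ in $V(G) - N[v]$. In particular, $x$ is adjacent to every vertex of $N(v)$.

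Set $Y = V(G) - N[v] - \{x\}$, which is non-empty by \cref{c:vertexnumber}. The crux of the argument is to bound the outer boundary $N_G(C)$ of any connected component $C$ of $G[Y]$. Since $v$ has no neighbor in $Y$ and $C$ has no edges to the rest of $G[Y]$, we get $N_G(C) \subseteq N(v) \cup \{x\}$. The main technical step is to show that $|N_G(C) \cap \{u_i, w_i\}| \leq 1$ for each $i$: if both $u_i$ and $w_i$ had neighbors $y_1, y_2 \in C$, then $y_1 = y_2$ is impossible because no vertex of $Y$ lies in $N(u_i) \cap N(w_i)$, while $y_1 \neq y_2$ allows a $C$-path between them to combine with the edges $u_i y_1$ and $w_i y_2$ into a $u_i$-$w_i$ path internally disjoint from $N[v]$; paired with $u_j x w_j$ for any $j \neq i$, this again contradicts \cref{c:nopaths}. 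Hence $|N_G(C)| \leq 3 + 1 = 4$, and since $v \notin C \cup N_G(C)$, the set $N_G(C)$ is a vertex cut of $G$ of order at most four, contradicting \cref{c:5connected}. The consistency argument for $|N_G(C) \cap \{u_i, w_i\}| \leq 1$ is the only nontrivial step; once it is in hand, the $5$-connectivity contradiction is immediate.
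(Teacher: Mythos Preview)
Your argument is correct and is essentially the paper's proof run in contrapositive form: both reduce to $x_1=x_2=x_3=:x$, take a component $C$ of $G\setminus(N[v]\cup\{x\})$, and combine $5$-connectivity with \cref{c:nopaths}; the paper uses $5$-connectivity to force some pair $\{u_i,w_i\}\subseteq N_G(C)$ and then contradicts \cref{c:nopaths}, whereas you exclude any such pair via \cref{c:nopaths} and then contradict $5$-connectivity. One small simplification: the uniqueness observation and the case split $y_1=y_2$ are unnecessary, since even if $y_1=y_2\in C$, the length-two path $u_i y_1 w_i$ is already disjoint from $u_j x w_j$.
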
	
\begin{proof}	
Suppose not:  For every $i \in \{1,2,3\}$ there exists $x_i \in V(G) - N[v]$ such that $u_ix_i,w_ix_i \in E(G)$ 	
If $x_1 \neq x_2$ then the paths $u_1x_1w_1$ and $u_2x_2w_2$ contradict \cref{c:nopaths}. Thus we assume $x_1=x_2=x_3$.

By \cref{c:vertexnumber} there exists a component $C$ of $G \setminus (N[v] \cup \{x_1\})$. Again, at least five vertices in $N[v] \cup \{x\}$ have a neighbor in $C$, and so by symmetry we assume that $u_3$ and $w_3$ are among these neighbors. Then a path $u_1x_1w_1$ and a path from
$u_3$ to $w_3$ through $C$ contradict \cref{c:nopaths}. 
\end{proof}

By \cref{c:nocommonnbrs} we assume without loss of generality that $u_1$ and $w_1$ have no common neighbors in $V(G)-N[v]$.

\begin{claim}\label{c:no5separation} There exists no non-trivial $5$-separation $(A,B)$ of $G$ such that $N[v] \subseteq A$.
\end{claim}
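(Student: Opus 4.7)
We argue by contradiction. Suppose a non-trivial $5$-separation $(A,B)$ of $G$ with $N[v]\subseteq A$ exists, and set $Z:=A\cap B$. The strategy is to derive a contradiction with \cref{c:nopaths} (by producing two vertex-disjoint paths connecting pairs $\{u_i,w_i\}$ and $\{u_j,w_j\}$ with no internal vertex in $N[v]$), or with \cref{c:5connected} (via a small separation), or with the minor-minimality of $G$ (via a smaller enemy obtained by contracting $G[B]$).

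First I would establish preliminary reductions. Since $N[v]\subseteq A$, the vertex $v$ has no neighbor in $B\setminus A$, so $v\in Z$ would imply $(A,B\setminus\{v\})$ is a non-trivial $4$-separation of $G$, contradicting \cref{c:5connected}; hence $v\in A\setminus B$. The same argument shows every $z\in Z$ has a neighbor in $B\setminus A$, and in particular $|B\setminus A|\geq 2$: otherwise the unique vertex $b\in B\setminus A$ would satisfy $\deg(b)\leq|Z|=5$, contradicting \cref{c:degree6}. Consequently $(G[B],Z)$ is internally $5$-connected, which implies $(G[B],Z')$ is internally $4$-connected for every $Z'\subseteq Z$ with $|Z'|=4$.

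The main case is when two full pairs $\{u_i,w_i\},\{u_j,w_j\}\subseteq Z$ for distinct $i,j\in\{1,2,3\}$; let $Z':=\{u_i,w_i,u_j,w_j\}$ and $k$ be the remaining index. Applying \cref{l:K42star} to $(G[B\setminus X],Z')$ for the set $X\subseteq Z\setminus Z'$ of any remaining elements of $N(v)\cap Z$ (so $|X|\leq 1$, and internal $4$-connectivity is preserved), either (a) the graph admits a $Z'$-rooted $K^*_{4,2}$-model, in which case I would absorb the two extra bags into the branch sets of $u_i$ and $u_j$ --- producing a $K_7^-$-minor of $G$ with branch sets $v,u_i(\text{with extra}),w_i,u_j(\text{with extra}),w_j,u_k,w_k$ --- and since $K_7^{\vee}\subseteq K_7^-$ this contradicts the assumption that $G$ has no $K_7^{\vee}$-minor; or (b) $|E(G[B\setminus X])|\leq 4|B\setminus X|-10$, in which case I would construct a smaller minor $G'$ of $G$ by contracting components of $G[B\setminus A]$ appropriately (using \cref{c:RST64} to find a $Z'$-rooted $K_4^-$-minor in $G[B]$ providing the additional edges on $Z'$), and then verify that $G'$ is $4$-connected, is not $K_{2,2,2,2}$, and satisfies $|E(G')|\geq 4|V(G')|-8$, contradicting the minor-minimal choice of $G$.

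It remains to handle the case in which $Z$ contains at most one full pair; then $|N(v)\cap(A\setminus B)|\geq 3$. Applying Menger's theorem to the $5$-connected graph $G$ provides $5$ internally disjoint paths from $v$ to any $b\in B\setminus A$, each necessarily using a distinct vertex of $Z$ as a crossing; combined with \cref{c:nocommonnbrs} and the rigid structure around $N[v]$, this should again yield a contradiction with \cref{c:nopaths} or \cref{c:5connected}. The main obstacle I anticipate is subcase (b) of the main case above, where constructing the smaller enemy and verifying its properties requires delicate edge counting using $|E(G)|=4|V(G)|-8$ from \cref{c:edgenumber}, together with the near-clique structure of $N[v]$, in order to guarantee both the edge lower bound and that $G'\not\simeq K_{2,2,2,2}$.
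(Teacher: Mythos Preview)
Your proposal has genuine gaps and takes a route that diverges substantially from the paper's proof.

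The paper does not split into cases according to how $N(v)$ meets $Z=A\cap B$. Instead, it first uses $5$-connectivity (via Menger) to find five pairwise disjoint paths in $G[A]$ from $N(v)$ to $Z$, internally disjoint from $N[v]$; by symmetry their $N(v)$-ends are $u_1,w_1,u_2,w_2,w_3$ and their $Z$-ends are $s_1,t_1,s_2,t_2,x$. It then applies \cref{l:RST24} (the two-disjoint-crossing-paths theorem) to $G[B]\setminus x$ with the terminals $s_1,s_2,t_1,t_2$ in that cyclic order. Outcome (i) gives disjoint $s_1t_1$- and $s_2t_2$-paths in $G[B]$, which concatenate with the Menger paths to contradict \cref{c:nopaths}; outcome (ii) gives a separation of $G$ of order at most four, contradicting \cref{c:5connected}; and outcome (iii) makes $G[B]\setminus x$ planar with the four terminals on the outer boundary, whereupon any edge $yz$ with $y\in B\setminus A$ lies in at most three triangles of $G$, contradicting \cref{c:triangles}. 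No extremal counting or ``smaller enemy'' construction is used.

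Your approach, by contrast, bets on \cref{l:K42star} and a contraction to a smaller enemy. Two problems arise. First, your ``main case'' need not be the main one: nothing forces any vertex of $N(v)$ to lie in $Z$, let alone two full pairs, so the bulk of the argument may live in what you call the ``remaining case,'' which you leave as a vague appeal to Menger and \cref{c:nopaths}. Second, even within your main case, subcase~(b) --- where \cref{l:K42star} only yields the edge bound $|E(G[B\setminus X])|\le 4|B\setminus X|-10$ --- is explicitly left open, and it is far from clear how to convert this into a $4$-connected proper minor with $|E|\ge 4|V|-8$; the edge accounting is delicate and you have not exhibited it. Your subcase~(a) does work, but that alone is not enough. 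The idea you are missing is that \cref{l:RST24} together with \cref{c:triangles} disposes of the planar obstruction directly, avoiding any need for extremal bookkeeping on $G[B]$.
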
	

\noindent {\it Proof.}
	Suppose such a separation exists. By $5$-connectivity of $G$, $G[A]$ contains a collection $\mc{P}$ of five pairwise disjoint paths, each with one end in $N(v)$  the other end in $A \cap B$ and internally disjoint from $N[v]$. By symmetry we may assume that $u_1,w_1,u_2,w_2$ and $w_3$ be the ends of these paths in $N(v)$ and let $s_1,t_1,s_2,t_2$ and $x$ be the ends of the respective paths in $A \cap B$. We
	apply \cref{l:RST24} to $G'=G[B] \setminus x$ and  a sequence $s_1,s_2,t_1,t_2$ and distinguish cases based on the outcome that holds, obtaining a contradiction in each case.
	\begin{itemize}
		\item  If \cref{l:RST24} (i) holds, $G[B]$ contains a pair of vertex disjoint paths $Q_1$ and $Q_2$ such that $Q_i$ has ends $s_i$ and $t_i$ for $i=1,2$. Extending $Q_1$ and $Q_2$ using paths in $\mc{P}$ that share ends with them, we obtain a pair of paths contradicting \cref{c:nopaths}. 
		\item  If \cref{l:RST24} (ii) holds there exists a separation $(A',B')$ of $G'$ of order at most three such that $(A \cap B) \setminus \{x\} \subseteq A'$ and $B' -A' \neq \emptyset$. Then $(A \cup A' \cup \{x\}, B' \cup \{x\})$ is a non-trivial separation of $G$ of order at most four, contradicting \cref{c:5connected}.
		\item Finally, if \cref{l:RST24} (iii) holds then $G'$ can be drawn in a disc with vertices $s_1,s_2,t_1,t_2$ on the boundary in this order. Consider arbitrary edge  $yz \in E(G')$ such that $y \in B'-A'$. Then $y$ and $z$ have at most two common neighbors in $G'$, as otherwise one of them would be separated from the boundary of the disc by $y,z$  and another common neighbor of $y$ and $z$, and  \cref{l:RST24} (ii) holds, which is an already eliminated possibility. It follows that $|N(y) \cap N(z)| \leq 3$ in $G$, as $x$ is the only vertex that can be a common neighbor of $y$ and $z$ in $G$, but not $G'$. This contradicts \cref{c:triangles}. 		 \qed
	\end{itemize}  		    
\vskip 5pt

For the final step of the proof 
we apply \cref{l:RST24} to the graph $G' = G \setminus \{v,u_1,w_1\}$ and the sequence $u_2,u_3,w_2,w_3$ and again consider the cases based on the outcome. As in the proof of \cref{c:no5separation},  \cref{l:RST24} (i) contradicts \cref{c:nopaths}.

Suppose that  \cref{l:RST24} (ii) holds, i.e. there exists a separation $(A,B)$ of $G'$ of order at most three such that $|B \setminus A| \geq 2$ and $u_2,u_3,w_2,w_3 \in A$. Then $(A \cup\{v,u_1,w_1\}, B \cup \{u_1,w_1\})$ is a separation of $G$ contradicting \cref{c:no5separation}. Finally, if   \cref{l:RST24} (iii) holds then $G'$ can be drawn in a disc with vertices $u_2,u_3,w_2,w_3$ on the boundary in this order. In particular $|E(G')| \leq 3|V(G')|-7$. By our assumption, every vertex in $V(G) \setminus N[v]$ has at most one neighbor in  $\{v,u_1,w_1\}$, while $u_2,u_3,w_2$ and $w_3$ each have three neighbors in $\{v,u_1,w_1\}$. Finally, note that $G$ induces two edges on the set  $\{v,u_1,w_1\}$. Therefore
$$ |E(G)| \leq |E(G')| + (|V(G')| -4)  + 4\cdot 3 + 2 \leq 4|V(G')| + 3 = 4|V(G)|-9,
$$
a contradiction, completing the proof of \cref{t:extremal}.


\section {Proof of \cref{t:main}.}\label{s:main} 

In this section we prove~\cref{t:main}. \cref{t:extremal} serves as our main tool. We will also use \cref{t:KM2019} and  the following additional results.

Let $k \geq 0$ be an integer.
Generalizing the concept mentioned in the introduction, we say that a graph $G$ is \emph{$k$-contraction-critical}  if $\chi(G)=k$, but $\chi(G') < k$ for every minor $G'$ of $G$ such that $G' \not\simeq G$. We will use the following properties of contraction-critical graphs.
The first two are classical and widely used results of Dirac and Mader, respectively, the third one is an ingredient from the  Kawarabayashi-Toft proof of $6$-colorability of $K_{4,4}$- and $K_7$-minor-free graphs, which we also utilize. 

\begin{thm}[Dirac \cite{Dirac1960}]\label{t:Dirac} Let $G$ be a $k$-contraction-critical graph. Then for each $v \in V(G)$ every independent set in $G[N[v]]$ has size at most $\deg(v) - k + 2$.
\end{thm}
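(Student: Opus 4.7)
The plan is to argue by contradiction: assume some independent set $I \subseteq N[v]$ has $|I| > \deg(v) - k + 2$, and construct a $(k-1)$-coloring of $G$ from a $(k-1)$-coloring of a carefully chosen minor, contradicting $\chi(G) = k$. First I would dispose of the sub-case where $v \in I$: every other vertex of $I$ lies in $N(v)$ and is thus adjacent to $v$, so independence forces $I = \{v\}$, and the bound reduces to $1 \leq \deg(v) - k + 2$. This last inequality is the classical minimum-degree bound $\delta(G) \geq k - 1$ for $k$-contraction-critical graphs: applying contraction-criticality to $G - v$ yields a $(k-1)$-coloring, which would extend to $v$ if $\deg(v) < k - 1$.

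So I may assume $v \notin I$, i.e.\ $I \subseteq N(v)$, and set $t := |I| \geq \deg(v) - k + 3$. Since $v$ is adjacent to every vertex of $I$, the set $\{v\} \cup I$ induces a connected star, and contracting this star produces a minor $G^*$ with a single new vertex $v^*$ and $|V(G^*)| = |V(G)| - t < |V(G)|$. By contraction-criticality $\chi(G^*) \leq k - 1$, so I fix such a coloring $c^*$ and let $c_0 := c^*(v^*)$.

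Next I would lift $c^*$ to a coloring $c$ of $V(G) \setminus \{v\}$ by keeping the $c^*$-colors on vertices outside $\{v\} \cup I$ and assigning color $c_0$ to every vertex of $I$. Using independence of $I$ and the fact that every edge of $G$ with exactly one end in $I$ corresponds to an edge at $v^*$ in $G^*$, this is a proper coloring of $G - v$. To extend to $v$, I would count the forbidden colors: $c_0$ (on all of $I$) together with the colors of the vertices of $N(v) \setminus I$ under $c^*$. Since $v^*$ is adjacent in $G^*$ to every vertex of $N(v) \setminus I$, none of those colors equals $c_0$, so the number of distinct forbidden colors is at most $1 + |N(v) \setminus I| = 1 + (\deg(v) - t) \leq k - 2$, leaving an available color for $v$ among the $k-1$ used. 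The resulting $(k-1)$-coloring of $G$ contradicts $\chi(G) = k$.

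The main thing requiring care is the case analysis in the lift — verifying propriety on edges inside $I$, from $I$ to the rest, and at $v$ itself — together with correctly identifying $\{v\} \cup I$ as a connected subgraph eligible for contraction to a single vertex; the arithmetic falls out directly from the assumption $t \geq \deg(v) - k + 3$.
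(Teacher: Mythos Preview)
The paper does not supply its own proof of this statement; it is quoted as a classical result of Dirac and used as a black box. Your argument is correct and is essentially the standard proof: contract the star $\{v\}\cup I$ to a single vertex, pull back a $(k-1)$-coloring of the resulting proper minor so that all of $I$ receives the same color, and then count that at most $1 + (\deg(v) - |I|) \le k-2$ colors are forbidden at $v$. Your handling of the boundary case $v\in I$ via the minimum-degree bound $\delta(G)\ge k-1$ is also the usual reduction. There is nothing to compare against in the paper itself.
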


\begin{thm}[Mader \cite{Mader1967}]\label{t:Mader} For all $k \geq 7$ every  $k$-contraction-critical graph is $7$-connected.
\end{thm}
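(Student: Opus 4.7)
I would argue by contradiction. Assume $G$ is $k$-contraction-critical with $k\geq 7$ and admits a non-trivial separation $(A,B)$ of order $s\leq 6$, chosen with $s$ minimum so that each vertex of $S:=A\cap B$ has a neighbor in both $A\setminus B$ and $B\setminus A$. A standard preliminary is that $\delta(G)\geq k-1$: if $\deg(v)<k-1$, then $G-v$ is a proper minor of chromatic number at most $k-1$, and this coloring extends greedily to $v$, contradicting $\chi(G)=k$.

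First I would dispose of the case $\min(|A\setminus B|,|B\setminus A|)=1$. The lone vertex $v$ on that side has $\deg(v)\leq s\leq 6$, so the minimum degree bound forces $s=6$, $k=7$, and $N(v)=S$. Applying \cref{t:Dirac} at $v$ gives that every independent set in $G[N[v]]$ has size at most $\deg(v)-k+2=1$, so $N[v]$ induces a $K_7$; since $G$ is contraction-critical and $\chi(K_7)=7$, this forces $G=K_7$, which is $7$-connected under the convention in use, contradicting the assumption. Hence we may assume $|A\setminus B|,|B\setminus A|\geq 2$.

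In the main case, I would contract each connected component of $G[B\setminus A]$ to a single vertex to obtain a proper minor $G^A$ of $G$; since $\chi(G^A)\leq k-1$, its coloring restricts to a $(k-1)$-coloring $c_A$ of $G[A]$ in which each resulting ``apex'' vertex (one per component) forces its $S$-neighbors to avoid a single color. Define $c_B$ symmetrically from the analogous minor $G^B$. Because $|S|\leq 6\leq k-1$, each of $c_A|_S$ and $c_B|_S$ uses at most $s$ of the $k-1$ available colors, so a suitable permutation of color classes should align $c_B$ with $c_A$ on $S$, producing a $(k-1)$-coloring of $G$ and the desired contradiction.

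The main obstacle is precisely this alignment step when $G[A\setminus B]$ or $G[B\setminus A]$ has several components: each apex vertex imposes an independent forbidden-color constraint on a subset of $S$, and no single permutation of colors need simultaneously respect all of them. The way around this, in the spirit of Mader's original argument, is to first use the minimality of $s$ together with contraction-criticality to refine the separation so that both $G[A\setminus B]$ and $G[B\setminus A]$ are connected; once this is achieved there is exactly one apex per side, each constraint collapses to ``avoid a single color,'' and the needed permutation exists freely because $k-1\geq s$. Carrying out this connectivity reduction cleanly, and in particular ruling out pathological splittings of $G[B\setminus A]$ using $s$-minimality, is the technical heart of the proof.
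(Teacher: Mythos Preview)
The paper does not prove this theorem; it is quoted from Mader's 1967 paper and invoked as a black box. There is therefore no proof in the paper to compare your attempt against.

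On its own merits, your sketch has a gap more serious than the one you flag. Even after reducing to the case where $G[A\setminus B]$ and $G[B\setminus A]$ are both connected (so each side contributes a single apex), the gluing step still fails: the restrictions $c_A|_S$ and $c_B|_S$ are two proper colorings of $G[S]$ that in general induce \emph{different partitions} of $S$ into color classes, and no permutation of the palette can turn one partition into another. Contracting a connected side to a single vertex adjacent to all of $S$ does not make $S$ a clique in the resulting minor, so nothing forces the vertices of $S$ to receive pairwise distinct colors. The ``color each side and permute to match on the cut'' trick works when $|S|\le 1$ (this is how one shows critical graphs have no cut-vertex) but already breaks at $|S|=2$ when the two cut vertices are non-adjacent. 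Mader's actual argument is substantially more delicate and does not reduce to this scheme.

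Your disposal of the $|A\setminus B|=1$ case is also not quite clean: under the standard definition ($|V|\ge k+1$ is required for $k$-connectivity), $K_7$ is $6$-connected, not $7$-connected, so the theorem as literally stated already fails for $G=K_7$. This is usually handled by convention or by observing that the intended application excludes $K_7$ anyway.
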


\begin{lem}[Kawarabayashi, Toft~{\cite[Lemma 3 (i)]{KT2005}}]
\label{t:KT}
Let $G$ be a $7$-contraction-critical graph let $Z \subseteq V(G)$ with $|Z|=2$. Suppose that there exist three $5$-cliques $L_1,L_2$ and $L_3$  in $G$ such that $L_1 \cap L_2 = L_2 \cap L_3 = L_1 \cap L_3 = Z$. Then $G$ has a $K_7$ minor. 
\end{lem}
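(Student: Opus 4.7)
Set $A_i := L_i \setminus Z$ for $i \in \{1,2,3\}$; these are pairwise vertex-disjoint triangles in $G$, and each of their vertices is adjacent to both vertices of $Z = \{a,b\}$. By \cref{t:Mader}, $G$ is $7$-connected, so $G - Z$ is at least $5$-connected. The plan is to exhibit $K_7$ as a minor by constructing seven pairwise vertex-disjoint connected branch sets $B_1, \ldots, B_7$ with an edge between every pair. The natural skeleton is
\[
B_1 = \{a\}, \quad B_2 = \{b\}, \quad B_3 \supseteq A_1, \quad B_4 \supseteq A_2, \quad B_5 \supseteq A_3.
\]
All adjacencies involving $B_1$ or $B_2$ come for free from the clique structure of the $L_i$, so the problem reduces to two tasks: (a) ensuring that $B_3, B_4, B_5$ are pairwise joined by edges; (b) producing two further connected subgraphs $B_6, B_7$, disjoint from the previous, adjacent to each other and to every earlier branch set.

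For (a), I would apply Menger's theorem in the $5$-connected graph $G - Z$: since $|A_i| = 3$, there are three pairwise vertex-disjoint paths between each pair $A_i, A_j$. With some care --- and, if two natural systems of paths interfere, the crossing-paths tool \cref{l:RST24} --- one can extract paths $P_{12}, P_{13}, P_{23}$ that are pairwise internally disjoint and internally disjoint from $A_1 \cup A_2 \cup A_3$; absorbing these into $B_3, B_4, B_5$ realizes the required adjacencies. Note that the naive attempt to take $B_6, B_7$ inside $\bigcup L_i$ cannot succeed since on the purely clique-structural subgraph $G[\bigcup L_i]$ there is no room left for them; hence $B_6$ and $B_7$ must use external vertices, and so must the connections between triangles if there are no direct $A_i$--$A_j$ edges.

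The main obstacle is (b). Here I would exploit $7$-contraction-criticality through a Kempe-chain argument. Since $G/ab$ is a proper minor of $G$, it admits a proper $6$-coloring, which lifts to an assignment $c$ on $G$ with $c(a) = c(b)$. For each color $i \neq c(a)$, the Kempe chain of colors $\{c(a), i\}$ containing $a$ must also contain $b$: otherwise swapping colors on that chain produces a proper $6$-coloring of $G$, contradicting $\chi(G) = 7$. Applying \cref{t:KM2019} to a suitable sequence of vertices then yields rooted cycle minors through $a$ and $b$, and combined with the completeness of the $L_i$ these should let me locate the two missing branch sets $B_6, B_7$. Control over the colors appearing on $N[a]$ and $N[b]$ afforded by \cref{t:Dirac} (forcing each independent set in $G[N[a]]$ to have size at most $\deg(a) - 5$) constrains $c$ enough to make the extraction concrete.

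The delicate point is to arrange the Kempe-chain-derived minors and the Menger-derived skeleton $B_1, \ldots, B_5$ to live disjointly within $G$. To do this, I would first choose the $6$-coloring of $G/ab$ so that its restriction to $\bigcup L_i$ is compatible with the partition into the three cliques (using $\deg(a), \deg(b) \geq 7$ and the availability of six colors on each of the $4$-cliques $L_i - a$ and $L_i - b$), and only afterwards build the skeleton, re-routing the Menger paths through vertices of prescribed colors where needed. A short case analysis, split according to whether a minimum separator around $L_1 \cup L_2 \cup L_3$ of size close to seven exists and how the three triangles attach to $V(G) \setminus \bigcup L_i$, will likely be necessary to finish.
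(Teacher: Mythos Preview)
The paper does not prove this lemma: it is quoted from Kawarabayashi--Toft~\cite{KT2005} as a black box, so there is no in-paper argument to compare against, and your sketch has to stand on its own.

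The real difficulty is part~(b), and your plan there does not close. By committing to $B_3 \supseteq A_1$, $B_4 \supseteq A_2$, $B_5 \supseteq A_3$ you consume all nine vertices of $A_1 \cup A_2 \cup A_3$, which are the only vertices you actually \emph{know} to lie in $N(a)\cap N(b)$. Any further branch set adjacent to both singletons $\{a\}$ and $\{b\}$ must meet $N(a)\cap N(b)$, so $B_6$ and $B_7$ cannot be built from purely external vertices as you suggest. The Kempe-chain idea does not obviously repair this: what it yields is that $a$ and $b$ lie in a common $\{c(a),i\}$-component for each $i$, and \cref{t:KM2019} would convert a sequence of such chains into a rooted \emph{cycle} model through prescribed differently-coloured vertices --- a single cycle, not two disjoint connected subgraphs each adjacent to $a$, to $b$, and to all of $B_3,B_4,B_5$. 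Two Kempe chains with colour pairs $\{c(a),i\}$ and $\{c(a),j\}$ may also share many colour-$c(a)$ vertices besides $a$ and $b$, so disjointness of the candidate $B_6,B_7$ is not free either. Your final paragraph effectively concedes the gap without filling it.

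A more workable skeleton is to distribute the nine vertices of $A_1\cup A_2\cup A_3$ among \emph{five} branch sets rather than three, so that each of $B_3,\dots,B_7$ already contains a common neighbour of $a$ and $b$; the remaining task is then purely a linkage problem in the $5$-connected graph $G-Z$, and no colouring argument is needed. This is the shape of the argument in~\cite{KT2005}.
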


Finally we will use a theorem of Kawarabayashi, Luo, Niu and Zhang on complete minors in graphs with a triple of nearly disjoint cliques  which generalizes another ingredient from~\cite{KT2005}. 

\begin{thm}[Kawarabayashi, Luo, Niu and Zhang \cite{KLNZ2005}]
	\label{t:KLNZ}
	Let $G$ be a $(k + 2)$-connected graph where $k \geq 5$. If $G$ contains three $k$-cliques, say $L_1, L_2, L_3$, such that $|L_1 \cup L_2 \cup L_3| \geq 3k-3$, then $G$ contains a $K_{k+2}$ as a minor.
\end{thm}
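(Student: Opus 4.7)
My plan is to construct a $K_{k+2}$-model in $G$ explicitly, taking $L_1$ as a ``backbone'' of $k$ singleton branch sets and building two additional connected branch sets $B_1, B_2$ from $L_2$ and $L_3$. The starting observation is that, by inclusion--exclusion, the hypothesis $|L_1 \cup L_2 \cup L_3| \geq 3k - 3$ is equivalent to
\[
|L_1 \cap L_2| + |L_1 \cap L_3| + |L_2 \cap L_3| - |L_1 \cap L_2 \cap L_3| \leq 3,
\]
so each pairwise intersection has size at most $3$, and each of $L_2 \setminus L_1$, $L_3 \setminus L_1$ is a clique on at least $k - 3 \geq 2$ vertices.

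Writing $a = |L_1 \cap L_2|$, I would first build $B_1 \supseteq L_2 \setminus L_1$ as a connected set whose only additional vertices are the interiors of $k - a$ vertex-disjoint paths from $L_2 \setminus L_1$ to $L_1 \setminus L_2$, whose existence in the $(k+2-a)$-connected graph $G - (L_1 \cap L_2)$ is guaranteed by Menger's theorem (both endpoint sets have size $k - a$). The clique structure of $L_2$ automatically makes $B_1$ adjacent to every singleton in $L_1 \cap L_2$, and each Menger path supplies the adjacency to its corresponding singleton in $L_1 \setminus L_2$. An analogous construction produces $B_2$ from $L_3$. To see that $B_1 \sim B_2$: if $(L_2 \cap L_3) \setminus L_1 \neq \emptyset$, its vertices lie in $B_1$ and the clique structure of $L_3$ supplies the required edge; otherwise $G - L_1$ is $2$-connected and a short additional path between the two bases, absorbed into one of them, does the job.

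The main obstacle will be to make the two families of Menger paths simultaneously vertex-disjoint, both from each other and from the singletons and the opposite branch set's base. Nothing prevents the $B_1$-paths from passing through $L_3 \setminus L_1$ and corrupting $B_2$, and symmetrically for $B_2$-paths through $L_2$. I expect the right way to handle this is a careful case analysis on the intersection profile $(|L_1 \cap L_2|, |L_1 \cap L_3|, |L_2 \cap L_3|, |L_1 \cap L_2 \cap L_3|)$, exploiting the freedom to absorb stray $L_3$-vertices that land on a $B_1$-path back into $B_1$ at the cost of reducing $B_2$'s budget, and conversely. The tightest case is when $L_1, L_2, L_3$ are pairwise disjoint: there is no slack for sharing, roughly $2k$ path-interior vertices must be found in $V(G) \setminus (L_1 \cup L_2 \cup L_3)$, and the full strength of $(k+2)$-connectivity is needed to route the two path systems without collision. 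In the opposite extreme, where the pairwise intersections are as large as inclusion--exclusion allows, most of the required $K_{k+2}$-minor is already visible inside $L_1 \cup L_2 \cup L_3$ and only a few connecting paths are needed.
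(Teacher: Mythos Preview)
This theorem is not proved in the paper; it is quoted from \cite{KLNZ2005} as an external tool and invoked as a black box in \cref{s:main} (specifically to obtain \cref{c:klnz}). There is therefore no in-paper proof to compare your proposal against.

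As a brief comment on the proposal itself: your inclusion--exclusion reduction is correct, and the strategy of taking $L_1$ as singleton branch sets and attaching $L_2$, $L_3$ via Menger fans is the natural shape for results of this type. You are also right that the genuine content lies in making the two path systems simultaneously disjoint from each other and from the opposite clique's base; this is not a routine step, and your proposal is honest in flagging it as an unresolved obstacle rather than claiming a proof. Whether the case analysis you sketch can actually be carried through within the $(k+2)$-connectivity budget---particularly in the pairwise-disjoint case you single out---is precisely the substance of \cite{KLNZ2005}, so at this stage what you have is a plausible plan rather than a proof.
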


With these in hand, we embark on the proof of \cref{t:main}.

As in the proof of \cref{t:extremal} we consider for a contradiction a minor-minimal counterexample to the theorem, i.e. a graph $G$ such that
\begin{itemize}
	\item $\chi(G) \geq 7$, 
	\item $G$ has no $K_7^{\vee}$-minor, and
	 \item $G$ is minor-minimal subject to these conditions.
\end{itemize}

 Note that this implies that $G$ is $7$-contraction-critical. As in the proof of \cref{t:extremal} we proceed to establish properties of $G$ in a series of claims.

\begin{claim}\label{c:7connected}  $G$ is $7$-connected. In particular,  $\deg(v) \geq 7$ for every $v \in V(G)$. 
\end{claim}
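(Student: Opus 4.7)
The plan is essentially a one-line invocation of Mader's theorem \cref{t:Mader}, together with a small observation about the order of $G$. Since $G$ is a minor-minimal graph with $\chi(G) \geq 7$ and contracting an edge drops the chromatic number by at most one, we have $\chi(G) = 7$, and more generally $\chi(G') < 7$ for every proper minor $G'$, so $G$ is $7$-contraction-critical as remarked immediately before the claim. Applying \cref{t:Mader} with $k = 7$ yields that $G$ is $7$-connected.

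For the second part, I would first observe that $G \not\simeq K_7$: indeed $K_7^{\vee}$ is a subgraph of $K_7$, hence a minor, and $G$ was assumed to have no $K_7^{\vee}$-minor. A $7$-connected graph on at least $8$ vertices has minimum degree at least $7$, and since a $7$-connected graph has at least $8$ vertices or is a clique $K_n$ with $n \leq 7$, the exclusion of $K_7$ (together with $\chi(G) = 7$ forcing $|V(G)| \geq 7$) gives $|V(G)| \geq 8$. Thus $\deg(v) \geq 7$ for every $v \in V(G)$, as claimed.

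There is no real obstacle here: the entire statement is a direct consequence of the already-quoted theorem of Mader and the trivial observation that $G$ cannot equal $K_7$. The claim is almost surely included as a standing assumption that will be used repeatedly in subsequent claims, playing the same structural role that \cref{c:5connected} played in the proof of \cref{t:extremal}.
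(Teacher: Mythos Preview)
Your proposal is correct and takes essentially the same approach as the paper: the paper's proof is simply ``Follows from~\cref{t:Mader},'' relying (as you do) on the observation made just before the claim that $G$ is $7$-contraction-critical. Your additional remarks justifying $\chi(G)=7$ and $|V(G)|\geq 8$ are correct but more than the paper spells out; note in particular that under the standard definition a $7$-connected graph already has at least $8$ vertices, so the exclusion of $K_7$ is not actually needed for the degree bound.
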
	

\begin{proof}
	Follows from~\cref{t:Mader}.
\end{proof}

\begin{claim}\label{c:K6vee}  $G$ does not contain a $K_6^{\vee}$ subgraph.	\end{claim}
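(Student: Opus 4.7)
Suppose for contradiction that $G$ contains a $K_6^\vee$ subgraph $H$ with vertex set $Z = \{u, v_1, v_2, v_3, v_4, v_5\}$, where $u$ is the apex vertex of $H$ (so $uv_1, uv_2 \notin E(H)$), the set $\{v_1, \ldots, v_5\}$ induces a $K_5$ in $G$, and $uv_3, uv_4, uv_5 \in E(G)$. The plan is to use the single set $V(G) \setminus Z$ as a seventh branch set to produce a $K_7^\vee$-minor of $G$, contradicting the assumption that $G$ has no such minor.

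By \cref{c:7connected}, $G$ is $7$-connected with $\deg(z) \geq 7$ for every $z \in V(G)$; in particular $|V(G)| \geq 8$, so $V(G) \setminus Z$ is non-empty, and since $|Z| = 6 < 7$ the subgraph $B := G[V(G) \setminus Z]$ is connected. Moreover, every $z \in Z$ has at most $|Z \setminus \{z\}| = 5$ neighbors inside $Z$, so it has at least $\deg(z) - 5 \geq 2$ neighbors in $V(B)$; in particular, each of $u, v_1, \ldots, v_5$ has a neighbor in $V(B)$.

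Consequently, the seven pairwise disjoint, pairwise connected sets $\{u\}, \{v_1\}, \{v_2\}, \{v_3\}, \{v_4\}, \{v_5\}, V(B)$ form the branch sets of a minor $G'$ of $G$ on seven vertices in which the contracted vertex representing $V(B)$ is adjacent to each of $u, v_1, \ldots, v_5$, and the six vertices of $Z$ induce a supergraph of $K_6^\vee$. The only edges possibly missing from $G'$ are $uv_1$ and $uv_2$, which share the endpoint $u$, so $G'$ contains $K_7^\vee$ as a spanning subgraph and hence $K_7^\vee$ is a minor of $G$---a contradiction. The entire argument uses only the $7$-connectivity of $G$; the only point requiring any care is verifying that $V(G) \setminus Z$ is non-empty, connected, and adjacent to every vertex of $Z$, and all three facts follow directly from \cref{c:7connected}.
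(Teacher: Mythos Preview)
Your proof is correct and follows essentially the same approach as the paper: contract the complement of the $K_6^\vee$ to a single vertex using 7-connectivity to obtain a $K_7^\vee$-minor. The paper phrases it in terms of a connected component of $G\setminus V(H)$ (which by 7-connectivity is in fact all of $G\setminus V(H)$), while you argue directly that $G\setminus Z$ is connected and that each vertex of $Z$ has a neighbor outside; the substance is identical.
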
	

\begin{proof}
	Suppose $H$ is a subgraph of $G$ isomorphic to $K_6^{\vee}$. Let $C$ be a connected component of $G \setminus V(H)$. By \cref{c:7connected}  every vertex of $H$ has a neighbor in $C$. Contracting $C$ to a single vertex we obtain $K_7^{\vee}$ as a minor of $G$, a contradiction.
\end{proof}

\begin{claim}\label{c:degree7vtx}  $G$ has at least $18$ degree seven vertices.
\end{claim}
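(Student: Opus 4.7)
The plan is a short double-counting argument combining the extremal theorem with the minimum-degree bound already established.

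First, I would apply \cref{t:extremal} to $G$. Since $G$ is $7$-connected by \cref{c:7connected}, it is certainly $4$-connected. Moreover $G \not\simeq K_{2,2,2,2}$, because $\chi(K_{2,2,2,2}) = 4 < 7$, whereas $\chi(G) = 7$. So the hypothesis $|E(G)| \geq 4|V(G)| - 8$ would force $G$ to contain a $K_7^{\vee}$-minor, contradicting our assumption. Therefore
\[
|E(G)| \leq 4|V(G)| - 9.
\]

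Next, let $n_7$ denote the number of degree-$7$ vertices of $G$. By \cref{c:7connected}, every vertex has degree at least $7$, so the degree sum satisfies
\[
2|E(G)| \;=\; \sum_{v \in V(G)} \deg(v) \;\geq\; 7 n_7 + 8 (|V(G)| - n_7) \;=\; 8|V(G)| - n_7.
\]
Combining the two inequalities gives $8|V(G)| - n_7 \leq 2|E(G)| \leq 8|V(G)| - 18$, hence $n_7 \geq 18$, as required.

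I expect no real obstacle: the only non-routine step is verifying that \cref{t:extremal} is applicable, which just amounts to observing $7$-connectivity and ruling out $K_{2,2,2,2}$ via its chromatic number. The rest is a one-line handshake computation.
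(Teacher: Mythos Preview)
Your proposal is correct and essentially identical to the paper's proof: both apply \cref{t:extremal} to bound $|E(G)| \leq 4|V(G)|-9$ and then use the handshake lemma together with the minimum-degree bound from \cref{c:7connected} to force at least $18$ vertices of degree seven. The paper phrases the count as $\sum_{v}(\deg(v)-8)\leq -18$, but this is the same computation you wrote out.
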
	

\begin{proof}
	As $G$ is $4$-connected, has no  $K_7^{\vee}$ minor and $G \not \simeq K_{2,2,2,2}$, we have $|E(G)| \leq 4 |V(G)|-9$ by \cref{t:extremal}. Thus $\sum_{v \in V(G)} (\deg(v)-8) \leq -18$. By \cref{c:7connected} $\deg(v) \geq 7$  for every $v \in V(G)$, so $G$ must have at least $18$ degree seven vertices contributing negatively to this sum.
\end{proof}

Let $\mc{C}$ be the set of all cliques of size five in $G$.

\begin{claim}\label{c:nbrhood}  If $v \in V(G)$ is such that $\deg(v)=7$ then there exists $L \in \mc{C}$ such that $\{v\} \subseteq L \subseteq N[v]$. 
\end{claim}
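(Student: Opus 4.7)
Suppose for contradiction that $G[N(v)]$ contains no $K_4$; equivalently, no $5$-clique of $G$ contained in $N[v]$ passes through $v$. By Dirac's theorem (\cref{t:Dirac}) with $k = 7$ and $\deg(v) = 7$ we have $\alpha(G[N[v]]) \leq 2$, hence $\alpha(G[N(v)]) \leq 2$. Set $H := \overline{G[N(v)]}$; then $H$ is triangle-free on seven vertices with $\alpha(H) \leq 3$, and since neighborhoods in a triangle-free graph are independent, $\Delta(H) \leq \alpha(H) \leq 3$.

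The plan is a case analysis on $H$. Ramsey-type arguments dispose of most sub-cases: if $H$ has an isolated vertex $u$, then $u$ is $G$-adjacent to the six other vertices of $N(v)$, on which $\alpha \leq 2$ forces, by $R(3,3) = 6$, a triangle that together with $u$ yields a $K_4$ in $N(v)$ --- contradiction. A parallel argument handles pendants of $H$, exploiting that $C_5$ is the unique five-vertex graph with both $\omega \leq 2$ and $\alpha \leq 2$, and similarly most $H$ with $\delta(H) \geq 2$. After these reductions I expect a single exceptional structure of $H$ to survive.

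The residual case is dispatched via a Kempe-chain argument with \cref{t:KM2019}. Since $G$ is $7$-contraction-critical, fix a proper $6$-coloring $c$ of $G - v$; as $\chi(G) = 7$, the seven vertices of $N(v)$ exhaust all six colors, so one color $s$ appears on a pair $x, y \in N(v)$, while each remaining color $s_i$ appears on a unique vertex $x_i \in N(v)$. A standard Kempe-swap argument (any swap freeing a color on $N(v)$ would extend $c$ to a $6$-coloring of $G$, contradicting $\chi(G) = 7$) yields: (a) for all $i \neq j$ in $\{1, \ldots, 5\}$, the $(s_i, s_j)$-Kempe chain containing $x_i$ also contains $x_j$; and (b) for each $i$, the $(s, s_i)$-Kempe chain through $x_i$ meets $\{x, y\}$.

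By pigeonhole on (b), at least three $(s, s_i)$-chains meet a common vertex of $\{x, y\}$, say $x$. Choose a cyclic ordering $(x, x_{j_1}, x_{j_2}, x_{j_3}, x_{j_4}, x_{j_5})$ of six distinctly colored vertices of $N(v)$ in which consecutive pairs are Kempe-connected (possible by (a) together with the pigeonhole) and apply \cref{t:KM2019} to obtain a rooted $C_6$-model in $G - v$ with disjoint, cyclically adjacent bags. Combining this model with $v$ (adjacent in $G$ to all six roots) and the remaining vertex $y$, whose $G$-adjacencies to $N(v)$ are prescribed by the known structure of $H$, should yield a $K_7^{\vee}$-minor of $G$ --- contradicting the $K_7^{\vee}$-minor-freeness of $G$. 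The main obstacle is precisely this final assembly: extracting a $K_7^{\vee}$-minor, rather than a strictly weaker wheel-like structure, from the rooted $C_6$-model together with $v$ and $y$ requires leveraging the non-cyclic Kempe connections among the $x_i$'s guaranteed by (a), as well as the exact constraints on $H$ in the exceptional case.
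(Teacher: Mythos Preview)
Your overall strategy---Dirac's theorem to bound $\alpha(G[N(v)])$, a structural reduction to an exceptional configuration, then Kempe chains fed into \cref{t:KM2019}---matches the paper's. The gap is precisely the final assembly you flagged, and it stems from how the Kempe-chain argument is set up.

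The paper does \emph{not} $6$-color $G - v$. It first shows (citing \cite{KT2005}) that the exceptional configuration is a Moser spindle inside $G[N(v)]$: a $5$-cycle $u_1u_3u_5u_2u_4u_1$ together with two extra vertices $u_3', u_4'$, each attached to a triangle of that cycle, with $u_3'u_4' \notin E(G)$. It then contracts \emph{both} $u_3'$ and $u_4'$ onto $v$ and $6$-colors the resulting proper minor $G'$. In any such coloring the contracted vertex $v'$ together with $u_1,\dots,u_5$ must use all six colors, so the Kempe chains among $u_1,\dots,u_5$ live entirely in the five color classes $c^{-1}(\{1,\dots,5\}) \subseteq V(G)\setminus\{v,u_3',u_4'\}$, and \cref{t:KM2019} applied to that induced subgraph yields a $(u_1,\dots,u_5)$-rooted $C_5$-model whose bags avoid $u_3'$ and $u_4'$.

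This double contraction is the missing idea. It accomplishes three things at once: it reduces the target from a $C_6$- to a $C_5$-model; it keeps $u_3',u_4'$ out of every bag, so $u_3'$ survives as a separate seventh branch vertex; and it arranges that the five roots already induce the pentagram $u_1u_3,\,u_3u_5,\,u_5u_2,\,u_2u_4,\,u_4u_1$ from the Moser spindle, so the pentagon edges supplied by the $C_5$-model complete them to $K_5$. With $v$ universal and $u_3'$ adjacent to $v,u_1,u_3,u_5$, one obtains exactly $K_7^\vee$.

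Your $C_6$-model in $G-v$ lacks this control. The vertex $y$ has the repeated color $s$, so the $(s,s_{j_1})$-Kempe chain through $x$ may well contain $y$, and the bags produced by \cref{t:KM2019} may absorb it; $y$ is not in general available as a separate branch set. Even setting that aside, six bags plus $v$ guarantee only a wheel with a $C_6$ rim, and you do not choose which non-adjacent pair of $N(v)$ receives the repeated color, so you cannot arrange the existing $G$-edges among your six roots to complement the $C_6$ to a near-clique the way the Moser spindle's pentagram complements the $C_5$. The all-pairs Kempe connectivity in your (a) is real, but \cref{t:KM2019} does not upgrade it beyond a cycle model.
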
	

\begin{proof} 
	
Let $H=G[N(v)]$. Then $H$ has no independent set of size three by~\cref{t:Dirac} applied with $k=7$. If $H$ has a clique of size four then the claim holds, so suppose for a contradiction that it does not. Then, as shown in  \cite[Section 2]{KT2005}, and can be verified using moderately routine case analysis, $H$ must have a subgraph isomorphic to the Moser spindle, see \cref{fig:inflationsC5}. Let the vertices of this subgraph be labeled as in  \cref{fig:inflationsC5}. By symmetry, as $\{u_3',u_3,u_4,u_4'\}$ is not a clique in $H$, we assume that $u_3'u_4' \not \in E(G)$.

Let $G' = G/vu_4'/vu_3'$ and let $v'$ be the new vertex of $G'$ obtained by the contractions. By the choice of $G$ there exists a $6$-coloring $c : V(G') \to \{1,\ldots, 6\}$ of $G'$. Moreover, every such coloring uses all six colors on the set $\{v',u_1,\ldots,u_5\}$. Indeed, if one of the colors is not present on this set then we can extend $c$ to a coloring of $G$ by defining $c(v)$ to be this color  and $c(u_4')=c(u_3')=c(v')$. 

Assume for convenience that $c(u_i)=i$ for $1 \leq i \leq 5$.
We now apply \cref{t:KM2019} to $c$ restricted to the subgraph of $G'$ induced by $\cup_{i=1}^5 c^{-1}(u_i) \subseteq V(G) - \{v,u_3',u_4'\}$. We need to show that there exists a Kempe chain of $c$ containing  $u_i$ and $u_{i+1}$  for all $1 \leq i \leq 5$, where  $u_6=u_1$, by convention. Suppose not. Without loss of generality we assume that $u_1$ and $u_2$ belong to different components of the subgraph of $G$ induced by $c^{-1}(1) \cup c^{-1}(2)$. Switching the colors $1$ and $2$  on the component containing $u_1$, we obtain a coloring $c'$ of $G'$ such that $c'(u_1)=c'(u_2)$, contradicting the observation in the previous paragraph.

By \cref{t:KM2019} we conclude that $G \setminus \{v,u_3',u_4'\}$ has a  $(u_1,\ldots,u_5)$-rooted $C_5$-model.  Contracting the bags of this model, we obtain a minor of $G$ which induces $K^{\vee}_7$ on $N[v] - \{u_4'\}$, a contradiction, establishing the claim.
\end{proof}
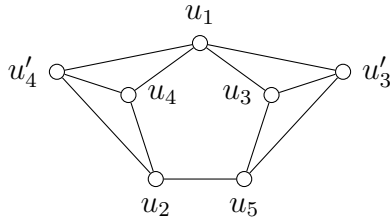
\begin{figure}[h]
	\centering
    \begin{tikzpicture}[roundnode/.style={circle, draw=black, inner sep=0pt, minimum size=2mm}]
	\node[roundnode,label=right:$u_4$] (1) at (162:1) {};
	\node[roundnode,label=above:$u_1$] (2) at (90:1) {};
	\node[roundnode,label=left:$u_3$] (3) at (18:1) {};
	\node[roundnode,label=below:$u_5$] (4) at (306:1) {};
	\node[roundnode,label=below:$u_2$] (5) at (234:1) {};
	\node[roundnode,label=left:$u_4'$] (6) at (162:2) {};
	\node[roundnode,label=right:$u_3'$] (7) at (18:2) {};
	
	\path
	(1) edge (2)
	(1) edge (6)
	(2) edge (3)
	(2) edge (6)
	(2) edge (7)
	(3) edge (4)
	(3) edge (7)
	(4) edge (5)
	(4) edge (7)
	(5) edge (1)
	(5) edge (6);
\end{tikzpicture}
\caption{ The graph considered in the proof of \cref{c:nbrhood} (The Moser spindle).}
\label{fig:inflationsC5}
\end{figure}

\cref{c:degree7vtx} and \cref{c:nbrhood} immediately imply the following.

\begin{claim}\label{c:cliqueunion}  $|\cup_{L \in \mc{C}} L| \geq 18.$ 
	\end{claim}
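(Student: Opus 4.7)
The proof plan is essentially a one-line deduction combining the two preceding claims. By \cref{c:degree7vtx}, the set $D = \{v \in V(G) : \deg(v) = 7\}$ satisfies $|D| \geq 18$. By \cref{c:nbrhood}, for every $v \in D$ there exists a $5$-clique $L_v \in \mc{C}$ with $v \in L_v$, so in particular $v \in \cup_{L \in \mc{C}} L$.

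Therefore $D \subseteq \cup_{L \in \mc{C}} L$, and the claim follows immediately:
\[
|\cup_{L \in \mc{C}} L| \geq |D| \geq 18.
\]

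There is no real obstacle here; the content of the claim is packaged into the preceding two results, and this statement simply records the consequence that will be used later in \cref{s:main}.
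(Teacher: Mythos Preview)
Your proof is correct and matches the paper's approach exactly: the paper simply states that \cref{c:degree7vtx} and \cref{c:nbrhood} immediately imply the claim, which is precisely the containment $D \subseteq \cup_{L \in \mc{C}} L$ you spelled out.
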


\begin{claim}\label{c:clique1}  $|L_1 \cap L_2| \leq 2$ for all pairs of distinct $L_1,L_2 \in \mc{C}$. 
\end{claim}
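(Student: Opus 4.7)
The plan is to argue by contradiction: suppose distinct $L_1, L_2 \in \mc{C}$ satisfy $|L_1 \cap L_2| \geq 3$, and split according to whether $|L_1 \cap L_2| = 4$ or $|L_1 \cap L_2| = 3$. In the first case $|L_1 \cup L_2| = 6$, and the only possible non-edge of $G[L_1 \cup L_2]$ joins the unique vertex of $L_1 \setminus L_2$ with the unique vertex of $L_2 \setminus L_1$. So $G[L_1 \cup L_2]$ contains $K_6^-$ (or equals $K_6$), and since $K_6^\vee$ is a subgraph of $K_6^-$ (delete one further edge sharing a vertex with the already missing edge), $G$ has a $K_6^\vee$ subgraph, contradicting \cref{c:K6vee}.

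In the second case, write $X = L_1 \cap L_2$, $L_1 \setminus L_2 = \{a_1, a_2\}$, $L_2 \setminus L_1 = \{b_1, b_2\}$. Every pair of vertices in $L_1 \cup L_2$ is adjacent except possibly some of the four pairs $\{a_i, b_j\}$. A brief case analysis on these potential non-edges reveals that the only configuration in which $G[L_1 \cup L_2]$ does \emph{not} already contain $K_7^\vee$ as a subgraph is when the missing edges form a perfect matching in $\{a_1, a_2\} \times \{b_1, b_2\}$ --- since $0$ or $1$ missing edges give $K_7$ or $K_7^-$ (both containing $K_7^\vee$), two missing edges sharing a vertex give $K_7^\vee$ itself, and $3$ or $4$ missing edges contain two sharing a vertex by pigeonhole. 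After relabeling, one may therefore assume $a_1 b_2, a_2 b_1 \notin E(G)$ while $a_1 b_1, a_2 b_2 \in E(G)$, so $G[L_1 \cup L_2] \simeq K_7^=$; otherwise the $K_7^\vee$-subgraph contradicts $G$ being $K_7^\vee$-minor-free.

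The main obstacle is handling this matching subcase, where no $K_7^\vee$-subgraph is visible. I would resolve it via a connectivity-based path contraction. By \cref{c:7connected} $G$ is $7$-connected, and by \cref{c:cliqueunion} $|V(G)| \geq 18$, so deleting the five vertices $X \cup \{a_2, b_1\}$ leaves $G$ connected. There is therefore a path $P$ from $a_1$ to $b_2$ in $G \setminus (X \cup \{a_2, b_1\})$, which has length at least $2$ since $a_1 b_2 \notin E(G)$. Taking $V(P) \setminus \{b_2\}$ as the bag for $a_1$ and singleton bags for the other six vertices of $L_1 \cup L_2$ produces a minor of $G$ in which every pair among the seven representative vertices is adjacent except $\{a_2, b_1\}$, i.e., a $K_7^-$-minor. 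Since $K_7^\vee$ is a subgraph of $K_7^-$, $G$ has a $K_7^\vee$-minor, contradicting the minimal-counterexample setup.
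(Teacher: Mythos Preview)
Your argument has a gap in the $|L_1\cap L_2|=3$ case when three or four of the pairs $a_ib_j$ are non-edges. There $G[L_1\cup L_2]$ has strictly fewer edges than $K_7^{\vee}$, so it cannot contain $K_7^{\vee}$ as a subgraph; the pigeonhole observation that two non-edges share a vertex describes the \emph{complement} pattern of $K_7^{\vee}$, not a subgraph inclusion. The fix is short: if some $a_i$ (say) misses both $b_1$ and $b_2$, then $X\cup\{a_i,b_1,b_2\}$ induces a $K_6^{\vee}$ in $G$, contradicting \cref{c:K6vee}. Hence at most two of the $a_ib_j$ are non-edges, and your dichotomy (two non-edges sharing a vertex versus a perfect matching of non-edges) is then genuinely exhaustive. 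With this patch, the remainder of your proof --- the single path from $a_1$ to $b_2$ in $G\setminus(X\cup\{a_2,b_1\})$ and the resulting $K_7^-$-minor --- is correct.

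For comparison, the paper handles $|L_1\cap L_2|=3$ without any edge inventory on $G[L_1\cup L_2]$: since $G\setminus(L_1\cap L_2)$ is $4$-connected it contains two vertex-disjoint paths $P_1,P_2$ from $L_1\setminus L_2$ to $L_2\setminus L_1$, and a further path $Q$ linking $P_1$ to $P_2$ while avoiding one chosen endpoint of each is found using $2$-connectivity of $G$ minus five vertices; contracting $P_1,P_2,Q$ appropriately yields $K_7^-$ on $L_1\cup L_2$ regardless of which $a_ib_j$ were present. Your route trades this Menger-type construction for a case split plus a single path, which is a touch more elementary in the connectivity invoked but requires the extra bookkeeping you stumbled over.
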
	
\begin{proof} 
If $|L_1 \cap L_2|=4$ then $L_1 \cup L_2$ induces a $K_6^-$ subgraph in $G$, contradicting  \cref{c:K6vee}. 

Suppose now $|L_1 \cap L_2|=3$. As $G \setminus (L_1 \cap L_2)$ is $4$-connected by \cref{c:7connected}, there exist two vertex disjoint paths $P_1,P_2$ in  $G \setminus (L_1 \cap L_2)$  each with one end in $L_1 - L_2$ and the other in $L_2 - L_1$. Let $x_i$ and $y_i$ denote the ends of $P_i$ in  $L_1 - L_2$  and  $L_2 - L_1$, respectively. As $G \setminus (L_1 \cap L_2) \setminus \{x_1,y_2\}$ is $2$-connected there exists a path $Q$ in this graph with one end in $V(P_1)$ another in $V(P_2)$ and, otherwise, disjoint from $P_1 \cup P_2$. Contracting each of  $P_1 \setminus \{x_1\}$,  $P_2 \setminus \{y_2\}$ to as single vertex and contracting all but one edges of $Q$, we obtain a minor of $G$ inducing $K_7^-$ on $L_1 \cup L_2$, a contradiction, implying the claim.
\end{proof}

As $G$ has no $K_7$ minor, \cref{t:KLNZ} implies the following.

\begin{claim}\label{c:klnz}  $|L_1 \cup L_2 \cup L_3| \leq 11$ for all $L_1,L_2,L_3 \in \mc{C}$.
\end{claim}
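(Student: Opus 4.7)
The claim follows almost immediately from \cref{t:KLNZ} applied with $k=5$. The plan is to verify the hypotheses of that theorem and derive a contradiction from its conclusion.

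First, note that $G$ is $7$-connected by \cref{c:7connected}, so the connectivity hypothesis $(k+2)$-connected of \cref{t:KLNZ} with $k=5$ is satisfied. The three $5$-cliques $L_1,L_2,L_3 \in \mathcal{C}$ are, by definition, $k$-cliques with $k=5$.

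Suppose for a contradiction that $|L_1 \cup L_2 \cup L_3| \geq 12 = 3k-3$. Then \cref{t:KLNZ} yields a $K_{k+2} = K_7$ minor of $G$. Since $K_7^{\vee}$ is a subgraph of $K_7$, any $K_7$-minor of $G$ is in particular a $K_7^{\vee}$-minor, contradicting the assumption that $G$ has no $K_7^{\vee}$-minor. Therefore $|L_1 \cup L_2 \cup L_3| \leq 11$, as claimed.

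There is no real obstacle here: the statement is essentially a direct specialization of \cref{t:KLNZ} combined with the trivial observation that excluding a $K_7^{\vee}$-minor in particular excludes a $K_7$-minor.
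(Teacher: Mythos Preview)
Your proof is correct and follows the same approach as the paper, which simply notes that since $G$ has no $K_7$ minor, \cref{t:KLNZ} (with $k=5$) immediately gives the claim.
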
	

\begin{claim}\label{c:clique2}  $|L_1 \cap L_2| \geq 1$ for all $L_1,L_2 \in \mc{C}$. 
\end{claim}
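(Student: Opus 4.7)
The approach is to argue by contradiction and produce an explicit $K_7$-minor from the hypothesis $L_1 \cap L_2 = \emptyset$. First I would use the structural claims already established to locate a ``bridge'' 5-clique. Specifically, from $|L_1 \cup L_2| = 10$ and \cref{c:cliqueunion} there must exist a 5-clique $L_3 \in \mc{C}$ distinct from $L_1, L_2$; \cref{c:klnz} then forces $|L_3 \setminus (L_1 \cup L_2)| \leq 1$, and combined with \cref{c:clique1} (which bounds $|L_3 \cap L_i| \leq 2$) this pins down $|L_3 \cap L_1| = |L_3 \cap L_2| = 2$. Writing $L_1 = \{a_1, \ldots, a_5\}$ and $L_2 = \{b_1, \ldots, b_5\}$ with $L_3 \cap L_1 = \{a_1, a_2\}$ and $L_3 \cap L_2 = \{b_1, b_2\}$, the clique $L_3$ supplies all four crossing edges $a_i b_j$ for $i, j \in \{1, 2\}$.

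Next I would exploit the $7$-connectivity of $G$ from \cref{c:7connected}: the graph $G - \{a_1, a_2, b_1, b_2\}$ is $3$-connected, so Menger's theorem provides three pairwise vertex-disjoint paths $Q_3, Q_4, Q_5$ joining the disjoint triples $\{a_3, a_4, a_5\}$ and $\{b_3, b_4, b_5\}$; up to relabeling, $Q_i$ has endpoint $a_i$ and another endpoint in $\{b_3, b_4, b_5\}$.

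Finally I would verify that the seven sets
\[
\{a_1\},\ \{a_2\},\ V(Q_3),\ V(Q_4),\ V(Q_5),\ \{b_1\},\ \{b_2\}
\]
constitute a $K_7$-model in $G$. They are pairwise disjoint and each is connected. Every pair of bags is joined by an edge of $G$: pairs among the first five bags via $L_1$, pairs among the last five via $L_2$ (noting that each $V(Q_i)$ contains a vertex of $L_2$), and the four remaining pairs $\bigl(\{a_i\}, \{b_j\}\bigr)$ with $i, j \in \{1, 2\}$ via the crossing edges supplied by $L_3$. This yields a $K_7$-minor, and hence a $K_7^{\vee}$-minor, contradicting the hypothesis on $G$.

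The principal obstacle is the opening structural step: recognizing that \cref{c:cliqueunion}, \cref{c:klnz}, and \cref{c:clique1} collectively guarantee a 5-clique meeting each of $L_1, L_2$ in exactly two vertices, thereby automatically producing a $K_{2,2}$ of direct edges between $L_1$ and $L_2$. Once this is in hand, the Menger routing and the bag assembly are essentially forced.
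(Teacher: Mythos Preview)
Your argument is correct and shares the paper's opening move: both locate a third clique $L_3$ meeting each of $L_1,L_2$ in exactly two vertices via \cref{c:cliqueunion}, \cref{c:klnz}, and \cref{c:clique1}, then link the remainders of $L_1$ and $L_2$ by disjoint paths through Menger's theorem. The execution of the linking step differs, however. The paper deletes all of $L_3$, uses the (weaker) $2$-connectivity of $G\setminus L_3$ to find two disjoint $(L_1-L_3)$--$(L_2-L_3)$ paths, contracts each to a single vertex, and exhibits a $K_7^{\vee}$ on $L_3$ together with these two contracted vertices --- the unique vertex of $L_3\setminus(L_1\cup L_2)$ serves as the apex of the missing wedge. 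You instead delete only $L_3\cap(L_1\cup L_2)=\{a_1,a_2,b_1,b_2\}$, use $3$-connectivity to find three paths, and assemble an actual $K_7$-model, discarding the extra vertex of $L_3$ entirely. Your route yields a slightly stronger conclusion ($K_7$ rather than $K_7^{\vee}$) at the price of one more unit of connectivity, which is freely available; the paper's route is more frugal with connectivity and makes full use of the clique $L_3$.
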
	
\begin{proof}
Suppose for a contradiction that there exist $L_1,L_2 \in \mc{C}$ such that $L_1 \cap L_2 = \emptyset$. By \cref{c:cliqueunion} there exists $L_3 \in \mc{C}$ such that $L_3 \not \subseteq L_1 \cup L_2$. By \cref{c:klnz} we have $|L_3 - (L_1 \cup L_2)| =1$, and by \cref{c:clique1} it follows that $|L_3 \cap L_2|=|L_3 \cap L_1|=2$. As $G \setminus L_3$ is $2$-connected there exist two vertex disjoint paths $P_1,P_2$ in  $G \setminus L_3$ each with one end in $L_1 - L_3$ and the other in $L_2 - L_3$. Contracting the edges of these paths we obtain a minor of $G$ with a $K_7^{\vee}$ subgraph induced by $L_3$ and the two vertices obtained by contraction, yielding the desired contradiction. 	 
\end{proof}

\begin{claim}\label{c:clique3} If $|L_1 \cap L_2| = 2$ for some $L_1,L_2 \in \mc{C}$ then $|L_1 \cap L_2 \cap L_3| \neq \emptyset$ for every $L_3 \in \mc{C}$.
\end{claim}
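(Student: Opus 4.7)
The plan is to suppose for contradiction that $L_3 \in \mc{C}$ satisfies $L_1 \cap L_2 \cap L_3 = \emptyset$, and derive either a violation of \cref{c:clique1} or the existence of a $K_7^{\vee}$-minor of $G$. Write $Z = L_1 \cap L_2 = \{z_1, z_2\}$. By \cref{c:clique2}, both $L_3 \cap L_1$ and $L_3 \cap L_2$ are non-empty; by hypothesis they lie in $L_1 \setminus Z$ and $L_2 \setminus Z$, respectively, and combined with \cref{c:clique1} this gives $1 \le |L_3 \cap L_i| \le 2$ for $i \in \{1,2\}$. The casework then splits on the pair $(|L_3 \cap L_1|, |L_3 \cap L_2|)$.

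If $\max(|L_3 \cap L_1|, |L_3 \cap L_2|) \ge 2$, I assume $|L_3 \cap L_1| \ge 2$ by symmetry, choose distinct $x_1, x_2 \in L_3 \cap L_1$ and any $y_1 \in L_3 \cap L_2$, and consider $L^{*} := \{z_1, z_2, x_1, x_2, y_1\}$. The pairs inside $\{z_1, z_2, x_1, x_2\}$ are edges of $L_1$, the pairs $z_iy_1$ are edges of $L_2$, and the pairs $x_iy_1$ are edges of $L_3$, so $L^{*} \in \mc{C}$; but $|L^{*} \cap L_1| = 4$, contradicting \cref{c:clique1}.

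In the remaining case $|L_3 \cap L_1| = |L_3 \cap L_2| = 1$, write $L_1 = \{z_1, z_2, x_1, x_2, x_3\}$ and $L_3 = \{x_1, y_1, w_1, w_2, w_3\}$, with $x_1$ the unique element of $L_3 \cap L_1$, $y_1$ the unique element of $L_3 \cap L_2$, and $w_1, w_2, w_3 \notin L_1 \cup L_2$. Here I would construct a $K_7^{\vee}$-minor directly. By \cref{c:7connected}, the graph $G - \{x_1, y_1\}$ is $5$-connected, so by the fan version of Menger's theorem there exist four paths $P_1, P_2, P_3, P_4$ in it starting at $w_1$ and ending at $z_1, z_2, x_2, x_3$, respectively, pairwise sharing only the vertex $w_1$. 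I then take the seven branch sets $A_1 = \{z_1\}$, $A_2 = \{z_2\}$, $A_3 = \{x_1\}$, $A_4 = \{y_1\}$, $A_5 = \{x_2\}$, $A_6 = \{x_3\}$, and $A_7 = (V(P_1) \cup V(P_2) \cup V(P_3) \cup V(P_4)) \setminus \{z_1, z_2, x_2, x_3\}$; these are pairwise disjoint (the $P_i$ avoid $x_1, y_1$, and internal vertices of the $P_i$ avoid the endpoints $w_1, z_1, z_2, x_2, x_3$ by internal disjointness), and $A_7$ is connected because every $P_i$ passes through $w_1$.

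To verify this is a $K_7^{\vee}$-minor, note that every pair among $A_1, \ldots, A_6$ is adjacent via an edge of $L_1$, $L_2$, or $L_3$, except possibly $A_4 A_5$ and $A_4 A_6$, which correspond to the potentially missing edges $y_1x_2$ and $y_1x_3$. Moreover $A_7$ is adjacent to each $A_j$ with $j \in \{1, 2, 5, 6\}$ because the corresponding path $P_i$ ends with an edge from a vertex of $A_7$ to the endpoint in $A_j$, and to $A_3, A_4$ via $x_1w_1, y_1w_1 \in L_3$. Since the only potentially missing edges share the branch set $A_4$, the induced minor is $K_7$, $K_7^{-}$, or $K_7^{\vee}$, each of which contains $K_7^{\vee}$ as a subgraph, contradicting $K_7^{\vee}$-minor-freeness of $G$. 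The main obstacle is precisely this last step: arranging the seven branch sets so that the unforced adjacencies are exactly the two incident to $\{y_1\}$, producing the required ``$\vee$''-pattern of two missing edges with a common endpoint.
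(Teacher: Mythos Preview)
Your proof is correct, but it takes a longer route than the paper's. The paper handles both of your cases at once with a single observation: pick any $v\in L_2\cap L_3$; since $v\notin L_1$ (else $v\in L_1\cap L_2\cap L_3$), the set $L_1\cup\{v\}$ has six vertices, $v$ is adjacent to both vertices of $L_1\cap L_2$ (via $L_2$) and to every vertex of $L_1\cap L_3$ (via $L_3$), hence $v$ has at least three neighbours in $L_1$ and $G[L_1\cup\{v\}]$ contains $K_6^{\vee}$, contradicting \cref{c:K6vee}.

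Your Case~1 is essentially this argument, just phrased through \cref{c:clique1} rather than \cref{c:K6vee}: when $|L_3\cap L_1|\ge 2$ you get four neighbours of $y_1$ in $L_1$, i.e.\ a $K_6^-$ on $L_1\cup\{y_1\}$, which is more than needed. Had you relaxed the target from ``new $5$-clique meeting $L_1$ in four vertices'' to ``$K_6^{\vee}$ subgraph on $L_1\cup\{y_1\}$'', the same reasoning would have covered Case~2 as well (three neighbours $z_1,z_2,x_1$ suffice), and the Menger-based construction of a $K_7^{\vee}$-minor would have been unnecessary. That construction is valid, but it duplicates work already packaged in \cref{c:K6vee}, whose proof is itself a one-line contraction argument yielding $K_7^{\vee}$.
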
	
\begin{proof}
	Suppose for a contradiction that there exist $L_1,L_2,L_3 \in \mc{C}$ such that $|L_1 \cap L_2|=2$ and  $|L_1 \cap L_2 \cap L_3| = \emptyset$. By \cref{c:clique2} we have $|L_1 \cap L_3|,|L_2 \cap L_3| \geq 1$. Let $v \in L_2 \cap L_3$ be arbitrary. Then $v$ is adjacent to the vertices in $L_2 \cap L_1$ and $L_1 \cap L_3$. It follows that $G[L_1 \cup \{v\}]$ has a $K_6^{\vee}$ subgraph, contradicting \cref{c:K6vee}. 
\end{proof}	

\begin{claim}\label{c:clique4} Let $L_1,L_2,L_3 \in \mc{C}$ be pairwise distinct then at least two of their pairwise intersections have size two and the remaining one has size one.
\end{claim}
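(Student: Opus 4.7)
The plan is to case-split on the multiset $\{a_{12}, a_{13}, a_{23}\}$, where $a_{ij}:=|L_i \cap L_j|$. By \cref{c:clique1} and \cref{c:clique2} each $a_{ij} \in \{1,2\}$, so I must rule out the three ``bad'' configurations: (a) all three equal $1$; (b) exactly two equal $1$ and one equals $2$; and (c) all three equal $2$.

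Cases (a) and (b) will be eliminated using only edge-counting information, via the inclusion–exclusion identity
\[
|L_1 \cup L_2 \cup L_3| = 15 - (a_{12}+a_{13}+a_{23}) + |L_1 \cap L_2 \cap L_3|,
\]
compared against the bound $|L_1 \cup L_2 \cup L_3| \leq 11$ from \cref{c:klnz}. In case (a), the triple intersection has size $0$ or $1$ (since two size-$1$ pairwise intersections can share at most one vertex), forcing the union to have size at least $12$. In case (b), say $a_{12}=2$ and $a_{13}=a_{23}=1$; \cref{c:clique3} forces the triple intersection to be nonempty, and any element of it must coincide with the unique elements of both $L_1 \cap L_3$ and $L_2 \cap L_3$, so $|L_1 \cap L_2 \cap L_3|=1$ and the union again has size $12$, contradicting \cref{c:klnz}.

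Case (c) is the main obstacle and genuinely requires the structural tools \cref{t:KT} and \cref{c:K6vee}, since the edge-count alone does not suffice. By \cref{c:clique3} the triple intersection has size $1$ or $2$. If the size is $2$, the three pairwise intersections all coincide with the triple intersection, placing us exactly in the hypothesis of \cref{t:KT}; this yields a $K_7$-minor and hence a $K_7^{\vee}$-minor, contradicting the hypothesis on $G$. If the size is $1$, I write $L_1 \cap L_2 \cap L_3=\{v\}$ and $L_1 \cap L_2=\{v,a\}$, $L_1 \cap L_3=\{v,b\}$, $L_2 \cap L_3=\{v,c\}$; the vertices $a,b,c$ are forced to be pairwise distinct (otherwise the triple intersection would grow beyond $\{v\}$), and every pair among $\{v,a,b,c\}$ is adjacent because each such pair lies together in some $L_i$. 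Writing $L_1 = \{v,a,b,x_1,x_2\}$, I then examine the induced subgraph on $S:=\{v,a,b,c,x_1,x_2\}$: its only potential nonedges are $cx_1$ and $cx_2$. The delicate final dichotomy is that if either of these is an edge, say $cx_1$, then $\{v,a,b,c,x_1\}$ is a new $5$-clique intersecting $L_3$ in $\{v,b,c\}$, violating \cref{c:clique1}; while if both are nonedges, then $G[S] \simeq K_6^{\vee}$, violating \cref{c:K6vee}. Either branch gives the desired contradiction, completing the elimination of (c).
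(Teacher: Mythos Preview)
Your proof is correct and follows essentially the same route as the paper's: inclusion--exclusion against \cref{c:klnz} kills cases (a) and (b), and in case (c) the triple intersection splits into $|Z|=2$ (handled by \cref{t:KT}) and $|Z|=1$ (handled by \cref{c:K6vee}).

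The one place you work harder than necessary is the final dichotomy in the $|Z|=1$ subcase. You split on whether $cx_1$ or $cx_2$ is an edge, invoking \cref{c:clique1} in one branch and \cref{c:K6vee} in the other. But \cref{c:K6vee} forbids $K_6^{\vee}$ as a \emph{subgraph}, not just as an induced subgraph; so once you have established that the only possible non-edges of $G[S]$ are $cx_1$ and $cx_2$ (two edges with a common end), $G[S]$ already contains a $K_6^{\vee}$ subgraph regardless of whether those non-edges are actually missing, and you are done immediately. This is exactly how the paper dispatches this subcase in one line (writing $S = L_1 \cup (L_2 \cap L_3)$). Your detour through \cref{c:clique1} is valid but avoidable.
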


\begin{proof} By Claims \ref{c:clique1} and \ref{c:clique2} we have $1 \leq |L_i \cap L_j| \leq 2$ for all $\{i,j\} \subseteq \{1,2,3\}$. If $|L_1 \cap L_2|=|L_2 \cap L_3| = |L_1 \cap L_3|=1$ then $|L_1 \cup L_2 \cup L_3| \geq 12$, contradicting \cref{c:klnz}. Thus we assume $|L_1 \cap L_2|=2$. If $|L_2 \cap L_3|=|L_1 \cap L_3| = 1$ then we still have $|L_1 \cup L_2 \cup L_3| \geq 12$ as $|L_1 \cap L_2 \cap L_3| \neq \emptyset$ by \cref{c:clique3}.

It remains to show exclude the case $|L_1 \cap L_2|=|L_2 \cap L_3|=|L_1 \cap L_3| = 2$. Let $Z = L_1 \cap L_2 \cap L_3$. If $|Z|=1$ then $L_1 \cup (L_2 \cap L_3)$ induces a supergraph of $K_7^{\vee}$, contradicting \cref{c:K6vee}. Finally, if $|Z|=2$ then $G$ has a $K_7$ minor by Lemma~\ref{t:KT}, again a contradiction.
\end{proof}	

We are now ready to finish the proof of \cref{t:main}. By Claims \ref{c:cliqueunion} and \ref{c:klnz} we have $|\mc{C}| \geq 5$. Let $H$ be a graph with $V(H) = \mc{C}$ and $L_1L_2 \in E(H)$ for $L_1,L_2 \in \mc{C}$ if and only if $|L_1 \cap L_2 |=2$. By \cref{c:clique4} the subgraph of $H$ induced by any three vertices has exactly two edges. Thus $H$ must be bipartite, as every induced odd cycle contains a triple of vertices inducing one or three edges, and $H$ can not contain independent set of size three. It follows that $4  \geq |V(H)|=|\mc{C}|$, a contradiction, finishing the proof of \cref{t:main}. 

\section{Concluding remarks}\label{s:remarks}

\noindent {\bf W}e have shown that every graph with no 	$K_7^{\vee}$-minor is $6$-colorable. It is natural to wonder whether similar methods could be used to prove  the symmetrical statement replacing $K_7^{\vee}$ by  $K_7^{=}$, the other graph obtained from $K_7$ by deleting two edges.

\begin{conj}\label{c:K7=}
	Every graph with no  $K_7^{=}$-minor is $6$-colorable.
\end{conj}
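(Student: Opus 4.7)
The plan is to mirror the three-stage architecture used to prove \cref{t:main}: reduce to a $7$-contraction-critical counterexample $G$, prove an extremal theorem controlling $|E(G)|$, and then combine $5$-cliques from neighborhoods of low-degree vertices into the forbidden minor. By \cref{t:Mader}, such a $G$ is automatically $7$-connected. One small but useful observation: by \cref{t:main} applied to $G$, the counterexample must contain a $K_7^{\vee}$-minor, since otherwise it has no $K_7^{\vee}$-minor and is $6$-colorable. So one works throughout in the restrictive regime where $G$ has a $K_7^{\vee}$-minor but no $K_7^{=}$-minor, which may be exploited to shorten some of the casework.

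The main obstacle is proving an extremal theorem analogous to \cref{t:extremal}: every $4$-connected graph $G$ with $|E(G)| \geq 4|V(G)| - c$ contains a $K_7^{=}$-minor, for some constant $c$ and a (possibly empty) exceptional list. Note that $K_{2,2,2,2}$ is \emph{not} an exception here, since contracting any edge of $K_{2,2,2,2}$ produces exactly $K_7^{=}$; so the exceptional list should be strictly shorter than for $K_7^{\vee}$, and possibly $c$ can be improved as well. The proof will require rooted-minor results parallel to those in \cref{s:rooted} but aimed at producing minors with two \emph{disjoint} non-edges rather than two non-edges sharing a common end. In place of \cref{l:K42star}, one likely wants a rooted model of a graph on six vertices such as $K_{2,2,2}$, or a rooted $K_{4,2}$-model with prescribed matching structure on the two non-root bags. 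Adapting the inductive argument of \cref{l:K42star} and the delicate casework of \cref{t:extremal} to this new target is where most of the novel technical work lies.

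With such an extremal theorem in hand, the counting argument of \cref{c:degree7vtx} again produces many degree-$7$ vertices, and the Moser-spindle/Kempe-chain argument of \cref{c:nbrhood} should extend to force a $K_5$ in the closed neighborhood of each such vertex. Here the disjoint-non-edge structure of $K_7^{=}$ is actually helpful: the two non-edges $u_3'u_4'$ and one of $\{u_3u_3',u_4u_4'\}$ of the Moser spindle are already disjoint, so a single application of \cref{t:KM2019} to an appropriate double contraction should produce either a $K_5$ in $N[v]$ or directly a $K_7^{=}$-minor. Finally, the pairwise intersection analysis of Claims \ref{c:clique1}--\ref{c:clique4} must be replayed with $K_7^{=}$ in place of $K_7^{\vee}$: the merges of two or three $5$-cliques now need to yield two disjoint non-edges on seven contracted bags, which changes the admissible intersection patterns $|L_i \cap L_j|$ and may require constructing additional vertex-disjoint paths between the cliques via the $7$-connectivity of $G$. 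I expect the resulting constraints, combined with \cref{t:KLNZ} and \cref{t:KT}, to again force $\mathcal{C}$ to have bounded size and contradict the lower bound $|\bigcup_{L \in \mathcal{C}} L| \geq 18$ coming from the extremal step.
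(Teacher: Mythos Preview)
The statement you are attempting to prove is \cref{c:K7=}, which in the paper is an \emph{open conjecture}, not a theorem. There is no proof in the paper to compare against; the paper explicitly presents it as unresolved and discusses obstructions to the obvious approach.

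Your plan contains a concrete error at its core step. You propose to prove an extremal theorem of the form ``every $4$-connected graph $G$ with $|E(G)| \geq 4|V(G)| - c$ has a $K_7^{=}$-minor, with a finite exceptional list,'' and you even suggest the exceptional list should be shorter than for $K_7^{\vee}$. This is false, and the paper gives an explicit infinite family of counterexamples in \cref{s:remarks}: take $V(G_n) = A \cup B$ with $|A|=4$, $|B|=n-4$, every vertex of $A$ adjacent to every other vertex, and $E(G_n[B])$ a matching of size $\lfloor (n-4)/2 \rfloor$. Each $G_n$ is $4$-connected, has $|E(G_n)| = 4n + \lfloor n/2 \rfloor - 12 \gg 4n - c$, and has no $K_7^{=}$-minor (deleting the four vertices of $A$ from any minor leaves a matching). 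So the $4$-connected analogue of \cref{t:extremal} simply does not exist for $K_7^{=}$, no matter what constant $c$ or finite exceptional list you allow. Your observation that $K_{2,2,2,2}$ is not an exception is correct but beside the point.

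The paper's suggested workaround is to raise the connectivity: \cref{c:K7=extremal} conjectures that every $5$-connected $G$ with $|E(G)| \geq 4|V(G)| - 9$ has a $K_7^{=}$-minor unless $G \simeq K_6$. Since a $7$-contraction-critical graph is $7$-connected by \cref{t:Mader}, this would be enough for the application, and the rest of your outline (degree-seven vertices, Moser-spindle/Kempe argument, clique-intersection analysis) is at least plausible modulo that conjecture. But \cref{c:K7=extremal} is itself open, and the rooted-minor machinery of \cref{s:rooted} does not obviously adapt, because the proof of \cref{t:extremal} repeatedly exploits $4$-separations to induct, and the $G_n$ family shows that $4$-connectivity alone is not enough to force $K_7^{=}$.
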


The main difficulty in proving Conjecture \ref{c:K7=} lies in proving an analogue of \cref{t:extremal}. The direct analogue does not hold, as the following example shows. Let $G_n$ be a graph with $V(G_n)=A \cup B$, where $|A|=4, |B|=n-4, A\cap B = \emptyset$, every vertex of $A$ is adjacent to every other vertex of $G_n$ and $E(G_n[B])$ is a matching of size $\lfloor (n-4)/2 \rfloor$. Then $G_n$ is $4$-connected, $|V(G_n)|=n$ and $|E(G_n)| = 4n + \lfloor n/2 \rfloor - 12$. Moreover for every minor $H$ of $G$ there exists $X \subseteq V(H)$ with $|X| \leq 4$, such that $E(H \setminus X)$ is a matching. In particular, $K_7^{=}$ is not a minor of $G_n$. 

It is possible that a version of \cref{t:extremal} for $K_7^{=}$-minors holds if we require higher connectivity.

\begin{conj}\label{c:K7=extremal}
	Every $5$-connected graph $G$ with $|E(G)| \geq 4|V(G)| - 9$ has a $K_7^{=}$-minor, unless $G \simeq K_{6}$.
\end{conj}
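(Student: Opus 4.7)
The plan is to mimic the overall scheme that proved \cref{t:extremal}: take $G$ to be a minor-minimal counterexample, so $G$ is $5$-connected with $|E(G)| \geq 4|V(G)|-9$, has no $K_7^=$-minor, and is not isomorphic to $K_6$. I would then derive structural restrictions on $G$ until a contradiction emerges.

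First I would settle the base cases. The edge condition forces $|V(G)| \geq 6$, and since $G \not\simeq K_6$ we have $|V(G)| \geq 7$. If $|V(G)| = 7$ then $G$ has at most two non-edges: if they share an end then $G \simeq K_7^{\vee}$, which has a vertex of degree $4$ and hence violates $5$-connectivity; otherwise $G$ contains $K_7^=$ as a spanning subgraph. A more elaborate but analogous analysis for $|V(G)| = 8$, leveraging that contracting an edge in a dense $5$-connected graph produces a graph with few non-edges of matching type, should dispose of that case as well.

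For the inductive part, I would develop a rooted-minor tool playing the role that \cref{l:K42star} plays for $K_7^{\vee}$. The natural target is a rooted $K_5$-model on a $5$-tuple of roots $Z$ (or, equivalently, a rooted $K_{3,3}^*$-model): indeed, $K_7^=$ decomposes as two copies of $K_5$ glued along a common triangle, so a rooted $K_5$ on $Z$ in $G[A]$ fuses across a $5$-separation with a similar structure in $G[B]$ to produce a $K_7^=$-minor. With such a tool one excludes all small separations in the spirit of \cref{c:no4sep2K4} and \cref{c:no4sepNotV}: any $k$-separation with $k \leq 5$ and both sides nontrivial contradicts either the rooted-minor extremal bound or the exclusion of a $K_7^=$-minor. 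Edge-counting combined with minor-minimality then tightens $|E(G)|$ to exactly $4|V(G)|-9$ and forces the minimum degree $\delta(G)$ into $\{5,6,7\}$, with many low-degree vertices present. A Dirac-type argument in the spirit of \cref{c:triangles} shows that every edge incident to a low-degree vertex lies in many triangles, so the complement of $G[N(v)]$ at a minimum-degree vertex $v$ has bounded maximum degree and falls into a short explicit list of patterns. For each pattern one finds, using \cref{l:RST24} applied inside $G \setminus N[v]$, internally disjoint external paths that restore the two missing edges of $K_7^=$ and exhibit it as a minor.

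The main obstacle, I expect, is the $\deg(v) = 7$ case. For $K_7^{\vee}$ one needed only a single external path between one chosen pair in $N(v)$, which made the four-terminal version of \cref{l:RST24} sufficient. For $K_7^=$ one must arrange two vertex-disjoint external paths simultaneously, with prescribed endpoints lying in two different non-adjacent pairs in $N(v)$; the standard two-linkage theorem is not directly available at this many terminals under only $5$-connectivity. Overcoming this will likely require either a strengthened five- or six-terminal linkage lemma tailored to the planar-exception structure appearing in \cref{l:RST24}(iii), or an inductive argument that first contracts one non-adjacent pair and recurses on a smaller instance. A secondary difficulty is that the example $G_n$ exhibited immediately after \cref{c:K7=extremal} shows the $5$-connectivity hypothesis is essential and cannot be weakened by edge-counting alone, so this hypothesis must be used actively in every step of the separation analysis rather than merely as a starting hypothesis.
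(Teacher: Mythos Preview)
The statement you are attempting is \emph{Conjecture}~\ref{c:K7=extremal}: the paper states it as an open problem in the concluding remarks and offers no proof. There is therefore nothing in the paper to compare your attempt against, and what you have written is a programme, not a proof---as you yourself acknowledge when you flag the $\deg(v)=7$ case and the missing linkage lemma as unresolved.

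Two concrete issues with the plan as written. First, your structural claim that ``$K_7^{=}$ decomposes as two copies of $K_5$ glued along a common triangle'' is false: two $K_5$'s sharing a triangle give a graph on seven vertices with $17$ edges, while $K_7^{=}$ has $19$. The glued object is $K_7$ minus a four-cycle in the complement, not $K_7$ minus a matching. The correct analogue of the $K^*_{4,2}$ step is different: across a $5$-separation $(A,B)$ with $A\cap B$ made into a clique on one side, contracting a component of $B-A$ to a single vertex on the other already yields $K_7^{-}\supseteq K_7^{=}$; the nontrivial extremal ingredient you actually need is a rooted-$K_5$ (or rooted-$K_{5,1}$) bound on one side, not a $K_{3,3}^*$ bound. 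Second, and more seriously, the paper's proof of \cref{t:extremal} relies on \cref{l:K42star}, which in turn bootstraps from J{\o}rgensen's \cref{l:K42}; you have no analogue of either result for five roots, and there is no indication in the literature that such an extremal bound holds with the constant you would need. Until that lemma exists, the separation analysis cannot start, so this is the real gap---the linkage difficulty you highlight in the $\deg(v)=7$ endgame is downstream of it.
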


The bound in \cref{c:K7=extremal} would be optimal as every graph $G$ obtained from a $5$-connected planar triangulation by adding a universal vertex  satisfies $|E(G)| = 4|V(G)| - 10$ and has no $K_6$-minor, and thus no $K_7^{=}$-minor.

\vskip 10pt

\noindent {\bf T}he following is a natural common strengthening of \cref{t:main} and Conjecture \ref{c:K7=}. 

\begin{conj}\label{c:K7-}
	Every graph with no  $K_7^{-}$-minor is $6$-colorable.
\end{conj}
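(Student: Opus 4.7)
\textbf{Proof plan for \cref{c:K7-}.} The plan is to mirror the strategy of the proof of \cref{t:main}. Suppose for contradiction that $G$ is a minor-minimal counterexample; then $G$ is $7$-contraction-critical and hence $7$-connected by \cref{t:Mader}. Because $K_7^{\vee}$ is (isomorphic to) a subgraph of $K_7^-$, any $K_7^-$-minor of $G$ contains a $K_7^{\vee}$-minor, so our hypothesis is strictly weaker than that of \cref{t:main} and that theorem cannot be invoked directly. However, if $G$ had no $K_7^{\vee}$-minor at all, then \cref{t:main} would give $\chi(G) \leq 6$, a contradiction; hence $G$ does have a $K_7^{\vee}$-minor, and the challenge is to upgrade this witness to a $K_7^-$-minor.

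The main step is to establish an extremal analogue of \cref{t:extremal} for $K_7^-$-minors. The $4$-connected version in its exact form fails: the graphs $G_n$ described in this section in connection with \cref{c:K7=} are $4$-connected, have roughly $4.5|V|$ edges, and contain no $K_7^=$-minor and therefore no $K_7^-$-minor. I would therefore target a statement of the form: \emph{every $5$-connected graph $G$ with $|E(G)| \geq 4|V(G)|-c$ contains a $K_7^-$-minor, unless $G$ belongs to a short exceptional list} which must include $K_{2,2,2,2}$ and likely also small cones over planar triangulations and related blow-ups. The strategy is to extract a $K_7^{\vee}$-model via \cref{t:extremal} with bags $B_1,\ldots,B_7$ in which $B_1$ is the apex bag missing adjacency to $B_2$ and $B_3$, and then to use $5$-connectivity together with \cref{l:RST24} and the surplus edges to reroute a path producing the missing edge from $B_1$ to $B_2$ or $B_3$. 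Failure of this augmentation must expose a small cut, from which the separation-based case analysis of \cref{s:extremal} should deliver one of the listed exceptions.

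Given this extremal bound on $|E(G)|$, the arguments of \cref{s:main} adapt. The bound forces many degree-$7$ vertices in $G$, and \cref{t:Dirac}, the Moser-spindle analysis, and \cref{t:KM2019} from the proof of \cref{c:nbrhood} together supply for each such vertex $v$ a $5$-clique $L \subseteq N[v]$ containing $v$. The intersection pattern of the family $\mathcal{C}$ of these $5$-cliques would then be examined along the lines of \cref{c:K6vee}--\cref{c:clique4}. Each of those claims must be re-proved under the weaker forbidden minor $K_7^-$: for instance \cref{c:K6vee} would have to rule out a $K_6^-$ subgraph every vertex of which is adjacent to a common external component (which would yield a $K_7^-$-minor after contraction), and the pairwise-intersection bounds would follow from analogous enlargements, with the case $|L_1 \cap L_2|=2$ of \cref{c:clique3} handled via \cref{t:KT}. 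If these refinements go through, the concluding bipartiteness argument of \cref{s:main} delivers the contradiction.

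The principal obstacle is the extremal result. The single-exception form of \cref{t:extremal} is unlikely to persist for $K_7^-$-minors: the threshold $|E| \approx 4|V|$ sits at the boundary of several natural extremal families (blow-ups of $K_6$, apex-planar graphs, $K_{2,2,2,2}$), and producing a complete list requires substantial additional case analysis. A secondary difficulty is that the clique-combining Claims \ref{c:K6vee}--\ref{c:clique4} under the weaker forbidden minor $K_7^-$ admit more surviving configurations, so new ideas -- perhaps iterating the argument inside a denser subfamily of $\mathcal{C}$, or invoking stronger rooted-minor tools beyond those of \cref{s:rooted} -- may be needed to rescue the final bipartiteness step.
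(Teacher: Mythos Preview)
The statement you are addressing, \cref{c:K7-}, is a \emph{conjecture} in the paper, not a theorem; the paper offers no proof. Its only commentary is a single sentence noting that proving \cref{c:K7-} via the strategy of this paper would require a corresponding extremal result, and speculating that a variant of \cref{c:K7=extremal} for $K_7^-$-minors might hold with a longer list of small exceptional graphs. Your outline is therefore not to be compared against a paper proof, because none exists; what you have written is essentially an expansion of that one-sentence remark, correctly identifying the missing extremal theorem as the principal obstacle and sketching how the clique-intersection endgame of \cref{s:main} would need to be reworked.

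That said, your document is explicitly a plan, not a proof, and it should not be mistaken for one. Two of the steps you flag as ``adaptations'' are in fact open problems in their own right. First, the extremal statement you target (a $5$-connected analogue of \cref{t:extremal} for $K_7^-$ with a finite exception list) is unproved, and your suggested mechanism---extract a $K_7^{\vee}$-model via \cref{t:extremal} and reroute one path using $5$-connectivity---does not obviously work: \cref{t:extremal} only gives a $K_7^{\vee}$-minor somewhere in $G$, with no control over where the apex bag sits relative to the extra connectivity, and the failure of augmentation need not localize to a small cut in the way you describe. Second, the clique-combining claims do not all survive the weaker hypothesis as smoothly as you suggest; for instance, the analogue of \cref{c:K6vee} under a $K_7^-$-free assumption forbids only a $K_6^-$ (not $K_6^{\vee}$) subgraph, which is a strictly weaker local obstruction and breaks several of the downstream intersection arguments (notably the $|L_1\cap L_2|=3$ case of \cref{c:clique1} and the case analysis in \cref{c:clique3}). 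Your closing paragraph acknowledges both difficulties, which is appropriate, but the upshot is that what you have is a research programme rather than a proof.
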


Again proving \cref{c:K7-} using  the strategy similar to the one used in this paper would require a corresponding extremal result. It is possible that a variant of \cref{c:K7=extremal} holds for $K_7^{-}$-minors with a longer list of small exceptional graphs.

\vskip 10pt
\noindent {\bf F}inally, let us mention existing relaxations of Hadwiger's conjecture for $t>6$ similar to \cref{t:Jakobsen} and \cref{t:main}, primarily due to Zi-Xia Song and co-authors.  

\begin{thm}[Lafferty and Song {\cite{LS2022}}] Every graph with no minor obtained from $K_8$ by deleting four edges is $7$-colorable.
\end{thm}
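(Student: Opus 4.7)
Following the blueprint of the proof of \cref{t:main}, I would consider a minor-minimal counterexample $G$; then $G$ is $8$-contraction-critical. By \cref{t:Mader} it is $7$-connected, and \cref{t:Dirac} applied with $k = 8$ gives $\alpha(G[N[v]]) \leq \deg(v) - 6$ for every $v \in V(G)$. In particular, any degree-$7$ vertex would force $N[v]$ to be a clique of order $8$, so $G$ would contain $K_8$ and hence every graph obtained from $K_8$ by deleting four edges as a subgraph, contradicting the hypothesis. Therefore $\delta(G) \geq 8$.

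The first and most important step is to establish an extremal bound analogous to \cref{t:extremal}: every sufficiently connected graph whose edge count exceeds a suitable linear function of its vertex count must contain some graph obtained from $K_8$ by deleting four edges as a minor, apart from a short list of exceptions. Combined with the minimum-degree bound above, such a result would force $G$ to contain many vertices of low degree, say $8$, $9$, or $10$. For each such vertex \cref{t:Dirac} sharply constrains the independence number of $G[N(v)]$, and a Ramsey-type case analysis on the complement of $G[N(v)]$, refined via the Kempe-chain-to-cycle-minor conversion \cref{t:KM2019} in the spirit of \cref{c:nbrhood} (to dispatch sporadic neighborhoods that lack a large clique), should produce a $6$-clique inside $N[v]$ at most such vertices.

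Once a family $\mc{C}$ of $6$-cliques covering a substantial portion of $V(G)$ is obtained, the endgame mirrors Claims \ref{c:clique1}--\ref{c:clique4}. One bounds pairwise intersections $|L_i \cap L_j|$ via path-linking through the high connectivity of $G$, bounds triple unions via \cref{t:KLNZ} with $k = 6$, and handles the ``two shared clique vertices'' configuration using a $K_8$-minor analogue of \cref{t:KT}. The resulting combinatorial restrictions on the intersection pattern of $\mc{C}$ should be incompatible with the lower bound on $|\mc{C}|$ provided by the extremal step, yielding the final contradiction.

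The main obstacle will be the extremal step. The proof of \cref{t:extremal} in \cref{s:extremal} already requires the full rooted-model toolkit of \cref{s:rooted} --- rooted $K_4$, $K^-_4$, $K_{4,2}$, and $K^*_{4,2}$ models, together with the crossing-paths theorem \cref{l:RST24} --- and an intricate case analysis of the neighborhood structure of a minimum-degree vertex. For the $K_8$-minus-four-edges version one expects to need analogous $5$-rooted extremal results (a rooted-$K_5$-model edge bound, a rooted-$K^-_5$-model existence lemma at the appropriate connectivity, and rooted-$K^*_{5,2}$-model counterparts), together with a considerably longer neighborhood case analysis, since the number of potential ``bad'' complementary neighborhood graphs at a degree-$8$ or $9$ vertex grows substantially compared with the degree-$6$ case considered in \cref{c:degree6}.
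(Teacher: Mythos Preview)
The paper does not prove this theorem at all: it is quoted in \cref{s:remarks} purely as a citation of an external result of Lafferty and Song, with no argument given. There is therefore no ``paper's own proof'' to compare your proposal against.

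As for the proposal itself, it is not a proof but a programme. You explicitly defer the two load-bearing steps --- the extremal bound analogous to \cref{t:extremal} and the neighborhood case analysis analogous to \cref{c:degree6} and \cref{c:nbrhood} --- to future work (``one expects to need'', ``should produce'', ``should be incompatible''). Without an actual extremal theorem forcing some $K_8$-minus-four-edges minor in dense $7$-connected graphs, you have no way to produce the supply of low-degree vertices that drives the rest of the argument, and the clique-intersection endgame never gets off the ground. Moreover, even the parts you do sketch are not quite calibrated to the hypothesis: the theorem excludes \emph{every} graph obtained from $K_8$ by deleting four edges, so any single such minor suffices for a contradiction, which makes both the extremal step and the neighborhood analysis substantially easier than the single-excluded-minor situation of \cref{t:main}; your outline does not exploit this. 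In short, the high-level strategy you describe is plausible and is indeed in the spirit of the Jakobsen/Kawarabayashi--Toft/Lafferty--Song line of work, but what you have written leaves the essential content unestablished.
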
	

\begin{thm}[Lafferty and Song {\cite{LS2022b}}] Every graph with no minor obtained from $K_9$ by deleting six edges is $8$-colorable.
\end{thm}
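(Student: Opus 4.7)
The plan is to adapt the strategy used for \cref{t:main} in this paper. Let $\mc{H}$ denote the (finite) family of isomorphism types of graphs obtained from $K_9$ by deleting six edges, and suppose for contradiction that $G$ is a minor-minimal counterexample, so $\chi(G)=9$, $G$ is $9$-contraction-critical, and $G$ has no $H$-minor for any $H \in \mc{H}$. By \cref{t:Mader}, $G$ is $7$-connected, and standard criticality gives minimum degree at least $8$.

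The first reduction uses \cref{t:Dirac} with $k=9$: every independent set in $G[N[v]]$ has size at most $\deg(v)-7$. If $\deg(v)=8$ then $N[v]$ is a clique of size $9$ and already contains every member of $\mc{H}$ as a subgraph, a contradiction; so the minimum degree of $G$ is at least $9$. For $\deg(v)=9$, Dirac only forces the complement of $G[N[v]]$ on ten vertices to be triangle-free, which via the Ramsey bound $R(4,3)=9$ produces a $K_4$ in $G[N(v)]$ but not necessarily more. Arguing as in \cref{c:nbrhood}, the remaining ``obstruction'' configurations (graphs on ten vertices with no $K_5$ and no independent set of size three) form a short, classifiable list; for each, one contracts a well-chosen pair of non-adjacent vertices of $N(v)$ onto $v$, takes an $8$-coloring of the resulting proper minor, and uses \cref{t:KM2019} to convert Kempe-chain connectivity between color classes into a rooted cycle model. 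Combined with $7$-connectivity this should yield a $K_9^-$-minor of $G$, hence an $H$-minor, ruling out each obstruction and producing a $5$-clique in the neighborhood of every degree-$9$ vertex.

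The main technical obstacle, analogous to \cref{t:extremal}, is an extremal edge count: every sufficiently connected $H$-minor-free graph $G'$ should satisfy $|E(G')| \leq c|V(G')| - d$ outside a short list of sporadic exceptions, with the natural target being $c = 6$, consistent with Mader's bound for $K_9$-minor-free graphs. Proving this would require strengthening the rooted-minor machinery of \cref{s:rooted}: analogues of \cref{l:K4}, \cref{l:K42}, and \cref{l:K42star} for rooted $K_{6,\ell}$- and $K^*_{6,\ell}$-models with up to six roots, pursued through the inductive strategy of \cref{l:K42star} but with substantially more nested cases. Once the bound is in hand, the edge density of $G$ forces many degree-$9$ vertices, each contributing a $5$-clique in its neighborhood by the previous step, and the closing argument mirrors \cref{s:main}: enumerate these cliques, control pairwise intersections via \cref{t:KT}, bound three-clique unions via \cref{t:KLNZ}, and derive a structural (bipartite-like) constraint on the clique-intersection graph that forces a contradiction.

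The hard part will be the extremal step. For $K_7^{\vee}$, the proof already required intricate handling of internally $4$-connected pairs with $K_{2,2,2,2}$ as the unique exception; the $K_9 - 6e$ analogue is likely to involve internally $5$- or $6$-connected pairs and a substantially longer list of exceptional graphs, potentially including apex-over-planar-triangulation constructions and small-vertex blowups of dense graphs. Classifying these exceptions tightly enough to drive the final coloring argument is, I expect, where the bulk of the technical work will lie.
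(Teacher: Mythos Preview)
The paper does not prove this theorem. It appears in \cref{s:remarks} as one of several results quoted from the literature, attributed to Lafferty and Song~\cite{LS2022b}, with no proof or proof sketch given. There is therefore nothing in the paper to compare your proposal against.

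Separately, what you have written is not a proof but a research outline, and you are candid about this: the extremal step analogous to \cref{t:extremal} is left entirely open, and you yourself flag it as ``the hard part''. A few specific points are also shaky. Your Dirac-with-$k=9$ calculation for $\deg(v)=9$ is applied to $G[N[v]]$ on ten vertices, but the useful content is on the nine-vertex graph $G[N(v)]$; $R(4,3)=9$ then only guarantees a $K_4$ there, and your plan to upgrade this to a $K_5$ via a Kriesell--Mohr/Kempe-chain argument is asserted rather than carried out --- the obstruction graphs on nine vertices with independence number two and no $K_5$ are not ``a short, classifiable list'' in any obvious sense. The proposed extremal bound $|E(G')|\le 6|V(G')|-d$ for graphs with no $(K_9-6e)$-minor is speculative, and the rooted-minor lemmas you would need (for $K_{6,\ell}$- and $K^*_{6,\ell}$-models) are not results one can simply cite. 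Finally, the closing clique-intersection argument invokes \cref{t:KT} and \cref{t:KLNZ}, but \cref{t:KT} is stated specifically for $7$-contraction-critical graphs and $5$-cliques; the $9$-critical analogue you need is a different lemma.

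In short: the statement is quoted, not proved, in this paper, and your proposal is a plausible but highly incomplete program rather than a proof.
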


\begin{thm}[Rolek and Song {\cite{RS2017}}] Every graph with no  $K^{\vee}_8$-minor and no $K^{=}_8$-minor is $8$-colorable.
\end{thm}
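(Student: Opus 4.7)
The plan is to mimic the proof strategy of $\cref{t:main}$ with all parameters shifted upward by one. Let $G$ be a minor-minimal counterexample, so $G$ is $8$-contraction-critical. By $\cref{t:Mader}$ it is $7$-connected, and by $\cref{t:Dirac}$ every independent set in $G[N[v]]$ has size at most $\deg(v) - 6$; in particular a vertex of degree $7$ would force $N[v]$ to be a $K_8$-subgraph of $G$, immediately giving a $K_8^\vee$-minor, so $\deg(v) \geq 8$ for every $v \in V(G)$. A connectivity argument identical to that of $\cref{c:K6vee}$ shows, using $7$-connectivity, that $G$ cannot contain a $K_7^\vee$ subgraph, since a contracted component of $G \setminus V(K_7^\vee)$ would supply the extra vertex needed to form a $K_8^\vee$-minor.

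The technical core of the argument is an extremal edge bound playing the role of $\cref{t:extremal}$: that under suitable connectivity, a graph with $|E(G)| \geq 5|V(G)| - c$ edges must contain a $K_8^\vee$- or a $K_8^=$-minor, modulo a short list of exceptional graphs (natural candidates include apex graphs over $5$-connected planar triangulations). I would attempt to derive such a bound by adapting $\cref{s:extremal}$: take a minimum-degree vertex, contract an incident edge, and control the non-trivial small separations that can destroy high connectivity in the contraction by means of rooted-minor lemmas for $5$-element root sets. This calls for $5$-rooted analogues of $\cref{l:K4}$, $\cref{c:RST64}$, and $\cref{l:K42star}$; Wollan's extension of $\cref{l:K42}$ to $K_{t,2}$-models suggests that some of these are at least partially available, but the remainder would have to be proved from scratch. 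This extremal step is the one I expect to be the main obstacle.

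Granted the extremal bound, $G$ has bounded average degree and therefore many vertices of degree in the range $\{8, 9\}$. For a degree-$8$ vertex $v$, the $8$-vertex graph $G[N(v)]$ has independence number at most $2$ and contains no $K_6$ (since a $K_6$ in $N(v)$ together with $v$ would form the forbidden $K_7$-subgraph, hence a $K_7^\vee$-subgraph); a Ramsey-style case analysis then shows that $G[N(v)]$ either contains a $K_5$ or is isomorphic to one of a short list of exceptional graphs, an analogue of the Moser-spindle enumeration used in the proof of $\cref{c:nbrhood}$. In the nondegenerate case, $\{v\} \cup K_5$ yields a $6$-clique containing $v$; in each degenerate case I would contract $v$ along two well-chosen edges, $7$-color the resulting proper minor by minimality of $G$, and apply $\cref{t:KM2019}$ to convert the forced Kempe chains into a rooted $C_k$-model on part of $N(v)$ that, together with the remaining edges in $G[N[v]]$, closes up to a $K_8^\vee$-minor.

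The finishing argument then parallels Claims $\cref{c:clique1}$ through $\cref{c:clique4}$ of $\cref{s:main}$. With a large family $\mathcal{C}$ of $6$-cliques in hand, pairwise intersections $|L_1 \cap L_2|$ are bounded above using $K_7^\vee$-subgraph exclusion and path-routing in the $7$-connected graph $G$, $\cref{t:KLNZ}$ applied with $k = 6$ bounds $|L_1 \cup L_2 \cup L_3|$ from above (which demands $8$-connectivity and would require a preliminary connectivity boost, for instance via a $K_7$-minor-based argument), and the extremal configuration $|L_1 \cap L_2 \cap L_3| = 2$ would be eliminated by an $8$-colorability analogue of the Kawarabayashi-Toft lemma $\cref{t:KT}$. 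A bipartiteness argument on the auxiliary graph whose edges record pairs of cliques intersecting in exactly two vertices then forces $|\mathcal{C}|$ to be too small to cover enough degree-$8$ vertices, producing the final contradiction.
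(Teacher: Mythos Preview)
The paper does not prove this theorem. It appears in \cref{s:remarks} (``Concluding remarks'') purely as a citation of a known result of Rolek and Song~\cite{RS2017}; no argument is given or even sketched. There is therefore nothing in the paper to compare your proposal against.

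As for the proposal itself: what you have written is a plan, not a proof, and you say so yourself at several points. The two most serious gaps are exactly the ones you flag. First, the extremal edge bound (``$|E(G)| \geq 5|V(G)| - c$ forces a $K_8^\vee$- or $K_8^=$-minor'') is the heart of the argument and you do not prove it; the rooted-minor machinery of \cref{s:rooted} is tailored to $4$-element root sets, and lifting it to $5$-rooted analogues is not routine. Second, applying \cref{t:KLNZ} with $k=6$ requires $8$-connectivity, but Mader's theorem only gives $7$-connectivity for an $8$-contraction-critical graph; your parenthetical ``would require a preliminary connectivity boost'' is a genuine obstacle, not a detail. The same applies to the ``$8$-colorability analogue of the Kawarabayashi--Toft lemma \cref{t:KT}'', which you assume without argument. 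Until those pieces are supplied, the proposal is a strategy outline rather than a proof.
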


\begin{thm}[Rolek and Song {\cite{RS2017}}] Every graph with no  $K^-_8$-minor is $9$-colorable.
\end{thm}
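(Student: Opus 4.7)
The plan is to follow the overall strategy of \cref{t:main} shifted up by one level, targeting $9$-colorability and $K^-_8$-minors instead of $6$-colorability and $K^{\vee}_7$-minors. First I would take a minor-minimal counterexample $G$, so $G$ is $9$-contraction-critical with no $K^-_8$-minor, and then derive structural constraints paralleling those established in \cref{s:main}. By \cref{t:Mader}, $G$ is $7$-connected; a small-separation analysis in the spirit of \cref{c:no4sepNotV} and \cref{c:5connected} should push this to $8$-connectivity. Applying \cref{t:Dirac} with $k=9$, every vertex $v$ has independence number of $G[N[v]]$ at most $\deg(v)-7$, so any vertex of degree $8$ would have $N[v] \simeq K_9$, contradicting the absence of a $K^-_8$-minor; hence the minimum degree of $G$ is at least $9$.

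The decisive ingredient, and the step I expect to be the main obstacle, is an extremal bound: every sufficiently connected graph $G$ with $|E(G)| \geq c|V(G)|-c'$ contains a $K^-_8$-minor, apart from a short explicit list of exceptional graphs. These exceptions are likely to include $K_{2,2,2,2,2}$ and apex-triangulation-type constructions, paralleling the role of $K_{2,2,2,2}$ in \cref{t:extremal}. Proving this analogue of \cref{t:extremal} would require extending the rooted-minor machinery of \cref{s:rooted}: an analogue of \cref{l:K42star} for internally $5$-connected pairs and $K^*_{5,2}$- or $K^*_{5,3}$-rooted models, together with an analogue of \cref{l:K4} for rooted $K_5$- or $K^-_5$-models, each induction following the template of the proof of \cref{l:K42star}. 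Identifying the precise list of exceptional graphs, and verifying the induction with the correct edge count, is the technically hardest part of the program.

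Once such an extremal bound is in hand, a counting argument analogous to \cref{c:degree7vtx} produces many vertices of degree close to the minimum (here $9$ or $10$). For each such vertex $v$, a case analysis on the complement of $G[N(v)]$, which has bounded independence number by Dirac, combined with Kempe-chain swaps supplied by \cref{t:KM2019}, should locate a $6$-clique inside $N[v]$, mirroring the Moser-spindle argument of \cref{c:nbrhood}. This yields a family $\mc{C}$ of $6$-cliques in $G$, on which the intersection analysis carried out for $5$-cliques in \cref{s:main} should generalize: distinct members of $\mc{C}$ meet in at most $4$ vertices (else $L_1 \cup L_2$ induces a $K^-_7$ that can be extended to a $K^-_8$-minor by contracting a component of $G \setminus (L_1 \cup L_2)$), in at least one vertex (via connectivity and path contraction), and any three members of $\mc{C}$ have $|L_1 \cup L_2 \cup L_3| \leq 14$ by \cref{t:KLNZ} applied with $k=6$. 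A bipartite-structure argument on the intersection graph of $\mc{C}$, as at the close of \cref{s:main}, would then bound $|\mc{C}|$ by a constant, contradicting the abundance of low-degree vertices and completing the proof.
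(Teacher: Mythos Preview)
The paper does not contain a proof of this statement: it appears only in the concluding remarks as a citation of a result of Rolek and Song, listed alongside several other known relaxations of Hadwiger's conjecture for $t>6$. There is therefore no ``paper's own proof'' to compare your proposal against.

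Evaluated on its own terms, your plan has an off-by-one slip at the outset that propagates through the numerics. A minor-minimal counterexample to $9$-colorability satisfies $\chi(G)\geq 10$, so the graph you work with is $10$-contraction-critical, not $9$-contraction-critical. Consequently \cref{t:Dirac} should be applied with $k=10$, giving that any independent set in $G[N(v)]$ has size at most $\deg(v)-8$ (not $\deg(v)-7$); in particular a vertex of degree $9$ has $\alpha(G[N(v)])\leq 1$, and even a vertex of degree $8$ is already impossible by Mader-type connectivity. Your argument that $\deg(v)=8$ forces $N[v]\simeq K_9$ used the wrong value of $k$.

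More substantively, the program you outline is far heavier than what Rolek and Song actually need. Their proof does not require a new rooted-minor toolkit, a $K^*_{5,2}$ analogue of \cref{l:K42star}, or a Kempe-chain/Moser-spindle analysis: the extremal function for $K_8^-$-minors was already known (it is one of the structural inputs in \cite{RS2017}), and combined with Mader's connectivity theorem and Dirac's independence bound it yields the $9$-colorability directly, via a short minimum-degree/clique argument. The elaborate clique-intersection endgame you sketch (analogues of \cref{c:clique1}--\cref{c:clique4}) is aimed at pinning down the exact chromatic threshold, which is what makes \cref{t:main} hard; for the weaker bound $9$ in place of the conjectured $7$, none of that is needed.
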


\begin{thm}[Rolek {\cite{Rolek2020}}] Every graph  with no no  $K^{\vee}_9$-minor and no $K^{=}_9$-minor is $10$-colorable.
\end{thm}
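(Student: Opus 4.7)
The plan is to adapt the strategy used to prove \cref{t:main}, scaling up all quantities appropriately. I would assume for contradiction that a minor-minimal counterexample $G$ exists; then $G$ is $10$-contraction-critical. By Mader's theorem (\cref{t:Mader}), $G$ is $7$-connected, and I would aim to push the connectivity higher by analyzing small separations as in \cref{c:no4sepNotV} and \cref{c:5connected}. Dirac's theorem (\cref{t:Dirac}) applied with $k=10$ forces every independent set in $G[N[v]]$ to have size at most $\deg(v)-8$, a very restrictive constraint that guarantees large cliques in the neighborhoods of low-degree vertices.

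The first main step would be to establish an extremal result analogous to \cref{t:extremal}: every sufficiently-connected graph with at least roughly $c\,|V(G)|-d$ edges contains either a $K_9^{\vee}$- or a $K_9^{=}$-minor, up to a small list of exceptional graphs (one expects this family to include $K_{2,2,2,2,2}$ and related complete multipartite graphs). Because both $K_9^{\vee}$ and $K_9^{=}$ are being excluded jointly, the analysis is strictly easier than excluding only one of them, and one can reuse the rooted-minor infrastructure developed in \cref{s:rooted}, suitably scaled to rooted $K_6^-$-models and rooted $K^*_{6,2}$-models on pairs $(G,Z)$ with $|Z|=6$. Combined with contraction-criticality, which already gives minimum degree $9$, the extremal bound would then force $G$ to contain many vertices whose degree equals $9$ (or is very close to it), in analogy with \cref{c:degree7vtx}.

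The second main step would be to exploit the small-independent-set property of the neighborhoods of these low-degree vertices to locate a $7$-clique inside each of them, mirroring \cref{c:nbrhood}. For most configurations of the complement of $G[N(v)]$, a Ramsey-type case analysis suffices; for exceptional configurations (analogous to the Moser-spindle case in the proof of \cref{c:nbrhood}), I would invoke the Kempe-chain-to-cycle-minor conversion via \cref{t:KM2019} to directly produce the forbidden minor, thereby reducing to the case where $7$-cliques do exist. Finally, combining three such $7$-cliques via the Kawarabayashi--Luo--Niu--Zhang theorem (\cref{t:KLNZ}) applied with $k=7$ would yield a $K_9$-minor in favorable cases, while a careful analysis of pairwise clique intersections in the style of Claims~\ref{c:clique1}--\ref{c:clique4} would handle the remaining cases, typically by routing short paths through the high-connectivity of $G$ to complete the forbidden minor.

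The main obstacle is the extremal step: the proof of \cref{t:extremal} required substantial new rooted-minor machinery, and pushing through an analogous analysis at the $K_9$-level will likely require further refinements of J{\o}rgensen-style arguments, especially to obtain sharp bounds for rooted $K^*_{6,2}$-models and rooted $K_6^-$-models inside $5$- or $6$-connected pairs $(G,Z)$. A secondary obstacle is the neighborhood case analysis, which becomes substantially more combinatorial at minimum degree $9$, since the complement of $G[N(v)]$ can now have maximum degree as large as $2$ on many more vertices and a much richer structure than the almost-matching situation encountered in the $\deg(v)\in\{6,7\}$ cases handled in \cref{s:main}.
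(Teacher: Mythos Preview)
The paper does not prove this theorem. It is stated in \cref{s:remarks} purely as a citation of an existing result due to Rolek~\cite{Rolek2020}, alongside several other known results of a similar flavor; no proof or proof sketch is given. There is therefore no ``paper's own proof'' to compare your proposal against.

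Beyond that, your proposal is a research outline rather than a proof, and its strategy is somewhat miscalibrated. The whole point of the new extremal machinery in this paper (\cref{t:extremal} and the rooted-minor lemmas of \cref{s:rooted}) is that excluding \emph{only} $K_7^{\vee}$ is substantially harder than excluding both $K_7^{\vee}$ and $K_7^{=}$ simultaneously: Jakobsen's \cref{t:Jakobsen} already handled the joint exclusion decades ago without any of this apparatus. Rolek's theorem is the $t=9$ analogue of Jakobsen's joint-exclusion result, not of \cref{t:main}, so one would expect its proof to rely on an already-available Mader-type extremal bound for graphs with no $(K_9$ minus two edges$)$-minor, together with Dirac's condition and a clique-packing argument in the style of \cref{t:KLNZ}, rather than on a new $K^*_{6,2}$-rooted-model theory. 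Your acknowledgment that ``the analysis is strictly easier'' when both are excluded is correct, but the conclusion should be that the heavy extremal step you flag as the ``main obstacle'' is not actually needed here.
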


\bibliographystyle{alpha}
 \bibliography{references}

\end{document}